\def\section{\@startsection{section}{1}%
	\z@{.7\linespacing\@plus\linespacing}{.5\linespacing}%
	{\bfseries
		\centering
}}
\def\@secnumfont{\bfseries}
\colorlet{darkblue}{blue!50!black}
\newtheorem{theorem}{Theorem}[section]
\newtheorem{lemma}[theorem]{Lemma}
\newtheorem{remark}[theorem]{Remark}
\def\L{\mathrm{L}}
\def\X{\mathrm{X}}
\def\C{\mathrm{C}}
\def\D{\mathrm{D}}
\def\Y{\mathrm{Y}}
\def\Z{\mathrm{Z}}
\def\W{\mathrm{W}}
\def\N{\mathbb{N}}
\def\I{\mathrm{I}}
\def\S{\mathrm{S}}
\def\H{\mathrm{H}}
\newcommand{\R}{\mathbb{R}}
\DeclareMathOperator*{\esssup}{ess\,sup}
\let\originalleft\left
\let\originalright\right
\renewcommand{\left}{\mathopen{}\mathclose\bgroup\originalleft}
\renewcommand{\right}{\aftergroup\egroup\originalright}
\newcommand{\vertiii}[1]{{\left\vert\kern-0.25ex\left\vert\kern-0.25ex\left\vert #1 
		\right\vert\kern-0.25ex\right\vert\kern-0.25ex\right\vert}}
\newcommand{\Addresses}{{
		\footnote{
			
			\noindent \textsuperscript{1,2}Department of Mathematics, Indian Institute of Technology Roorkee-IIT Roorkee,
			Haridwar Highway, Roorkee, Uttarakhand 247667, INDIA.\par\nopagebreak
			\noindent  \textit{e-mail:} \texttt{Manil T. Mohan: maniltmohan@ma.iitr.ac.in, maniltmohan@gmail.com.}
			
			\textit{e-mail:} \texttt{Shri Lal Raghudev Ram Singh: raghudevram$\_$s@ma.iitr.ac.in.}
			
			\noindent \textsuperscript{*}Corresponding author.

			\textit{Key words:} generalized Burgers-Huxley equation, stabilization, boundary control.
			
			Mathematics Subject Classification (2020): 93D15, 93D23, 34H05, 35K51.

	}}}
\begin{document}

	\title[Boundary control of generalized Burgers-Huxley equation]{Analysis and Numerical Study of Boundary control of generalized Burgers-Huxley equation\Addresses}
	\author[S. L. Raghudev Ram Singh and M. T. Mohan]
	{Shri Lal Raghudev Ram Singh\textsuperscript{1} and Manil T. Mohan\textsuperscript{2*}}

	\maketitle
	
	\begin{abstract}
		In this work, a boundary control problem for  the following generalized Burgers-Huxley (GBH) equation: $$u_t=\nu u_{xx}-\alpha u^{\delta}u_x+\beta u(1-u^{\delta})(u^{\delta}-\gamma), $$ where $\nu,\alpha,\beta>0,$ $1\leq\delta<\infty$, $\gamma\in(0,1)$  subject to Neumann boundary conditions is analyzed. Using  the Minty-Browder theorem,  standard elliptic partial differential equations theory, the maximum principle and the Crandall-Liggett theorem, we first address the global existence of a unique strong solution to GBH equation.   Then, for the boundary control problem, we prove that the controlled  GBH equation (that is, the closed loop system) is exponentially stable in the $\H^1$-norm (hence pointwise) when the viscosity $\nu$ is known (non-adaptive control). Moreover, we show that a damped version of GBH equation is globally asymptotically stable (in the $\L^2$-norm), when $\nu$ is unknown (adaptive control). Using the Chebychev collocation method with the backward Euler method as a temporal scheme, numerical findings are reported for both the non-adaptive and adaptive situations, supporting and confirming the analytical results of both the controlled and uncontrolled systems.
	\end{abstract}

	\section{Introduction}\label{sec1}\setcounter{equation}{0}  
	For the past few decades, a good amount of research work has been carried out in the control of fluid flows (cf. \cite{FART,MDG,AVF,MK1,HLPH,SSS}, etc.). Fluid flow separation, combustion, and fluid-structure interactions are only a few of the numerous applications that drove the adoption of control theory.  Burgers' equation is a fundamental partial differential equation (PDE) and a basic convection-diffusion equation which is used in many applied mathematics fields, including gas dynamics, fluid mechanics, nonlinear acoustics, and traffic flow.  The control of Burgers equation, which is also considered as a ``toy model'' of Navier-Stokes (NS) equations, has been thoroughly studied in the works \cite{BAKK,ABMK,CIB,BBK,MK,MK2,KKSV,JML,WJJ,IM,NS,NS1}, etc. The 1D Burgers equation received a lot of interest from the control and mathematical communities as NS equations control problems are difficult  to solve numerically. For numerical studies on the stabilization of viscous Burgers equation, the interested readers are referred to see \cite{SKAK,SKAK1,NS}, etc.	The control problems for stochastic Burgers equation have been explored in the works \cite{HCRT,GDAD,GDAD1,MK2}, etc. 
	
	The boundary control (stabilization) of  viscous as well as  inviscid Burgers equation, using both Neumann and Dirichlet boundary control, is investigated  in \cite{MK,MK2}. Under the proposed nonlinear boundary conditions in \cite{MK,MK2}, the global asymptotic stabilization and semiglobal exponential stabilization in the $\H^1$-sense of viscous Burgers equations is examined in \cite{ABMK}. The adaptive and non-adaptive stabilization of  generalized Burgers equation by nonlinear boundary control is analyzed in the work \cite{NS1}. The boundary stabilization of Burgers equation is considered in \cite{MA} and the author proved that the closed‐loop system is globally $\H^1$- and $\H^3$-stable.  The dynamics of the forced Burgers equation subject to both Neumann boundary conditions and periodic boundary conditions using boundary and distributed control is analyzed in the work \cite{NS}. A simple, finite-dimensional, linear deterministic stabilizing boundary feedback law for the stochastic Burgers equation with unbounded time-dependent coefficients is designed in the work \cite{IM}. Explicit time-varying feedback laws leading to the global (null)	stabilization in small time of the viscous Burgers equation with three scalar controls is constructed in \cite{JMC}. 
	
	\subsection{The model}
	The \emph{generalized Burgers-Huxley (GBH) equation}  describes a prototype model for describing the interaction between reaction mechanisms, convection effects and diffusion transports.  For the boundary control problem (stabilization), we  investigate the following GBH (reaction-diffusion-convection) equation: for $ x\in(0,1),\ t>0$,
	\begin{align}\label{1}
		u_t(x,t)=\nu u_{xx}(x,t)-\alpha u^{\delta}(x,t)u_x(x,t)+\beta u(x,t)(1-u^{\delta}(x,t))(u^{\delta}(x,t)-\gamma), 
	\end{align}
	with $u(x,0)=u_0(x),$	using the following nonlinear Neumann boundary control:
	\begin{align}\label{2}
		u_x(0,t)=w_1(t)\ \text{ and }\ u_x(1,t)=w_2(t). 
	\end{align}
		In \eqref{1}, $\alpha>0$ is the advection coefficient, $\nu,\beta>0,$ $1\leq\delta<\infty$ and $\gamma\in(0,1)$ are parameters. The model \eqref{1} with $\alpha=\delta=1$ and $\beta=0$ is the classical Burgers equation and the case  $\delta=1$ is known as the Burgers-Huxley equation. The global solvability (the existence and uniqueness of global weak as well as strong solution) of the GBH equation \eqref{1} with homogeneous Dirichlet's boundary condition is established in \cite{MTAK}. The global solvability of multidimensional stationary as well as non-stationary GBH equations and their  numerical studies are carried out in \cite{AKMTRB,SMAK,SMAK1}, etc.  Stochastic versions of 1D GBH equation and their qualitative properties  have been examined in the works \cite{AKMTM,AKMTM1,AKMTM2,MTM,MTM1}, etc.  

\subsection{Approaches and novelties} 
We first show the well-posedness of the problem \eqref{1}-\eqref{2} when the boundary data is in the feedback form. 	We follow the works \cite{ABMK,KIYY}, etc. to obtain the required results. Our initial method for the global solvability is based on the maximal principle and the semigroup approach.  The existence of strong solutions is established by showing that the sum of  linear and nonlinear operators appearing in the GBH equation \eqref{1} is \emph{strongly monotone, hemicontinuous and coercive}. We demonstrate  the existence of strong solutions with the aid of the \emph{bijectivity of the operator (Minty-Browder theorem), the standard results from the theory of elliptic PDE,  the maximum principle, and the Crandall-Liggett Theorem}. For the classical solutions, we directly borrow  the results from \cite{OAVA} and employ it to the problem \eqref{1}-\eqref{2}. 

For the stabilization of the GBH equation \eqref{1}, the  non-adaptive ($\nu$ is known) and adaptive ($\nu$ is unknown for a damped version) cases are analyzed in this work. The major difference between adaptive and non-adaptive control is that in adaptive control, good control performance can be directly achieved even in the presence of undesirable or unpredictable disturbances (\cite{NS1}).  We mainly follow the works \cite{MK,NS,NS1}, etc. to reach our major goals.  For the boundary control problem, when the viscosity $\nu$ is known (non-adaptive control), we prove that the controlled  GBH equation (that is, the closed loop system) is exponentially stable not only in the $\L^2$-norm but also in the $\H^1$-norm. With the help of Agmon's inequality, we also obtain the pointwise exponential stability.  

Numerical results are provided for both the adaptive and non-adaptive cases using the Chebychev collocation method with the backward Euler method as a temporal scheme, validating and supporting the analytical results of both the controlled and uncontrolled systems.  	To the best of authors' knowledge, this work appears to be the first one which discusses the stabilization (by boundary control) of GBH equation.  Indeed, this paper's results also hold true for Burger's equation, improving the findings from \cite{ABMK,MK}, etc. 
	
\subsection{Organization of the paper}	
The rest of the paper is organized as follows: In the next section, we consider the GBH equation \eqref{1} subject to \eqref{2} in the feedback form. A discussion on the existence and uniqueness of classical solution is provided in subsection \ref{cls} (Theorem \ref{thm2.2}).  We then establish the global solvability results (Theorem \ref{thm2.6}) by applying   the Minty-Browder theorem (Lemma \ref{lem2.1}),  standard elliptic PDE theory (Lemma \ref{lem2.3}), the maximum principle (Lemma \ref{lem2.6}) and the Crandall-Liggett theorem (Theorem \ref{thm25}).   In section \ref{sec3}, we first show the $\L^2$-exponential stability for the system  \eqref{1}-\eqref{2} under the control law \eqref{eq6} (non-adaptive control), when the viscosity $\nu>\frac{\beta}{2}(1-\gamma)^2$ is known (Theorem \ref{thm2.3}). Agmon's (Lemma \ref{lem21}) and Poincar\'e (Lemma \ref{lem3.1}) inequalities play a crucial role in our analysis.  A discussion on the asymptotic stability of forced GBH equation and Dirichlet's boundary control of unforced GBH equation is presented  in  Theorem \ref{rem2.4} and  Remark \ref{rem2.5}, respectively.  Theorem \ref{thm3.6} yields the $\L^q$-norm exponential stability results for arbitrary $q\in[2,\infty)$. This result  and Agmon's inequality   lead to the global exponential stability in the $\H^1$-norm and pointwise sense, (Theorems \ref{thm3.8} and  \ref{thm3.10}).  Then, we consider the adaptive control problem (that is, when $\nu>0$ is unknown) for a damped version of the system \eqref{1}-\eqref{2}  (see \eqref{1d} below, where $\kappa\leq \frac{\beta}{4}(1-\gamma)^2$ is known) and prove the asymptotic stability under the control law \eqref{29} (Theorem \ref{thm2.5}). In the final section, numerical studies are carried out to validate the theoretical findings obtained in the paper.

\section{Well-posedness}\label{sec2}\setcounter{equation}{0}  
In this section, we discuss the existence and uniqueness of solutions of \eqref{1} subject to \eqref{2} given in the feedback form  \eqref{eq6} or \eqref{2p10}. Similar results for Burgers equation and viscous scalar conservation law are obtained in \cite{ABMK,KIYY}, respectively. The following Agmon's inequality is used in the sequel (see \cite[Lemma A.2]{MK}). 
\begin{lemma}[Agmon's inequality]\label{lem21}
	For any $w\in\C([0,1])$, the following inequality holds: 
	\begin{align}\label{21}
		\max_{x\in[0,1]}|w(x)|\leq C_a\|w\|_{\L^2}^{1/2}\|w\|_{\H^1}^{1/2},
	\end{align}
	where $C_a=2^{5/4}$. 
	\end{lemma}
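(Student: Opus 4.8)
The plan is to prove Agmon's inequality
\begin{align}\label{agmon-goal}
	\max_{x\in[0,1]}|w(x)|\leq C_a\|w\|_{\L^2}^{1/2}\|w\|_{\H^1}^{1/2}
\end{align}
by a density-plus-fundamental-theorem-of-calculus argument. First I would reduce to the case of smooth (or at least $\C^1$) functions: since $\C^\infty([0,1])$ is dense in $\H^1(0,1)$ and $\H^1(0,1)\hookrightarrow \C([0,1])$ continuously in one dimension, it suffices to establish \eqref{agmon-goal} for $w\in\C^1([0,1])$ and then pass to the limit, both sides being continuous with respect to the $\H^1$-norm. So assume $w\in\C^1([0,1])$.

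The core of the argument is to control the square of the maximum. Let $x_0\in[0,1]$ be a point where $|w|$ attains its minimum over $[0,1]$; by the mean value / integral estimate there is a point with $|w(x_0)|^2\leq \int_0^1 |w(x)|^2\d x=\|w\|_{\L^2}^2$. For an arbitrary $y\in[0,1]$, the fundamental theorem of calculus gives
\begin{align}\label{agmon-ftc}
	w(y)^2-w(x_0)^2=\int_{x_0}^{y}\frac{\d}{\d x}\bigl(w(x)^2\bigr)\d x=2\int_{x_0}^{y}w(x)\,w'(x)\d x.
\end{align}
Taking absolute values, extending the interval of integration to all of $[0,1]$, and applying the Cauchy-Schwarz inequality to $\int_0^1|w||w'|\d x\leq\|w\|_{\L^2}\|w'\|_{\L^2}$, I obtain
\begin{align}\label{agmon-bound}
	w(y)^2\leq \|w\|_{\L^2}^2+2\|w\|_{\L^2}\|w'\|_{\L^2}.
\end{align}
Since this holds for every $y$, the left-hand side may be replaced by $\max_{x\in[0,1]}|w(x)|^2$.

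It remains to collapse the right-hand side of \eqref{agmon-bound} into the stated form. The plan is to bound both $\|w\|_{\L^2}^2$ and $2\|w\|_{\L^2}\|w'\|_{\L^2}$ by a constant multiple of $\|w\|_{\L^2}\|w\|_{\H^1}$, using $\|w\|_{\L^2}\leq\|w\|_{\H^1}$ and $\|w'\|_{\L^2}\leq\|w\|_{\H^1}$, which yields $\max_{x}|w(x)|^2\leq 3\|w\|_{\L^2}\|w\|_{\H^1}$ and hence a constant of $\sqrt{3}$; to recover the sharper stated value $C_a=2^{5/4}$ one instead keeps \eqref{agmon-bound} as is and optimizes, noting $\|w\|_{\L^2}^2+2\|w\|_{\L^2}\|w'\|_{\L^2}\leq 2\|w\|_{\L^2}\bigl(\|w\|_{\L^2}+\|w'\|_{\L^2}\bigr)$ and comparing with the equivalent $\H^1$-norm. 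The only genuinely delicate point I anticipate is matching the precise constant $2^{5/4}$, which depends on the exact normalization chosen for $\|\cdot\|_{\H^1}$ (whether it is $\|w\|_{\L^2}^2+\|w'\|_{\L^2}^2$ or $\|w\|_{\L^2}+\|w'\|_{\L^2}$, and whether the inner-product or the sum convention is used); the structural inequality itself is routine, so I would simply cite \cite[Lemma A.2]{MK} for the sharp constant as the statement already does, and present the elementary derivation above as the justification of the inequality up to the universal constant.
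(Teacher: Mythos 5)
Your argument is correct, and it rests on the same two ingredients as the paper's proof --- the identity $w(y)^2=w(x_0)^2+2\int_{x_0}^{y}w\,w_x\,dx$ together with the Cauchy--Schwarz inequality --- but your choice of anchor point $x_0$ is genuinely different and actually buys you a better constant. The paper anchors at the boundary point $x_0=0$ and must then spend a second step bounding $w^2(0)$, by integrating the reversed identity over $[0,1]$; this doubles the cross term and leads to $\max_{x\in[0,1]}|w(x)|^2\leq\|w\|_{\L^2}\left(\|w\|_{\L^2}+4\|w_x\|_{\L^2}\right)$ (the display \eqref{24}), whence $C_a=(4\sqrt{2})^{1/2}=2^{5/4}$. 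You anchor instead at a minimum point of $|w|$, where $w(x_0)^2\leq\|w\|_{\L^2}^2$ comes for free, and obtain the sharper intermediate bound $\max_{x\in[0,1]}|w(x)|^2\leq\|w\|_{\L^2}^2+2\|w\|_{\L^2}\|w_x\|_{\L^2}$. Your only real weakness is the hedging in the final paragraph: nothing delicate remains, and there is no need to fall back on citing \cite{MK} for the constant. The paper's $\H^1$-norm is the quadratic one, $\|w\|_{\H^1}^2=\|w\|_{\L^2}^2+\|w_x\|_{\L^2}^2$ (this is exactly what makes the last inequality in \eqref{24} work), and with that convention your bound gives $\max_{x\in[0,1]}|w(x)|^2\leq 3\|w\|_{\L^2}\|w\|_{\H^1}$, i.e.\ the constant $\sqrt{3}$ --- or even $5^{1/4}$ if you estimate $\|w\|_{\L^2}+2\|w_x\|_{\L^2}\leq\sqrt{5}\,\|w\|_{\H^1}$ by Cauchy--Schwarz --- and both are below $2^{5/4}$, so the stated inequality follows a fortiori. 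The preliminary density reduction to $\C^1$ functions is sound and is needed for the FTC step; for $w\in\C([0,1])$ not in $\H^1(0,1)$ the inequality is vacuous because the right-hand side is infinite.
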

	\begin{proof}
		An application of H\"older's inequality yields 
		\begin{align}\label{22}
			w^2(x)&=w^2(0)+2\int_0^xw(\zeta)w_x(\zeta)d\zeta\leq w^2(0)+2\left(\int_0^1|w(\zeta)|^2d\zeta\right)^{1/2}\left(\int_0^1|w_x(\zeta)|^2d\zeta\right)^{1/2},
		\end{align}
		for all $x\in[0,1]$. Note that 
		\begin{align*}
			|w^2(0)|&=\left|w^2(x)-2\int_0^xw(\zeta)w_x(\zeta)d\zeta\leq w^2(0)\right|\leq|w(x)|^2+2\|w\|_{\L^2}\|w_x\|_{\L^2}.
		\end{align*}
		Integrating the above inequality from $0$ to $1$, we deduce 
		\begin{align}
			|w^2(0)|\leq \|w\|_{\L^2}(\|w\|_{\L^2}+2\|w_x\|_{\L^2}).
		\end{align}
		Therefore, from \eqref{22}, we infer 
		\begin{align}\label{24}
			\max\limits_{x\in[0,1]}|w(x)|^2\leq \|w\|_{\L^2}(\|w\|_{\L^2}+4\|w_x\|_{\L^2})\leq 4\sqrt{2}\|w\|_{\L^2}\left(\|w\|_{\L^2}^2+\|w_x\|_{\L^2}^2\right)^{1/2},
		\end{align}
		and \eqref{21} follows. 
	\end{proof}
	\begin{remark}\label{rem2.2}
		From the inequality \eqref{21}, it is also clear that $\max\limits_{x\in[0,1]}|w(x)|\leq C_1\|w\|_{\H^1}$, for all $w\in\H^1(0,1)$, so that $\H^1(0,1)\hookrightarrow\C([0,1])$. In fact, $\mathrm{W}^{1,p}(0,1)\hookrightarrow\C([0,1])$ for any $1\leq p<\infty$ and the embedding is compact (\cite[Example 6, page 172]{DMDZ}). 
	\end{remark}

\subsection{Strong solutions}
The existence of a strong  solutions can be proved by following the method used in \cite[Theorem 2.2]{KIYY}. For  Dirichlet's boundary conditions, similar methods have been employed in \cite[Proposition 3.5]{AKMTM1} to obtain the existence of mild solutions of \eqref{1}. We consider  \eqref{1} subject to the following boundary conditions:
\begin{equation}\label{2p6}
	\left\{
\begin{aligned}
	u_x(0,t)&=g_0(u(0,t)),\\
	u_x(1,t)&=-g_1(u(1,t)), 
	\end{aligned}
	\right.
	\end{equation}
	where $g_0,g_1\in \C(\R)$ are nondecreasing functions with $g_0(0)=g_1(0)=0$	representing the nonlinear flux feedback controls. Let us define a nonlinear map $\mathscr{A}:(\D(\mathscr{A})\subset\L^2(0,1))\to \L^2(0,1)$ by 
\begin{align*}
	\mathscr{A}(v)(x):=-\nu v_{xx}(x)+\frac{\alpha}{\delta+1}( v^{\delta+1})_x-\beta v(x)(1-v^{\delta}(x))(v^{\delta}(x)-\gamma)
\end{align*}
with the domain 
\begin{align*}
	\D(\mathscr{A}):=\left\{v\in \H^2(0,1):v_x(0)=g_0(v(0))\ \text{ and }\ v_x(1)=-g_1(v(1))\right\}.
\end{align*}
Performing an integration by parts,  for each $v\in\D(\mathscr{A})$, we have
\begin{align}\label{2.6}
	(\mathscr{A}(v),w)&=\nu(v_x,w_x)+\nu g_0(v(0))w(0)+\nu g_1(v(1))w(1)\nonumber\\&\quad+\frac{\alpha}{\delta+1}\left[v^{\delta+1}(1)w(1)-v^{\delta+1}(0)w(0)-(v^{\delta+1},w_x)\right]\nonumber\\&\quad-\beta (v(1-v^{\delta})(v^{\delta}-\gamma),w),
\end{align}
for all $w\in \H^1(0,1)=:\mathrm{X}$. We apply the well known \emph{Crandall-Liggett theorem} to obtain the existence of a contraction $\C_0$-semigroup for the problem \eqref{1} along with the boundary condition \eqref{2p6}. Due  to the lack of global Lipschitz continuity, to obtain monotonicity of the nonlinear map $\mathscr{A}$, we introduce a cut-off function for the function $f(v)=\frac{1}{\delta+1}v^{\delta+1}$. In fact, the following results are true for any $f\in \C^1(\R)$ and we prove it in general.  For $\rho\geq 1$, we define 
\begin{align*}
	f_{\rho}(y):=\left\{\begin{array}{ll}
		f(\rho)+f'(\rho)(y-\rho)&\ \text{ if }\ y>\rho,\\ 
		f(y)&\ \text{ if }\ |y|\leq\rho,\\
		f(-\rho)+f'(-\rho)(y+\rho)&\ \text{ if }\ y<-\rho. 
		\end{array}\right.
\end{align*}
Then one can show that $f_{\rho}\in\C^1(\R)$ is globally Lipschitz continuous with the Lipschitz constant, $$L_{\rho}=\sup_{x\in\R}|f_{\rho}'(x)|=\sup_{|x|\leq\rho}|f'(x)|.$$  Let us define the corresponding nonlinear map $\mathscr{A}_{\rho}:\mathrm{X}\to\mathrm{X}^*$ by 
\begin{align}\label{25}
	\langle\mathscr{A}_{\rho}(v),w\rangle &=\nu(v_x,w_x)+\nu g_0(v(0))w(0)+\nu g_1(v(1))w(1)\nonumber\\&\quad+\alpha\left[f_{\rho}(v(1))w(1)-f_{\rho}(v(0))w(0)-(f_{\rho}(v),w_x)\right]\nonumber\\&\quad-\beta (v(1-v^{\delta})(v^{\delta}-\gamma),w),
\end{align} 
for all $w\in\mathrm{X}$. Therefore, for each $w\in\X$, we have 
\begin{align}\label{26}
	|\langle\mathscr{A}_{\rho}(v),w\rangle |&\leq C \big[\nu(\|v_x\|_{\mathrm{L}^2}+|g_0(v(0))|+|g_1(v(0))|)\nonumber\\&\quad+\alpha(\|f_{\rho}(v)\|_{\L^2}+|f_{\rho}(v(0))|+|f_{\rho}(v(1))|)\nonumber\\&\quad+\beta((1+\gamma)\|v\|_{\L^{2(\delta+1)}}^{\delta+1}+\gamma\|v\|_{\L^2}+\|v\|_{\L^{2(\delta+1)}}^{2\delta+1})\big]\|w\|_{\H^1},
\end{align}
where we have used the continuous Sobolev embedding  $\H^1(0,1)\hookrightarrow\C([0,1])\hookrightarrow\L^p(0,1)$ for all $1\leq p<\infty$. From \eqref{26}, we infer that the operator $\mathscr{A}_{\rho}(v)$ is a bounded linear functional on $\X$ for each $v\in\X$. 
\begin{lemma}\label{lem2.1}
	Let $g_0,g_1\in\C(\R)$ be non-decreasing functions, $f\in\C^1(\R)$, $\nu, \alpha,\beta,\rho>0$, $\delta\geq 1$, $\gamma\in(0,1)$ be constants, and $\mathscr{A}_{\rho}$ be defined by \eqref{25}. Then for all $\omega\geq\omega_{\rho}$, with $\omega_{\rho}$ defined by 
	\begin{align}\label{2.12}
		\omega_{\rho}=\left(\frac{\alpha^2L_{\rho}^2}{2\nu}+\frac{2\alpha^2L_{\rho}^2C_a^2}{\max\left\{\frac{\nu}{2},\beta\gamma\right\}}+2^{2\delta-1}\beta(1+\gamma)^2(\delta+1)^2\right),
	\end{align}
	 the nonlinear map $\mathscr{B}_{\rho,\omega}=\mathscr{A}_{\rho}+\omega\I:\X\to\X^*$ is \emph{monotone} in the sense that 
	\begin{align}
		\langle \mathscr{B}_{\rho,\omega}(v)-\mathscr{B}_{\rho,\omega}(w),v-w\rangle \geq 0\ \text{ for all }\ v,w\in \X. 
	\end{align}
	Moreover, $\mathscr{B}_{\rho,\omega}$ is \emph{hemicontinuous} in the sense that for all $v_1,v_2,w\in\X$, $$\lim\limits_{\lambda\to 0}\langle\mathscr{B}_{\rho,\omega}(v_1+\lambda v_2),w\rangle=\langle\mathscr{B}_{\rho,\omega}(v_1),w\rangle.$$ Finally, $\mathscr{B}_{\rho,\omega}$ is \emph{coercive} in the sense that 
	$$\lim\limits_{\|v\|_{\H^1\to\infty}}\frac{\langle\mathscr{B}_{\rho,\omega}(v),v\rangle}{\|v\|_{\H^1}}=\infty.$$ 
\end{lemma}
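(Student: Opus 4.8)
The plan is to establish the three properties separately, with monotonicity being the heart of the matter. I would begin by forming the pairing $\langle\mathscr{B}_{\rho,\omega}(v)-\mathscr{B}_{\rho,\omega}(w),v-w\rangle$ and expanding it using the definition \eqref{25}, grouping the terms by their origin: the viscosity term $\nu\|v_x-w_x\|_{\L^2}^2$, the boundary feedback terms involving $g_0,g_1$, the cut-off flux terms involving $f_\rho$, the reaction terms involving $\beta$, and the $\omega\I$ contribution $\omega\|v-w\|_{\L^2}^2$. The first observation is that the boundary feedback terms are automatically nonnegative: since $g_0,g_1$ are nondecreasing, $(g_0(v(0))-g_0(w(0)))(v(0)-w(0))\geq 0$ and similarly at $x=1$, so these terms can simply be discarded. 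Thus the whole burden falls on controlling the (potentially negative) flux and reaction terms by the positive contributions $\nu\|v_x-w_x\|_{\L^2}^2+\omega\|v-w\|_{\L^2}^2$.

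For the flux terms, I would write the interior piece $-\alpha(f_\rho(v)-f_\rho(w),v_x-w_x)$ and the boundary pieces, then use the Lipschitz bound $|f_\rho(v)-f_\rho(w)|\leq L_\rho|v-w|$ together with Young's inequality to split off a term absorbed into $\frac{\nu}{2}\|v_x-w_x\|_{\L^2}^2$, leaving a remainder of the form $\frac{\alpha^2 L_\rho^2}{2\nu}\|v-w\|_{\L^2}^2$ — this is precisely the first term appearing in $\omega_\rho$. The boundary flux differences evaluated at the endpoints must be handled via Agmon's inequality (Lemma \ref{lem21}), writing $|v(i)-w(i)|^2\leq C_a^2\|v-w\|_{\L^2}\|v-w\|_{\H^1}$ and again applying Young's inequality to split the contribution between the $\H^1$-seminorm and the $\L^2$-norm; this produces the second term $\frac{2\alpha^2 L_\rho^2 C_a^2}{\max\{\nu/2,\beta\gamma\}}$ in $\omega_\rho$, where the denominator reflects which positive quantity absorbs the $\H^1$-part. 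The reaction term $-\beta(v(1-v^\delta)(v^\delta-\gamma)-w(1-w^\delta)(w^\delta-\gamma),v-w)$ is the most delicate: I expect the cubic-in-$u^\delta$ structure to yield one genuinely good (dissipative) sign and some indefinite polynomial cross terms, which I would bound by expanding the polynomial difference and using the elementary inequality $|a^k-b^k|\leq k(|a|^{k-1}+|b|^{k-1})|a-b|$, then pointwise estimates; the constant $2^{2\delta-1}\beta(1+\gamma)^2(\delta+1)^2$ suggests collecting the worst monomial coefficients, and I anticipate the $\beta\gamma$ appearing in the Agmon denominator comes from retaining the favorable quadratic $-\beta\gamma(v-w)^2$ contribution rather than discarding it.

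The hemicontinuity step should be routine: fixing $v_1,v_2,w$ and examining $\lambda\mapsto\langle\mathscr{B}_{\rho,\omega}(v_1+\lambda v_2),w\rangle$, each summand in \eqref{25} is a continuous (indeed polynomial, or continuous via $g_0,g_1,f_\rho$) function of $\lambda$ because $v_1+\lambda v_2\to v_1$ in $\H^1$ as $\lambda\to0$, and the embedding $\H^1(0,1)\hookrightarrow\C([0,1])$ from Remark \ref{rem2.2} guarantees the pointwise and $\L^p$ convergences needed; dominated convergence then passes the limit inside. For coercivity I would test with $w=v$ in \eqref{25} and add $\omega\|v\|_{\L^2}^2$: the leading positive term is $\nu\|v_x\|_{\L^2}^2$, the feedback terms $\nu g_0(v(0))v(0)+\nu g_1(v(1))v(1)\geq0$ by monotonicity with $g_i(0)=0$, and the linear growth of $f_\rho$ (bounded by $L_\rho$) means the flux contributions are at most quadratic in $\|v\|_{\H^1}$, while the reaction term grows polynomially but is again controlled after choosing $\omega\geq\omega_\rho$; dividing by $\|v\|_{\H^1}$ and letting $\|v\|_{\H^1}\to\infty$ forces the ratio to infinity, driven by the $\nu\|v_x\|_{\L^2}^2+\omega\|v\|_{\L^2}^2$ coercive pair. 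The main obstacle will be the reaction-term estimate in monotonicity: tracking the precise polynomial algebra of $v(1-v^\delta)(v^\delta-\gamma)$ for general $\delta\geq1$ so that the cross terms are dominated with exactly the stated constant, while simultaneously keeping the beneficial $-\beta\gamma\|v-w\|_{\L^2}^2$ term intact to feed the Agmon estimate, is where the bookkeeping is genuinely nontrivial.
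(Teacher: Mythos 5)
Your proposal follows essentially the same route as the paper's proof: the identical term-by-term decomposition for monotonicity --- discarding the nondecreasing feedback terms, Lipschitz-plus-Young on the interior flux to produce $\frac{\alpha^2L_\rho^2}{2\nu}$, Agmon-plus-Young on the boundary flux terms to produce the second term of $\omega_\rho$, and splitting the reaction term into the dissipative part $\beta(v^{2\delta+1}-w^{2\delta+1},v-w)\geq\frac{\beta}{2}\|v^{\delta}(v-w)\|_{\L^2}^2+\frac{\beta}{2}\|w^{\delta}(v-w)\|_{\L^2}^2$, which then absorbs the Taylor/mean-value bound on $(v^{\delta+1}-w^{\delta+1},v-w)$ leaving exactly $2^{2\delta-1}\beta(1+\gamma)^2(\delta+1)^2\|v-w\|_{\L^2}^2$. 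The only cosmetic differences are in the routine steps: the paper gets hemicontinuity via demicontinuity (reflexivity of $\X$) rather than your direct continuity-in-$\lambda$ argument, and it obtains coercivity for free by setting $w=0$ in the strong monotonicity estimate rather than re-estimating $\langle\mathscr{B}_{\rho,\omega}(v),v\rangle$ from scratch --- both of your variants work equally well.
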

\begin{proof}
	The proof is divided into the following steps:
	\vskip 0.1cm
	\noindent \textbf{Step 1:} \emph{Monotonicity.}
Let us first prove the monotoniciy property. For all $v,w\in\X$, since $f_{\rho}$ has the global Lipschitz constant $L_{\rho}$ and, $g_0$  and $g_1$ are nondecreasing, we have
\begin{align}\label{28}
	&\langle\mathscr{A}_{\rho}(v)-\mathscr{A}_{\rho}(w),v-w\rangle\nonumber\\&=\nu\|v_x-w_x\|_{\L^2}^2-\alpha(f_{\rho}(v)-f_{\rho}(w),v_x-w_x)\nonumber\\&\quad-\beta(1+\gamma)(v^{\delta+1}-w^{\delta+1},v-w)+\beta\gamma\|v-w\|_{\L^2}^2+\beta(v^{2\delta+1}-w^{2\delta+1},v-w)\nonumber\\&\quad+\nu (g_0(v(0))-g_0(w(0)))(v(0)-w(0))+\nu (g_1(v(0))-g_1(w(0)))(v(0)-w(0))\nonumber\\&\quad +\alpha(f_{\rho}(v(1))-f_{\rho}(w(1)))(v(1)-w(1))-\alpha(f_{\rho}(v(0))-f_{\rho}(w(0)))(v(0)-w(0))\nonumber\\&\geq \nu\|v_x-w_x\|_{\L^2}^2-\alpha L_{\rho}\|v-w\|_{\L^2}\|v_x-w_x\|_{\L^2}-2\alpha L_{\rho}\|v-w\|_{\L^{\infty}}^2+\beta\gamma\|v-w\|_{\L^2}^2\nonumber\\&\quad-\beta(1+\gamma)(v^{\delta+1}-w^{\delta+1},v-w)+\beta(v^{2\delta+1}-w^{2\delta+1},v-w). 
\end{align}
Let us now estimate the term $-\beta(v^{2\delta+1}-w^{2\delta+1},v-w)$ from \eqref{28} as
\begin{align}\label{2.9}
	&\beta(v^{2\delta+1}-w^{2\delta+1},v-w)\nonumber\\&= \beta(v^{2\delta},(v-w)^2) +\beta(w^{2\delta},(v-w)^2)+\beta(v^{2\delta}w-w^{2\delta}v,v-w)\nonumber\\&=\beta\|v^{\delta}(v-w)\|_{\L^2}^2+\beta\|w^{\delta}(v-w)\|_{\L^2}^2+\beta(vw,v^{2\delta}+w^{2\delta})-\beta(v^2,w^{2\delta})-\beta(w^2,v^{2\delta})\nonumber\\&=\frac{\beta}{2}\|v^{\delta}(v-w)\|_{\L^2}^2+\frac{\beta}{2}\|w^{\delta}(v-w)\|_{\L^2}^2+\frac{\beta}{2}(v^{2\delta}-w^{2\delta},v^2-w^2)\nonumber\\&\geq \frac{\beta}{2}\|v^{\delta}(v-w)\|_{\L^2}^2+\frac{\beta}{2}\|w^{\delta}(v-w)\|_{\L^2}^2,
\end{align}
since $(v^{2\delta}-w^{2\delta},v^2-w^2)\geq 0$. Using Taylor's formula, H\"older's and Young's inequalities, we estimate the term $\beta(1+\gamma)(v^{\delta+1}-w^{\delta+1},v-w)$ from \eqref{28} as 
\begin{align}\label{2.10}
	&\beta(1+\gamma)(v^{\delta+1}-w^{\delta+1},v-w)\nonumber\\&=\beta(1+\gamma)(\delta+1)((\theta v+(1-\theta)w)^{\delta}(v-w),v-w)\nonumber\\&\leq \beta(1+\gamma)(\delta+1)2^{\delta-1}(\|v^{\delta}(v-w)\|_{\L^2}+\|w^{\delta}(v-w)\|_{\L^2})\|v-w\|_{\L^2}\nonumber\\&\leq\frac{\beta}{4}\|v^{\delta}(v-w)\|_{\L^2}^2+\frac{\beta}{4}\|w^{\delta}(v-w)\|_{\L^2}^2+\frac{\beta}{2}2^{2\delta}(1+\gamma)^2(\delta+1)^2\|v-w\|_{\L^2}^2.
\end{align}
Combining \eqref{2.9}-\eqref{2.10} and substituting it in \eqref{28},  and then using Young's inequality and  Agmon's inequality $\|u\|_{\L^{\infty}}\leq C_a\|u\|_{\L^2}^{1/2}\|u\|_{\H^1}^{1/2}$, for all $u\in\H^1(0,1)$ (Lemma \ref{lem21}), we obtain 
\begin{align}\label{2.11}
	&\langle\mathscr{A}_{\rho}(v)-\mathscr{A}_{\rho}(w),v-w\rangle\nonumber\\&\geq \nu\|v_x-w_x\|_{\L^2}^2-\alpha L_{\rho}\|v-w\|_{\L^2}\|v_x-w_x\|_{\L^2}-2\alpha L_{\rho}\|v-w\|_{\L^{\infty}}^2+\beta\gamma\|v-w\|_{\L^2}^2\nonumber\\&\quad+\frac{\beta}{4}\|v^{\delta}(v-w)\|_{\L^2}^2+\frac{\beta}{4}\|w^{\delta}(v-w)\|_{\L^2}^2-2^{2\delta-1}\beta(1+\gamma)^2(\delta+1)^2\|v-w\|_{\L^2}^2\nonumber\\&\geq \max\left\{\frac{\nu}{2},\beta\gamma\right\}\|v-w\|_{\H^1}^2-\frac{\alpha^2L_{\rho}^2}{2\nu}\|v-w\|_{\L^2}^2-2\alpha L_{\rho}C_a\|v-w\|_{\L^2}\|v-w\|_{\H^1}\nonumber\\&\quad -2^{2\delta-1}\beta(1+\gamma)^2(\delta+1)^2\|v-w\|_{\L^2}^2\nonumber\\& \geq\frac{1}{2}\max\left\{\frac{\nu}{2},\beta\gamma\right\}\|v-w\|_{\H^1}^2-\omega_{\rho}\|v-w\|_{\L^2}^2,
\end{align}
where $\omega_{\rho}$ is defined in \eqref{2.12}. Therefore for all $\omega\geq\omega_{\rho}$, we deduce from \eqref{2.11} that 
\begin{align}\label{2.13}
	\langle\mathscr{A}_{\rho}(v)-\mathscr{A}_{\rho}(w),v-w\rangle+\omega\|v-w\|_{\L^2}^2\geq \frac{1}{2}\max\left\{\frac{\nu}{2},\beta\gamma\right\}\|u-v\|_{\H^1}^2,
\end{align}
for all $v,w\in\X$, so that the monotonicity of the operator $\mathscr{B}_{\rho,\omega}=\mathscr{A}_{\rho}+\omega\I$ follows.

\vskip 0.1cm
\noindent \textbf{Step 2:} \emph{Hemicontinuity.} Since $\X$ is a  reflexive Banach space, the demicontinuity of $\mathscr{B}_{\rho,\omega}$  implies its hemicontinuity. In order to show  the demicontinuity of the operator $\mathscr{B}_{\rho,\omega}$, it is enough to prove the demicontinuity of the operator $\mathscr{A}_{\rho}(\cdot)$ defined in \eqref{25}, that is, we need to show that if $v_n\to v$ in $\X$ implies $\mathscr{A}_{\rho}(v_n)\xrightharpoonup{w}\mathscr{A}_{\rho}(v)$ in $\X^*$ as $n\to\infty$. Moreover, it is sufficient  to consider the case $|v_n|,|v|\leq\rho$ only as the other cases follow in a similar way.   We choose a sequence $\{v_n\}_{n\in\N}\in\X$ such that $v_n\to v$ in $\X$ and $|v_n|,|v|\leq\rho$. Since $\H^1(0,1)\hookrightarrow\C([0,1])$, the convergence $v_n\to v$ in $\X$ implies $v_n(x)\to v(x)$ for all $x\in[0,1]$.   For any $w\in \X$, we consider 
\begin{align}\label{212}
	\left|	\langle \mathscr{A}(v_n)-\mathscr{A}(v),w\rangle \right|&\leq\big|\nu \big((v_{n,x}-v_x),w_x\big)\big|+\nu |(g_0(v_n(0))-g_0(v(0)))w(0)|\nonumber\\&\quad+\nu |(g_1(v_n(1))-g_1(v(1)))w(1)|+\alpha|(f(v_n)-f(v),w_x)|\nonumber\\&\quad+\alpha|(f(v_n(1))-f(v(1)))w(1)|+\alpha|(f(v_n(0))-f(v(0)))w(0)|\nonumber\\&\quad+\beta(1+\gamma)|(v_n^{\delta+1}-v^{\delta+1},w)|+\beta|(v_n^{2\delta+1}-v^{2\delta+1},w)|+\beta\gamma|(v_n-v,w)|\nonumber\\&\to 0\ \text{ as }\ n\to\infty, 
\end{align}so that the operator $\mathscr{A}:\X\to\X^*$ is demicontinuous, which implies that the operator $\mathscr{A}(\cdot)$ is hemicontinuous also.

\vskip 0.1cm
\noindent \textbf{Step 3:} \emph{Coercivity.} By taking $w=0$ in \eqref{2.11}, one can easily see that 
\begin{align}
	\frac{\langle\mathscr{B}_{\rho,\omega}(v),v\rangle}{\|v\|_{\H^1}}\geq \frac{1}{2}\max\left\{\frac{\nu}{2},\beta\gamma\right\}\|v\|_{\H^1}-\|\mathscr{A}_{\rho}(0)\|_{\X^*}\to 0\ \text{ as }\ \|v\|_{\H^1}\to 0, 
\end{align}
which completes the proof of coercivity.
\end{proof}

\begin{remark}
	 Since $\mathscr{B}_{\omega,\rho}$ is monotone, hemicontinuous and coercive from $\X$ to $\X^*$, then by an application of \cite[Example 2.3.7]{HB}, we deduce that $\mathscr{B}_{\rho,\omega}$ is maximal monotone in $\H$. 
\end{remark}

Let $\Y$ be a Banach space and $\mathrm{T}:\D(\mathrm{T})\subset\Y\to\Y^*$ is said to be \emph{strongly monotone} if there exists $\alpha>0$ such that $\langle \mathrm{T}(u)-\mathrm{T}(v),u-v\rangle\geq\alpha\|u-v\|_{\Y}^2,$ for all $u,v\in\D(\mathrm{T})$.   Before proceeding further, let us recall an important result (\cite[Theorem 3.3.1, Corollary 3.3.1]{GDJM}). 
\begin{lemma}[Minty-Browder theorem]
	Let $\Y$ be a reflexive Banach space and $\mathrm{T}:\D(\mathrm{T})\subset\Y\to\Y^*$ be a coercive hemicontinuous monotone map. Then $\mathrm{T}$ is onto. Moreover, if $\mathrm{T}$ is strongly monotone, then $\mathrm{T}$ is a bijection. 
\end{lemma}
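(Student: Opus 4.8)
The plan is to establish surjectivity by the Galerkin (finite-dimensional approximation) method combined with Minty's monotonicity trick, and then to read off injectivity directly from strong monotonicity. Throughout I take $\D(\mathrm{T})=\Y$, as in the application where $\mathscr{B}_{\rho,\omega}$ is defined on all of $\X$. Using reflexivity (and separability, which holds for $\X=\H^1(0,1)$), I would fix an increasing sequence of finite-dimensional subspaces $\Y_1\subset\Y_2\subset\cdots$ whose union is dense in $\Y$. Fix an arbitrary $f\in\Y^*$; the objective is to produce $u\in\Y$ with $\mathrm{T}(u)=f$, since $f$ is arbitrary this gives ontoness.

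First I would solve the projected problems. Choosing a basis $e_1,\dots,e_{k_n}$ of $\Y_n$, define $g_n:\R^{k_n}\to\R^{k_n}$ by $(g_n(\xi))_j=\langle\mathrm{T}(\sum_i\xi_ie_i)-f,e_j\rangle$. On the finite-dimensional space $\Y_n$ hemicontinuity upgrades to genuine continuity, so $g_n$ is continuous, while coercivity gives $\langle\mathrm{T}(v)-f,v\rangle>0$ once $\|v\|_{\Y}$ is large enough; a standard corollary of Brouwer's fixed point theorem then yields a zero of $g_n$, i.e. a $u_n\in\Y_n$ with $\langle\mathrm{T}(u_n),w\rangle=\langle f,w\rangle$ for every $w\in\Y_n$. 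Taking $w=u_n$ and invoking coercivity bounds $\|u_n\|_{\Y}$ uniformly, so reflexivity furnishes a subsequence $u_n\rightharpoonup u$ in $\Y$. Because a monotone hemicontinuous operator is bounded on bounded sets, $\{\mathrm{T}(u_n)\}$ is bounded in $\Y^*$ and, along a further subsequence, $\mathrm{T}(u_n)\rightharpoonup\chi$ in $\Y^*$; passing to the limit in the Galerkin identity with $w\in\Y_m$ fixed shows $\langle\chi,w\rangle=\langle f,w\rangle$ on the dense set $\bigcup_m\Y_m$, whence $\chi=f$, and moreover $\langle\mathrm{T}(u_n),u_n\rangle=\langle f,u_n\rangle\to\langle f,u\rangle$.

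The decisive step is Minty's trick. Monotonicity gives $\langle\mathrm{T}(u_n)-\mathrm{T}(v),u_n-v\rangle\geq0$ for every $v\in\Y$; expanding this bilinearly and sending $n\to\infty$ using the three convergences just established produces $\langle f-\mathrm{T}(v),u-v\rangle\geq0$ for all $v\in\Y$. Substituting $v=u-\lambda w$ with $\lambda>0$ and $w\in\Y$, dividing by $\lambda$, and letting $\lambda\to0^+$ (where hemicontinuity yields $\mathrm{T}(u-\lambda w)\rightharpoonup\mathrm{T}(u)$) gives $\langle f-\mathrm{T}(u),w\rangle\geq0$ for every $w$; replacing $w$ by $-w$ forces equality, so $\mathrm{T}(u)=f$ and $\mathrm{T}$ is onto. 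Finally, if $\mathrm{T}$ is strongly monotone and $\mathrm{T}(u)=\mathrm{T}(v)$, then $0=\langle\mathrm{T}(u)-\mathrm{T}(v),u-v\rangle\geq\alpha\|u-v\|_{\Y}^2$ forces $u=v$, so $\mathrm{T}$ is injective and hence bijective.

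I expect the main obstacle to be the passage to the limit in the monotonicity inequality: justifying the identification $\chi=f$ through the density argument and then carrying out the sign manipulation in Minty's trick so that weak limits combine correctly. The two supporting facts that the scheme silently relies on, namely the Brouwer-type zero-existence criterion on $\R^{k_n}$ and the local boundedness of monotone hemicontinuous operators, are where the technical weight sits; once these are in hand the remaining convergences are routine, which is why in the paper the result is simply invoked from \cite{GDJM}.
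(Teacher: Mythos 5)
Your overall strategy is the standard textbook proof of the Browder--Minty theorem (finite-dimensional Galerkin problems solved via a Brouwer-type zero lemma, uniform bounds from coercivity, weak compactness from reflexivity, and Minty's trick to identify the weak limit), and the final bijectivity claim from strong monotonicity is correct and immediate. Note for context that the paper itself does not prove this lemma at all: it simply quotes it from the reference \cite[Theorem 3.3.1, Corollary 3.3.1]{GDJM}, so you are supplying a proof where the paper supplies a citation; your route is essentially the one taken in such references.

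There is, however, a genuine flaw at exactly the step you flag as carrying the technical weight. The supporting fact you invoke --- that a monotone hemicontinuous everywhere-defined operator is bounded on bounded sets --- is \emph{false} in infinite dimensions. Counterexample: on $\ell^2$, the function $f(x)=\sum_{n\geq 1}n\,x_n^{2n}$ is everywhere finite, convex and continuous, and its Gateaux gradient $\mathrm{T}(x)=\left(2n^2x_n^{2n-1}\right)_{n\geq 1}$ is monotone and hemicontinuous, yet $\|\mathrm{T}(e_k)\|_{\ell^2}=2k^2\to\infty$ along the unit vectors $e_k$. (What is true is Rockafellar's \emph{local} boundedness, which does not globalize over bounded sets without compactness.) So your assertion that $\{\mathrm{T}(u_n)\}$ is bounded in $\Y^*$ is unjustified as written. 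The standard repair uses the Galerkin identity itself: monotonicity gives, for each fixed $v\in\Y$,
\begin{equation*}
\langle\mathrm{T}(u_n),v\rangle\;\leq\;\langle\mathrm{T}(u_n),u_n\rangle+\langle\mathrm{T}(v),v-u_n\rangle\;=\;\langle f,u_n\rangle+\langle\mathrm{T}(v),v-u_n\rangle,
\end{equation*}
whose right-hand side is bounded uniformly in $n$; applying this with $\pm v$ shows $\sup_n|\langle\mathrm{T}(u_n),v\rangle|<\infty$ for every $v$, and Banach--Steinhaus then yields $\sup_n\|\mathrm{T}(u_n)\|_{\Y^*}<\infty$, after which your argument proceeds as planned. (Alternatively one can avoid the bound entirely by testing monotonicity only against $v\in\bigcup_m\Y_m$, where $\langle\mathrm{T}(u_n),v\rangle=\langle f,v\rangle$ for large $n$, and extending by density using demicontinuity.) A smaller imprecision of the same kind: hemicontinuity alone does not upgrade to continuity on the finite-dimensional $\Y_n$ --- continuity along segments does not imply joint continuity even on $\mathbb{R}^2$; you need monotonicity there as well (monotone $+$ hemicontinuous $\Rightarrow$ locally bounded $\Rightarrow$ demicontinuous, which coincides with continuous in finite dimensions). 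Both defects are repairable by standard arguments, but as written the compactness step for $\{\mathrm{T}(u_n)\}$ rests on a false lemma.
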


From the relation \eqref{2.11}, we deduce that the operator $\mathscr{B}_{\rho,\omega}$ is strongly monotone. By applying the above theorem and the standard elliptic PDE theory (\cite[Theorems 8.8, 8.9, Corollary 8.11]{DGNS}), we immediately have  the following result: 
\begin{lemma}\label{lem2.3}
	Assume that conditions of Lemma \ref{lem2.1} hold true. Then the operator $\mathscr{B}_{\rho, \omega}:\X\to\X^*$ defined in Lemma \ref{lem2.1} is one-one and onto, that is, for each $\xi\in\X^*$, there exists a unique $\varphi\in\X$ such that $\mathscr{B}_{\rho, \omega}(\varphi)=\xi$ in the distributional sense, which means 
	\begin{align}\label{2.20}
		\langle\mathscr{B}_{\rho, \omega}(\varphi),v\rangle =\langle\xi,v\rangle, \ \text{ for all }\ v\in\X=\H_0^1(0,1).
	\end{align}
	Moreover, if $\xi\in\H=\L^2(0,1)$, then $\varphi\in\Z=\H^2(0,1)$, and it satisfies 
	\begin{equation}\label{2.21}
		\left\{
	\begin{aligned}
		\omega\varphi(x)-\nu\varphi_{xx}(x)+\alpha(f_{\rho}(\varphi(x)))_x-\beta \varphi(x)(1-\varphi^{\delta}(x))(\varphi^{\delta}(x)-\gamma)&=\xi(x),\\
		\varphi_x(0)-g_0(\varphi(0))=0,\  \varphi_x(1)+g_1(\varphi(1))&=0,
	\end{aligned}
	\right. 
	\end{equation}
	where the first equation in \eqref{2.21} holds for a.e. $x\in [0,1]$. Furthermore, if $\xi\in\C([0,1])$,  then $\varphi\in\C^2([0,1])$ and the first equation holds for every $x\in[0,1]$. 
\end{lemma}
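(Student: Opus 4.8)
The plan is to read the bijectivity off the Minty--Browder theorem and then upgrade the regularity of the resulting weak solution by a standard elliptic bootstrap, treating the semilinear reaction term and the nonlinear flux at the endpoints as data.

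First, I would record that the strong monotonicity of $\mathscr{B}_{\rho,\omega}$ is already contained in the estimate \eqref{2.11}: since $\mathscr{B}_{\rho,\omega}=\mathscr{A}_{\rho}+\omega\I$ and $\omega\geq\omega_{\rho}$, we have
\begin{align*}
	\langle\mathscr{B}_{\rho,\omega}(v)-\mathscr{B}_{\rho,\omega}(w),v-w\rangle\geq\frac{1}{2}\max\left\{\frac{\nu}{2},\beta\gamma\right\}\|v-w\|_{\H^1}^2,\quad\text{for all}\ v,w\in\X.
\end{align*}
Combined with the hemicontinuity and coercivity established in Lemma \ref{lem2.1} and the reflexivity of $\X=\H^1(0,1)$, the Minty--Browder theorem (\cite[Theorem 3.3.1, Corollary 3.3.1]{GDJM}) applies and shows that $\mathscr{B}_{\rho,\omega}:\X\to\X^*$ is a bijection. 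Hence for each $\xi\in\X^*$ there is a unique $\varphi\in\X$ with $\mathscr{B}_{\rho,\omega}(\varphi)=\xi$, which is precisely \eqref{2.20}; uniqueness is in any case immediate from the displayed strong monotonicity.

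Next, for the $\H^2$-regularity when $\xi\in\H=\L^2(0,1)$, I would first note that $\varphi\in\H^1(0,1)\hookrightarrow\C([0,1])$ (Remark \ref{rem2.2}) is bounded, so the reaction term $\beta\varphi(1-\varphi^{\delta})(\varphi^{\delta}-\gamma)$ lies in $\L^2(0,1)$, and, because $f_{\rho}$ is globally Lipschitz and $\C^1$, the chain rule gives $f_{\rho}(\varphi)\in\H^1(0,1)$ with $(f_{\rho}(\varphi))_x=f_{\rho}'(\varphi)\varphi_x\in\L^2(0,1)$. Testing the identity \eqref{2.20} against $w\in\C_c^{\infty}(0,1)$ annihilates all the boundary contributions and yields
\begin{align*}
	(\nu\varphi_x-\alpha f_{\rho}(\varphi),w_x)=\left(\xi-\omega\varphi+\beta\varphi(1-\varphi^{\delta})(\varphi^{\delta}-\gamma),\,w\right),
\end{align*}
whose right-hand side defines an $\L^2(0,1)$ function; equivalently, the distributional derivative of $\nu\varphi_x-\alpha f_{\rho}(\varphi)$ belongs to $\L^2(0,1)$, so $\nu\varphi_x-\alpha f_{\rho}(\varphi)\in\H^1(0,1)$, and since $f_{\rho}(\varphi)\in\H^1(0,1)$ we conclude $\varphi_x\in\H^1(0,1)$, i.e. $\varphi\in\Z=\H^2(0,1)$. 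This is exactly where the truncation $f_{\rho}$ is indispensable, the genuine flux $f(v)=\frac{1}{\delta+1}v^{\delta+1}$ not being globally Lipschitz; the same conclusion is furnished by the linear theory of \cite[Theorems 8.8, 8.9, Corollary 8.11]{DGNS} applied to $-\nu\partial_{xx}+\omega$ with the $\L^2$ datum $\xi-\alpha(f_{\rho}(\varphi))_x+\beta\varphi(1-\varphi^{\delta})(\varphi^{\delta}-\gamma)$ and the now affine Neumann data $\varphi_x(0)=g_0(\varphi(0))$, $\varphi_x(1)=-g_1(\varphi(1))$. Having $\varphi\in\H^2(0,1)$, I would integrate by parts back in \eqref{2.20}, first with $w\in\C_c^{\infty}(0,1)$ to recover the differential equation in \eqref{2.21} for a.e. $x$, and then with general $w\in\X$ to read off the two natural boundary conditions in \eqref{2.21}.

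Finally, for $\xi\in\C([0,1])$ I would bootstrap once more: by $\H^2(0,1)\hookrightarrow\C^1([0,1])$ both $\varphi$ and $\varphi_x$ are continuous, hence $f_{\rho}'(\varphi)\varphi_x$ (using $f_{\rho}\in\C^1$), the reaction term, and $\xi$ are all continuous, so solving the differential equation in \eqref{2.21} for $\varphi_{xx}$ shows $\varphi_{xx}\in\C([0,1])$, i.e. $\varphi\in\C^2([0,1])$, and the identity then holds for every $x\in[0,1]$ by continuity of both sides. The main obstacle is the regularity step, not the bijectivity: one must carefully verify the chain-rule membership $f_{\rho}(\varphi)\in\H^1(0,1)$ and correctly recover the nonlinear flux boundary conditions from the weak form, both of which rely on the Lipschitz truncation $f_{\rho}$ and on the one-dimensional embedding $\H^1(0,1)\hookrightarrow\C([0,1])$.
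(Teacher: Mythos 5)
Your proposal is correct and follows essentially the same route as the paper, which states this lemma without a detailed proof, deducing bijectivity from the strong monotonicity estimate \eqref{2.11} together with the Minty--Browder theorem and simply citing \cite[Theorems 8.8, 8.9, Corollary 8.11]{DGNS} for the regularity assertions. Your explicit bootstrap (chain rule for $f_{\rho}(\varphi)\in\H^1$, testing against $\C_c^{\infty}$ to get $\nu\varphi_x-\alpha f_{\rho}(\varphi)\in\H^1$, then recovering the equation a.e.\ and the natural boundary conditions by integrating back, and the final $\C^2$ step via $\H^2(0,1)\hookrightarrow\C^1([0,1])$) is a sound, fully spelled-out version of exactly the elliptic-theory step the paper leaves to the citation.
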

For $\lambda>0$ small enough, we consider the restricted nonlinear map $(\I+\lambda\mathscr{A}) \big|_{K_{\rho}}:K_{\rho}\to\H$, where $$K_{\rho}:=\left\{v\in\D(\mathscr{A}):\|v\|_{\L^{\infty}}\leq\rho\right\}.$$ Remember that $\D(\mathscr{A})\subset\H^2(0,1)\hookrightarrow\C([0,1])$ by Sobolev's embedding.

\begin{lemma}\label{lem2.6}
	Assume that conditions of Lemma \ref{lem2.1} hold true. In addition, assume that $g_i(0)=0,$ for $i=0,1$. Then for all $\lambda\in(0,1/\omega_{\rho})$, the range $\mathrm{R}\big((\I+\lambda\mathscr{A}) \big|_{K_{\rho}}\big)\supset \overline{K_{\rho}}^{\|\cdot\|_{\H}}$, where $\overline{K_{\rho}}^{\|\cdot\|_{\H}}$ is the closure of $K_{\rho}$ in $\H=\L^2(0,1)$. 
\end{lemma}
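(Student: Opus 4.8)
The plan is to recast the resolvent equation $(\I+\lambda\mathscr{A})v=\xi$ as a surjectivity statement for the cut-off operator and then to remove the cut-off by an $\L^{\infty}$-bound coming from the maximum principle. First I would note that every $\xi\in\overline{K_{\rho}}^{\|\cdot\|_{\H}}$ satisfies $\|\xi\|_{\L^{\infty}}\leq\rho$: if $\xi_n\in K_{\rho}$ with $\xi_n\to\xi$ in $\L^2$, a subsequence converges a.e., and the uniform bound $|\xi_n|\leq\rho$ passes to the limit. Next, whenever $\|v\|_{\L^{\infty}}\leq\rho$ the cut-off is inactive, so $f_{\rho}(v)=f(v)$ and $\mathscr{A}(v)=\mathscr{A}_{\rho}(v)$; hence solving $v+\lambda\mathscr{A}(v)=\xi$ is equivalent to $\mathscr{B}_{\rho,1/\lambda}(v)=\frac{1}{\lambda}\xi$, where $\mathscr{B}_{\rho,\omega}=\mathscr{A}_{\rho}+\omega\I$. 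Since $\lambda\in(0,1/\omega_{\rho})$ gives $\frac{1}{\lambda}>\omega_{\rho}$, Lemma \ref{lem2.3} applies: for $\frac{1}{\lambda}\xi\in\H$ there is a unique $\varphi\in\Z=\H^2(0,1)$ solving $\mathscr{B}_{\rho,1/\lambda}(\varphi)=\frac{1}{\lambda}\xi$ with the boundary conditions \eqref{2p6}, and $\varphi\in\C^2([0,1])$ whenever $\xi\in\C([0,1])$.

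The crux is the a priori bound $\|\varphi\|_{\L^{\infty}}\leq\rho$, which I would first establish for continuous data (so $\varphi\in\C^2$) by the classical maximum principle applied to the strong form \eqref{2.21},
$$\frac{1}{\lambda}\varphi-\nu\varphi_{xx}+\alpha(f_{\rho}(\varphi))_x-\beta\Phi(\varphi)=\frac{1}{\lambda}\xi,\qquad \Phi(s):=s(1-s^{\delta})(s^{\delta}-\gamma).$$
Suppose $M:=\max_{[0,1]}\varphi>\rho\geq1$. If the maximum is interior, then $\varphi_x=0$, $\varphi_{xx}\leq0$ there, while $M^{\delta}>1>\gamma$ forces $\Phi(M)<0$, so the equation yields $\frac{1}{\lambda}M\leq\frac{1}{\lambda}M-\beta\Phi(M)\leq\frac{1}{\lambda}\xi(x_0)\leq\frac{1}{\lambda}\rho$, a contradiction. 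If the maximum is at $x=0$, the endpoint condition gives $\varphi_x(0)\leq0$, whereas the boundary law $\varphi_x(0)=g_0(M)\geq g_0(0)=0$ (as $g_0$ is nondecreasing and $M>0$) forces $\varphi_x(0)=0$; a Taylor expansion then gives $\varphi_{xx}(0)\leq0$, and the same contradiction follows, and symmetrically at $x=1$ using $\varphi_x(1)=-g_1(M)\leq0$. The bound $\min_{[0,1]}\varphi\geq-\rho$ is obtained identically, exploiting that $\Phi(s)\geq0$ for $s\leq-1$ and the sign of $g_i$ below zero. Hence $\|\varphi\|_{\L^{\infty}}\leq\rho$, the cut-off is inactive, and $\varphi\in K_{\rho}$ solves $(\I+\lambda\mathscr{A})\varphi=\xi$.

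Finally I would extend from continuous to arbitrary $\xi\in\overline{K_{\rho}}^{\|\cdot\|_{\H}}$ by density. Choosing $\xi_n\in K_{\rho}$ (these are automatically continuous, since $K_{\rho}\subset\H^2(0,1)\hookrightarrow\C([0,1])$, with $\|\xi_n\|_{\L^{\infty}}\leq\rho$) such that $\xi_n\to\xi$ in $\H$, the previous step produces $\varphi_n\in K_{\rho}$ with $\mathscr{B}_{\rho,1/\lambda}(\varphi_n)=\frac{1}{\lambda}\xi_n$. Testing the difference of two such equations and using the strong monotonicity estimate \eqref{2.13} gives $\frac{1}{2}\max\{\frac{\nu}{2},\beta\gamma\}\|\varphi_n-\varphi_m\|_{\H^1}^2\leq\frac{1}{\lambda}\|\xi_n-\xi_m\|_{\L^2}\|\varphi_n-\varphi_m\|_{\L^2}$, so $\{\varphi_n\}$ is Cauchy in $\H^1$; the equation then bounds $\varphi_{n,xx}$ in $\L^2$, upgrading this to convergence in $\H^2$. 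The limit $\varphi$ lies in $K_{\rho}$ (the $\L^{\infty}$-bound and the boundary conditions pass to the limit through $\H^2\hookrightarrow\C^1$) and, after letting $n\to\infty$ in the equation, satisfies $(\I+\lambda\mathscr{A})\varphi=\xi$. Therefore $\xi\in\mathrm{R}\big((\I+\lambda\mathscr{A})|_{K_{\rho}}\big)$.

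The main obstacle is the maximum-principle step. The reaction term is not cut off, so one must exploit the precise sign of the Huxley nonlinearity $\Phi$ outside $[-1,1]$, and the boundary extrema must be excluded via the dissipativity of the nonlinear Neumann conditions ($g_i$ nondecreasing with $g_i(0)=0$). The hypothesis $\rho\geq1$ is exactly what makes $\Phi(M)<0$ for $M>\rho$, and the merely $\H^2$-regularity available for $\L^2$ data is what forces the detour through continuous data together with the closing density argument.
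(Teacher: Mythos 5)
Your proposal is correct and follows essentially the same route as the paper: solve the cut-off resolvent problem via Lemma \ref{lem2.3}, establish $\|\varphi\|_{\L^{\infty}}\leq\rho$ by the maximum principle (interior and endpoint cases, using the sign of the Huxley nonlinearity outside $[-1,1]$ and the nondecreasing $g_i$ with $g_i(0)=0$), and then extend to all of $\overline{K_{\rho}}^{\|\cdot\|_{\H}}$ by density. The only notable difference is in the final limit passage, where you obtain the $\H^1$-Cauchy property of $\{\varphi_n\}$ from the strong monotonicity estimate \eqref{2.13} and then upgrade to $\H^2$-convergence through the equation, rather than appealing to compact embeddings and standard elliptic theory as the paper does; this is a slightly more self-contained justification of the same step.
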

\begin{proof}
	Let us first prove that for each $\xi\in K_{\rho},$ there is a $\varphi\in K_{\rho}$ such that $(\I+\lambda\mathscr{A})(\varphi)=\xi$. Since $\xi\in K_{\rho}\subset\X^*$, by Lemma \ref{lem2.3}, for each $\lambda\in(0,\frac{1}{\omega_{\rho}})$, there exists a $\varphi\in\X$ such that $\mathscr{B}_{\rho,\frac{1}{\lambda}}(\xi)=\frac{\xi}{\lambda}$ in the sense of distributions defined in \eqref{2.20}, which yields  $(\I+\lambda\mathscr{A}_{\rho})(\varphi)=\xi$.  In particular, since $\xi\in K_{\rho}\subset \C([0,1])$, Lemma \ref{lem2.3} implies that  there exists a $\varphi\in\C^2([0,1])$ satisfying
	\begin{equation}\label{2.22}
		\left\{
		\begin{aligned}
			\varphi(x)+\lambda\left(-\nu\varphi_{xx}(x)+\alpha(f_{\rho}(\varphi(x)))_x-\beta \varphi(x)(1-\varphi^{\delta}(x))(\varphi^{\delta}(x)-\gamma)\right)&=\xi(x),\\
			\varphi_x(0)-g_0(\varphi(0))=0,\  \varphi_x(1)+g_1(\varphi(1))&=0,
		\end{aligned}
		\right. 
	\end{equation}
	for all $x\in[0,1]$. Therefore, it is immediate that $\varphi\in\D(\mathscr{A})$. Remember the fact  $f_{\rho}(\psi)=f(\psi)$ for all $\psi\in K_{\rho}$.  Thus, in order to prove $\varphi\in K_{\rho}$  and $(\I+\lambda\mathscr{A})\varphi=\xi$, we need only to show that $-\rho\leq\phi(x)\leq \rho$  for all $x	\in[0,1]$. We exploit  the maximum principle to establish this.

Indeed, if $\varphi$ attains its positive maximum at $x_0\in(0,1)$, then it is immediate that $\varphi_{xx}(x_0)\leq 0$ and $(f_{\rho}(\varphi(x_0)))_x=f'_{\rho}(\varphi(x_0))\varphi_x(x_0)=0$. It should also be noted that
\begin{align*}
	-&\varphi(x_0)(1-\varphi^{\delta}(x_0))(\varphi^{\delta}(x_0)-\gamma)\geq 0,
\end{align*}
provided $1<\varphi^{\delta}(x_0)<\infty$ and $0<\varphi^{\delta}(x_0)<\gamma$. 
Note that $\rho\geq 1$, and if $\gamma<\varphi^{\delta}(x_0)<1$, then
$$\varphi(x)\leq\max\limits_{x\in[0,1]}\varphi(x)=\varphi(x_0)<1\leq\rho.$$ Therefore, for the cases $1<\varphi^{\delta}(x_0)<\infty$ and $0<\varphi^{\delta}(x_0)<\gamma$, from \eqref{2.22}, we infer 
\begin{align*}
	\varphi(x)&\leq\max\limits_{x\in[0,1]}\varphi(x)=\varphi(x_0)\nonumber\\&=\xi(x_0)-\lambda\left(-\nu\varphi_{xx}(x_0)+\alpha(f_{\rho}(\varphi(x_0)))_x-\beta \varphi(x_0)(1-\varphi^{\delta}(x_0))(\varphi^{\delta}(x_0)-\gamma)\right)\nonumber\\&\leq\xi(x_0)\leq\rho. 
\end{align*}

Let us now suppose that $\varphi$  attains its positive maximum at $x_0=0$. Using the boundary condition given in \eqref{2.22}, we have $\varphi_x(0)=g_0(\varphi(0))\geq 0$, since $g_0(0)=0$, $g_0(\cdot)$ is non-decreasing and $\varphi(0)>0$. In this case, $\varphi_x(0)$ cannot be positive, otherwise it would contradict the assumption that $\varphi(0)$  is a positive maximum. Therefore, $\varphi_x(0)=0$  and hence $(f_{\rho}(\varphi(0)))_x=0$. It should also be noted that $\varphi_{xx}(0)\leq 0$. If not, $\varphi_x(x)>\varphi_x(0)=0$ which implies $\varphi(x)\geq \varphi(0)$ for sufficiently small $x$.  Arguing similarly as above,  for the cases $1<\varphi^{\delta}(0)<\infty$ and $0<\varphi^{\delta}(0)<\gamma$, \eqref{2.22} provides $$\varphi(x)\leq\max\limits_{x\in[0,1]}\varphi(x)=\varphi(0)\leq\xi(0)\leq\rho.$$  The case of $\gamma<\varphi^{\delta}(0)<1$ is immediate.  In the same lines, one can show that  if $\varphi$ attains its positive maximum at $x_0=1$, then 
$$\varphi(x)\leq\max\limits_{x\in[0,1]}\varphi(x)=\varphi(1)\leq\xi(1)\leq\rho.$$ 
In a similar way, one can prove that $\varphi(x)\geq -\rho$ holds for all $x\in[0,1]$. Therefore, the proof for the case $\xi\in K_{\rho}$ is completed.

Let us now consider a general case that $\xi\in \overline{K_{\rho}}^{\|\cdot\|_{\H}}$. We can choose a sequence $\{\xi_n\}_{n\in\N}\in K_{\rho}$ such that $\xi_n\to\xi$ strongly in $\H=\L^2(0,1)$. Since $\xi_n\in K_{\rho}\subset\C([0,1])$ and $\|\xi_n\|_{\L^{\infty}}\leq\rho$, by Lemma \ref{lem2.3}  and previous arguments, one can show that the sequence $\{\varphi_n\}_{n\in\mathbb{N}}\subset\C^2([0,1])$  satisfies 
\begin{equation}\label{2.23}
		\left\{
		\begin{aligned}
			\varphi_n(x)+\lambda\left(-\nu\varphi_{n,{xx}}(x)+\alpha(f_{\rho}(\varphi_n(x)))_x-\beta \varphi_n(x)(1-\varphi_n^{\delta}(x))(\varphi_n^{\delta}(x)-\gamma)\right)&=\xi_n(x),\\
			\varphi_{n,x}(0)-g_0(\varphi_n(0))=0,\  \varphi_{n,x}(1)+g_1(\varphi_n(1))&=0,
		\end{aligned}
		\right. 
	\end{equation}
	for all $x\in[0,1]$,  and has the property that $\|\varphi_n\|_{\L^{\infty}}\leq\rho$. By the standard elliptic PDE theory, since $\C^2([0,1])\hookrightarrow\H^2(0,1)$ and the embedding $\H^2(0,1)\hookrightarrow\H^1(0,1)$ is compact, $\varphi_n\to\varphi$ strongly in $\H^1(0,1)$ and $\varphi\in\H^2(0,1)$ satisfies \eqref{2.22} for a.e. $x\in[0,1]$. The compact embedding of $\H^1(0,1)\hookrightarrow\C([0,1])$ implies  $\varphi_n\to\varphi$ pointwise and $\|\varphi\|_{\L^{\infty}}\leq\sup\limits_{n\in\N}\|\varphi_n\|_{\L^{\infty}}\leq\rho$, which proves $\varphi\in K_{\rho}$. Hence $f_{\rho}(\varphi)=f(\varphi)$. Along  with \eqref{2.22}, we conclude that $(\I+\lambda\mathscr{A}_{\rho})(\varphi)=\xi$, which completes the proof. 
\end{proof}	
An application of the Crandall-Liggett Theorem (\cite{MGTL}, \cite[Theorem 5.1, Chapter III]{JAW}) yields the following existence of a contraction $\C^0$-semigroup for the problem \eqref{1}.
\begin{theorem}\label{thm25}
	Assume that conditions of Lemmas \ref{lem2.1} and \ref{lem2.3} hold true. Then $-\mathscr{A}$ generates a nonlinear contraction $\C_0$-semigroup $\S(t)$ on $\L^{\infty}(0,1)=\bigcup\limits_{m=1}^{\infty}\overline{K_{m}}^{\|\cdot\|_{\H}},$ namely, 
	\begin{equation}
		\left\{
	\begin{aligned}
		\S(t)(v)&=\lim\limits_{n\to\infty}\left(\I+\frac{t}{n}\mathscr{A}\right)^{-1}(v)\ \text{ in }\ \H=\L^2(0,1)\\ &\qquad\text{ for all }\ v\in\L^{\infty}(0,1),\ \text{ for all }\ t>0.
	\end{aligned}
	\right. 
	\end{equation}
	Moreover, $\|\S(t)(v)\|_{\L^{\infty}}\leq\|v\|_{\L^{\infty}},$ for all $v\in\L^{\infty}(0,1)$ and $$\|\S(t)(v)-\S(t)(w)\|_{\L^{\infty}}\leq e^{\tilde{\omega}t}\|v-w\|_{\L^{\infty}},\ \text{ for all }\ v,w\in\L^{\infty}(0,1)\ \text{ and }\ t\geq 0,$$ where $\tilde{\omega}=\omega_{\max\{\|v\|_{\L^{\infty}},\|w\|_{\L^{\infty}}\}}$ is defined by \eqref{2.12}. 
\end{theorem}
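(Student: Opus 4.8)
The plan is to verify the three hypotheses of the Crandall--Liggett theorem for the operator $\mathscr{A}$ restricted to each ball $K_{\rho}$ (on which $\mathscr{A}$ coincides with $\mathscr{A}_{\rho}$), apply it to obtain a quasi-contraction semigroup on $\overline{K_{\rho}}^{\|\cdot\|_{\H}}$, and finally patch these semigroups together over $\rho$ using $\L^{\infty}(0,1)=\bigcup_{m\geq1}\overline{K_m}^{\|\cdot\|_{\H}}$. The three ingredients are: (i) $\omega_{\rho}$-accretivity of $\mathscr{A}|_{K_{\rho}}$; (ii) the range condition $\mathrm{R}((\I+\lambda\mathscr{A})|_{K_{\rho}})\supset\overline{K_{\rho}}^{\|\cdot\|_{\H}}$ for $\lambda\in(0,1/\omega_{\rho})$; and (iii) density of the domain. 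Ingredient (ii) is exactly the content of Lemma \ref{lem2.6}, and (iii) holds since $K_{\rho}$ is dense in $\overline{K_{\rho}}^{\|\cdot\|_{\H}}$ by definition.

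For the accretivity I would argue in two norms. In $\H=\L^2(0,1)$, quasi-accretivity is immediate from the strong monotonicity estimate \eqref{2.11}--\eqref{2.13} of Lemma \ref{lem2.1}: since monotonicity and accretivity coincide in a Hilbert space, $\mathscr{A}_{\rho}+\omega_{\rho}\I$ is accretive, which furnishes the resolvent bound $\|(\I+\lambda\mathscr{A})^{-1}\xi_1-(\I+\lambda\mathscr{A})^{-1}\xi_2\|_{\L^2}\leq(1-\lambda\omega_{\rho})^{-1}\|\xi_1-\xi_2\|_{\L^2}$, and hence the convergence of the resolvent iterates $(\I+\frac{t}{n}\mathscr{A})^{-n}v$ in $\L^2$ to a limit $\S(t)v$ that is $\omega_{\rho}$-quasi-contractive in the $\L^2$-norm.

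The remaining, and hardest, part is to upgrade these to the stated $\L^{\infty}$ estimates. The invariance $\|\S(t)v\|_{\L^{\infty}}\leq\|v\|_{\L^{\infty}}$ follows from Lemma \ref{lem2.6}: taking $\rho=\|v\|_{\L^{\infty}}$, each application of the resolvent maps $\overline{K_{\rho}}^{\|\cdot\|_{\H}}$ into $K_{\rho}$, so every iterate $(\I+\frac{t}{n}\mathscr{A})^{-n}v$ lies in $K_{\rho}$ with sup-norm at most $\rho$, and this bound survives the $\L^2$-limit by lower semicontinuity of the $\L^{\infty}$-norm along $\L^2$-convergent sequences. The $\L^{\infty}$-quasi-contraction is the genuine obstacle: it requires an $\L^{\infty}$-comparison principle for the resolvent, namely that for $\xi_1,\xi_2\in K_{\rho}$ the corresponding solutions $\varphi_1,\varphi_2$ of \eqref{2.22} satisfy $\|\varphi_1-\varphi_2\|_{\L^{\infty}}\leq(1-\lambda\omega_{\rho})^{-1}\|\xi_1-\xi_2\|_{\L^{\infty}}$. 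I would prove this by applying the maximum principle to $w=\varphi_1-\varphi_2$ exactly as in Lemma \ref{lem2.6}: at an interior positive maximum $x_0$ one has $w_{xx}(x_0)\leq0$ and $w_x(x_0)=0$, the latter forcing the advective contribution $\alpha[(f_{\rho}(\varphi_1))_x-(f_{\rho}(\varphi_2))_x](x_0)$ to reduce to a term controlled through the Lipschitz bound $L_{\rho}$, while the reaction term contributes precisely the defect measured by $\omega_{\rho}$; boundary maxima at $x=0,1$ are dispatched using that $g_0,g_1$ are nondecreasing with $g_i(0)=0$. The advection term is what makes this delicate, since it is not sign-definite and must be absorbed using the alignment $w_x(x_0)=0$ together with the reaction defect.

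Iterating the $\L^{\infty}$-resolvent bound gives $\|(\I+\frac{t}{n}\mathscr{A})^{-n}v-(\I+\frac{t}{n}\mathscr{A})^{-n}w\|_{\L^{\infty}}\leq(1-\frac{t}{n}\omega_{\rho})^{-n}\|v-w\|_{\L^{\infty}}$, and letting $n\to\infty$ together with the lower semicontinuity of $\|\cdot\|_{\L^{\infty}}$ along the $\L^2$-convergent iterates yields $\|\S(t)v-\S(t)w\|_{\L^{\infty}}\leq e^{\omega_{\rho}t}\|v-w\|_{\L^{\infty}}$. Finally, since the semigroups constructed on the nested sets $\overline{K_m}^{\|\cdot\|_{\H}}$ are consistent by uniqueness of the limit, they define a single semigroup $\S(t)$ on $\L^{\infty}(0,1)$; choosing $\rho=\max\{\|v\|_{\L^{\infty}},\|w\|_{\L^{\infty}}\}$ produces the constant $\tilde{\omega}=\omega_{\rho}$ in the statement, completing the proof.
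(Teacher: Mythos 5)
Your core strategy is the one the paper uses: apply the Crandall--Liggett theorem to $-\mathscr{A}\big|_{K_{\rho}}$ for each $\rho\geq 1$, with quasi-accretivity in $\H=\L^2(0,1)$ supplied by the strong monotonicity \eqref{2.11}--\eqref{2.13} of Lemma \ref{lem2.1} (monotone $=$ accretive in a Hilbert space), the range condition supplied by Lemma \ref{lem2.6}, and the semigroup on all of $\L^{\infty}(0,1)$ obtained by patching the nested family $\S_{\rho}(t)$ over $\rho$. Those parts, together with your derivation of $\|\S(t)v\|_{\L^{\infty}}\leq\|v\|_{\L^{\infty}}$ from resolvent invariance of $K_{\rho}$ and lower semicontinuity of the sup-norm along the $\L^2$-limit, are correct and match the paper. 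The genuine gap is exactly in the step you flag as the hardest: the $\L^{\infty}$ quasi-contraction. At an interior positive maximum $x_0$ of $w=\varphi_1-\varphi_2$ you get $w_x(x_0)=0$, hence $\varphi_{1,x}(x_0)=\varphi_{2,x}(x_0)=:p$, but then the advective difference is
\begin{align*}
\alpha\left[(f_{\rho}(\varphi_1))_x-(f_{\rho}(\varphi_2))_x\right](x_0)
=\alpha\, p\,\left[f_{\rho}'(\varphi_1(x_0))-f_{\rho}'(\varphi_2(x_0))\right],
\end{align*}
which does not vanish and can only be bounded by $\alpha\,|p|\cdot\mathrm{Lip}(f_{\rho}')\,w(x_0)$. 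The factor $|p|=|\varphi_{1,x}(x_0)|$ admits no bound uniform in $\lambda$: in the resolvent equation \eqref{2.22} the diffusion carries the factor $\lambda$, so all available gradient estimates blow up as $\lambda\to 0$. The per-step resolvent bound therefore degrades to $(1-\lambda\omega_{\rho}-\lambda C(\lambda))^{-1}$ with $C(\lambda)\to\infty$, and with $\lambda=t/n$ the iterate $(1-\tfrac{t}{n}\omega_{\rho}-\tfrac{t}{n}C(t/n))^{-n}$ diverges as $n\to\infty$, so no exponential bound survives the Crandall--Liggett limit. A second defect in the same step: $\omega_{\rho}$ in \eqref{2.12} is an $\L^2$-theoretic constant, produced by Young's and Agmon's inequalities in \eqref{2.11}; the pointwise ``reaction defect'' at $x_0$ is governed instead by the pointwise Lipschitz constant of $v\mapsto\beta v(1-v^{\delta})(v^{\delta}-\gamma)$ on $[-\rho,\rho]$, a different (and larger) quantity, so even the reaction term does not yield the constant $\omega_{\rho}$ in a maximum-principle argument.

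You should also know that the paper does not prove this part either: its proof of Theorem \ref{thm25} obtains from Crandall--Liggett the estimate $\|\S_{\rho}(t)v-\S_{\rho}(t)w\|_{\L^{2}}\leq e^{\omega_{\rho}t}\|v-w\|_{\L^{2}}$, i.e.\ the quasi-contraction in the $\L^2$-norm, which is what $\L^2$-accretivity delivers, and then patches over $\rho$; the $\L^{\infty}$-norms in the theorem's final display appear to be a slip for $\L^2$ (the corresponding statement in the cited source \cite{KIYY} is likewise an $\L^2$-estimate). So your instinct that the literal $\L^{\infty}$ statement would require a genuinely new accretivity-in-$\L^{\infty}$ ingredient is sound, but the ingredient you propose does not work; the provable (and intended) version is the $\L^2$ estimate, which your first two steps already establish.
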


\begin{proof}
	By the Crandall-Liggett Theorem, for each $\rho\geq 1$, $-\mathscr{A}\big|_{K_{\rho}}$ generates a $\C_0$-semigroup $\S_{\rho}(t)$ on $\overline{K_{\rho}}^{\|\cdot\|_{\H}}$ such that $\|\S_{\rho}(t)(v)\|_{\L^{\infty}}\leq\rho,$ for all $v\in \overline{K_{\rho}}^{\|\cdot\|_{\H}}$ and $\|\S_{\rho}(t)(v)-\S_{\rho}(t)(w)\|_{\L^{2}}\leq e^{\omega_{\rho}t}\|v-w\|_{\L^{2}},$ for all $v,w\in K_{\rho}$ and $t\geq 0$. Therefore $\S_{\rho}(t)$  is increasing in $\rho$  in the sense that $\S_{\rho_2}(t)$ is an	extension of $\S_{\rho_1}(t)$ if $\rho_2\geq\rho_1$. Therefore, the desired nonlinear contraction $\C_0$-semigroup $\S(\cdot)$ exists, which completes the proof. 
\end{proof}
Note that the definition of the $\C_0$-semigroup $\{\S(t)\}_{t\geq 0}$ is in the weak sense, that is,  $u(\cdot,t)=\S(t)(u_0(\cdot)),$ $t\geq 0$  is not necessarily differentiable with respect to time $t$, as $\mathscr{A}$ is a nonlinear operator. We should obtain stronger results so that the problem \eqref{1} has either a strong or a classical solution. Using the standard elliptic PDE arguments, one can see that $\mathscr{A}:K_m\subset\H\to\H$  is a closed nonlinear map. Furthermore, we have $\bigcup\limits_{m=1}^{\infty}K_m=\D(\mathscr{A})$. An application of \cite[Theorem 5.2, Chapter III]{MGTL} yields  the following existence and uniqueness  of strong solutions of the problem \eqref{1} along with \eqref{2p6}.
\begin{theorem}\label{thm2.6}
	Assume that the conditions of Theorem \ref{thm25} hold true. Then the nonlinear contraction semigroup $\{\S(t)\}_{t\geq 0}$ defined in  Theorem \ref{thm25} satisfies $\S(t)(\D(\mathscr{A}))\subset\D(\mathscr{A})$, and $\S(t)(u_0)$ is differentiable in $t$ for all $t>0$ provided $u_0\in\D(\mathscr{A})$. In other words, there is a \emph{unique strong solution} $u(x,t)$ of the following  GBH equation: 
	\begin{equation}
		\left\{
		\begin{aligned}
			u_t(x,t)&=\nu u_{xx}(x,t)-\alpha u^{\delta}(x,t)u_x(x,t)\\&\quad+\beta u(x,t)(1-u^{\delta}(x,t))(u^{\delta}(x,t)-\gamma),\ x\in[0,1],\ t>0,\\
				u_x(0,t)&=g_0(u(0,t)),\ t\geq 0,\\
			u_x(1,t)&=-g_1(u(1,t)), \ t\geq 0,\\
			u(x,0)&=u_0(x), \ x\in[0,1],
		\end{aligned}
		\right.
	\end{equation}
	provided $u_0\in\D(\mathscr{A})$. Moreover, $\|u(t)\|_{\L^2}\leq\|u_0\|_{\L^2}$  for all $t\geq 0$, provided $u_0\in\D(\mathscr{A})$. 
\end{theorem}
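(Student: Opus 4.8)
The plan is to promote the mild solution given by the contraction semigroup $\{\S(t)\}_{t\geq 0}$ of Theorem \ref{thm25} to a strong solution by invoking the abstract regularity result \cite[Theorem 5.2, Chapter III]{MGTL}. Since the ambient space $\H=\L^2(0,1)$ is a Hilbert space (in particular reflexive), that theorem guarantees that, for a \emph{closed} generator, the orbit $t\mapsto\S(t)u_0$ issuing from $u_0\in\D(\mathscr A)$ stays in $\D(\mathscr A)$, is Lipschitz continuous in $t$, is differentiable for $t>0$, and solves $\frac{d}{dt}\S(t)u_0=-\mathscr A(\S(t)u_0)$. Consequently the only structural facts I must supply are: (i) $\mathscr A\big|_{K_m}\colon K_m\subset\H\to\H$ is closed for every $m\geq 1$, and (ii) $\bigcup_{m\geq 1}K_m=\D(\mathscr A)$. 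Claim (ii) is immediate, because every $v\in\D(\mathscr A)\subset\H^2(0,1)\hookrightarrow\C([0,1])$ is bounded and hence lies in $K_m$ for any $m\geq\|v\|_{\L^\infty}$.

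Establishing the closedness in (i) is, I expect, the principal obstacle. Given $v_n\in K_m$ with $v_n\to v$ and $\mathscr A(v_n)=h_n\to h$ in $\H$, the uniform bound $\|v_n\|_{\L^\infty}\leq m$ keeps the convective term $v_n^{\delta}v_{n,x}$ and the reaction term bounded in $\L^2$ once a uniform $\H^1(0,1)$-bound on $v_n$ is in hand (which follows by testing the equation against $v_n$ and invoking the coercivity estimate behind \eqref{2.11}); solving the equation for $-\nu v_{n,xx}$ then furnishes a uniform $\H^2(0,1)$-bound. The compact embeddings $\H^2(0,1)\hookrightarrow\C^1([0,1])$ and $\H^1(0,1)\hookrightarrow\C([0,1])$ let me extract a subsequence converging in $\C^1([0,1])$, so that $v_{n,x}\to v_x$ and $v_n\to v$ uniformly. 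This uniform convergence is exactly what is needed to pass to the limit in every linear, convective and reaction term, and, crucially, in the nonlinear boundary relations $v_{n,x}(0)=g_0(v_n(0))$ and $v_{n,x}(1)=-g_1(v_n(1))$, using the continuity of $g_0,g_1$. The limit thus satisfies $v\in\H^2(0,1)$, $\mathscr A(v)=h$ and $\|v\|_{\L^\infty}\leq m$, i.e. $v\in K_m$, which is the desired closedness. Because $f_\rho=f$ on $K_m$, the operator $\mathscr A$ there coincides with the genuine GBH operator, so the abstract evolution equation $\frac{d}{dt}u=-\mathscr A(u)$ is precisely the GBH equation \eqref{1} under the feedback conditions \eqref{2p6}.

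Uniqueness I would read off from the quasi-accretivity recorded in \eqref{2.13}: for two strong solutions $u,\widetilde u$ sharing an initial datum and remaining in a common $K_m$, testing the difference of the equations and using \eqref{2.13} gives $\frac{d}{dt}\|u-\widetilde u\|_{\L^2}^2\leq 2\omega_m\|u-\widetilde u\|_{\L^2}^2$, whence Gr\"onwall's inequality forces $u=\widetilde u$. Finally, for the $\L^2$ bound I would test the GBH equation against $u$: the diffusion term yields $-\nu\|u_x\|_{\L^2}^2$ together with the boundary contributions $-\nu g_0(u(0))u(0)-\nu g_1(u(1))u(1)$, both nonpositive since $g_0,g_1$ are nondecreasing and vanish at the origin; the convective term reduces to a pure boundary expression; and the reaction nonlinearity is controlled through the elementary pointwise bound $(1-u^{\delta})(u^{\delta}-\gamma)\leq\frac14(1-\gamma)^2$. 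Assembling these ingredients produces an energy inequality for $\|u(t)\|_{\L^2}^2$; extracting the non-expansivity $\|u(t)\|_{\L^2}\leq\|u_0\|_{\L^2}$ from it is the delicate step, resting on the dissipation provided by the monotone feedback law and on the $\L^\infty$-contraction already secured in Theorem \ref{thm25}.
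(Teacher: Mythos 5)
Your existence--uniqueness argument is the paper's own route, fleshed out: the paper reduces Theorem \ref{thm2.6} to exactly your facts (i) and (ii) --- closedness of $\mathscr{A}\big|_{K_m}:K_m\subset\H\to\H$, asserted there via ``standard elliptic PDE arguments,'' and $\bigcup_{m=1}^{\infty}K_m=\D(\mathscr{A})$ --- and then invokes the same abstract result \cite[Theorem 5.2, Chapter III]{MGTL}. Your compactness proof of closedness (uniform $\H^2$ bound, convergence in $\C^1([0,1])$, continuity of $g_0,g_1$ to pass to the limit in the boundary relations) and your Gr\"onwall uniqueness via the quasi-accretivity \eqref{2.13} are legitimate expansions of steps the paper leaves implicit.

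The genuine gap is the final claim $\|u(t)\|_{\L^2}\leq\|u_0\|_{\L^2}$, which your sketch does not (and cannot) close. Testing \eqref{1} with $u$ under \eqref{2p6} gives
\begin{align*}
\frac{1}{2}\frac{d}{dt}\|u(t)\|_{\L^2}^2&=-\nu\|u_x(t)\|_{\L^2}^2-\nu u(0,t)g_0(u(0,t))-\nu u(1,t)g_1(u(1,t))\\
&\quad-\frac{\alpha}{\delta+2}\left(u^{\delta+2}(1,t)-u^{\delta+2}(0,t)\right)+\beta\int_0^1u^2(1-u^{\delta})(u^{\delta}-\gamma)dx,
\end{align*}
and the two ingredients you list point the wrong way: the pointwise bound $(1-u^{\delta})(u^{\delta}-\gamma)\leq\frac{1}{4}(1-\gamma)^2$ turns the reaction term into a \emph{growth} term $+\frac{\beta(1-\gamma)^2}{4}\|u(t)\|_{\L^2}^2$ (Theorem \ref{thm2.6} imposes no condition such as $\nu>\frac{\beta}{2}(1-\gamma)^2$, and the Poincar\'e mechanism of Section \ref{sec3} requires the specific feedback \eqref{eq6}, not general nondecreasing $g_0,g_1$), while the convective boundary term $-\frac{\alpha}{\delta+2}\left(u^{\delta+2}(1,t)-u^{\delta+2}(0,t)\right)$ has no sign. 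The $\L^{\infty}$ bound of Theorem \ref{thm25} does not rescue this either, since $\|u_0\|_{\L^{\infty}}$ does not compare with $\|u_0\|_{\L^2}$. In fact no argument can close this step under the stated hypotheses: take $g_0=g_1\equiv 0$ (continuous, nondecreasing, vanishing at zero, hence admissible) and $u_0\equiv c$ with $\gamma^{1/\delta}<c<1$; then $u_0\in\D(\mathscr{A})$, and by uniqueness the strong solution is spatially constant, $u(x,t)=\phi(t)$ with $\dot{\phi}=\beta\phi(1-\phi^{\delta})(\phi^{\delta}-\gamma)>0$, so $\|u(t)\|_{\L^2}=\phi(t)>c=\|u_0\|_{\L^2}$ for $t>0$. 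To be fair, the paper's own proof (the paragraph preceding the theorem) also says nothing about this bound --- it covers only existence, invariance of $\D(\mathscr{A})$ and differentiability --- so your honest admission that the step is ``delicate'' mirrors a defect of the paper itself; but as written your proposal leaves this part of the statement unproven.
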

\begin{remark}
Theorem \ref{thm2.6} establishes  that $\|u(t)\|_{\H^2}$ does not blow up at any finite time $t$ provided $u_0\in\D(\mathscr{A})$.  Therefore $\|u(t)\|_{\H^1}$, $\|u(t)\|_{\L^{\infty}}$, etc. do not blow up at any finite time $t$. Nevertheless, Theorem \ref{thm2.6} does not provide explicit bounds for $\|u(t)\|_{\H^1}$, $\|u(t)\|_{\L^{\infty}}$, etc. 
\end{remark}

\subsection{Classical solutions}\label{cls}
To establish the existence and uniqueness of classical solutions of \eqref{1} subject to \eqref{2} given in the feedback form  \eqref{2p6}, one can proceed in the following way: We can consider the problem \eqref{1} as  $\mathscr{L}u\equiv u_t+\nu u_{xx}+b(u,u_x)=0$, where $b(u,u_x)=\alpha u^{\delta}u_x+\beta u(1-u^{\delta})(u^{\delta}-\gamma)$. A simple application of Young's inequality yields 
\begin {align*}
-ub(u,p)  &=-\alpha u^{\delta+1}p+\beta(\gamma+1)u^{\delta+2}-\beta\gamma u^2-\beta u^{2\delta+2}\nonumber\\&\leq\frac{\beta}{2}u^{2\delta+2}+\frac{\alpha^2}{\beta}|p|^2+\frac{\beta}{2}u^{2\delta+2}+\frac{\beta}{2}(1+\gamma)^2u^2-\beta\gamma u^2-\beta u^{2\delta+2}\nonumber\\&=\frac{\alpha^2}{\beta}|p|^2+\frac{\beta}{2}(1+\gamma^2)u^2.
\end{align*}
Therefore the existence of classical solutions follows from \cite[Theorem 7.4, Chapter V]{OAVA}. This Theorem establishes, for a more general quasi-linear parabolic boundary value problem, the existence of a unique solution in the H\"o1der space of functions $\mathcal{H}^{2+\ell,1+\ell}([0,1]\times[0,T])$ for some $\ell>0$. Since $\mathcal{H}^{2+\ell,1+\ell}([0,1]\times[0,T])\hookrightarrow\mathrm{C}^{2,1}([0,1]\times[0,T])$, we obtain	the existence of classical solutions for time intervals $[0, T]$, where $T > 0$	is arbitrarily large.  The proof in \cite{OAVA} is based on a linearization of the system and an application of the Leray-Schauder fixed point theorem.

Let us consider the following Green's function (cf. \cite{GB,JRC}):
\begin{align*}
G(x,y,t,s)&=1+2\sum_{n=1}^{\infty}\cos(n\pi x)\cos(n\pi y)e^{-\nu n^2\pi^2(t-\tau)}\nonumber\\&=\frac{1}{\sqrt{4\pi\nu(t-\tau)}}\sum_{n=-\infty}^{\infty}\left(e^{-\frac{(x-y-2n)^2}{4\nu(t-\tau)}}+e^{-\frac{(x+y-2n)^2}{4\nu(t-\tau)}}\right),
\end{align*}
corresponding to the heat operator $$\mathcal{L}u\equiv u_t-\nu u_{xx}, \ 0\leq x\leq 1, \ t>0,$$  with Neumann boundary conditions $$u_x(0,t)=g_0(u(0,t)),\ u_x(1,t)=g_1(u(1,t)), \ t>0. $$ Let us take 
\begin{equation}\label{1p3}
g(u)=-\alpha u^{\delta}u_x+\beta u(1-u^{\delta})(u^{\delta}-\gamma).
\end{equation} 
Then, the function $u(x,t) $ is a classical solution of \eqref{1}-\eqref{2} with continuous initial data $u_0(x)$ if and only if 
\begin{align*}
u(x,t)=F(u)(x,t)&\equiv\int_0^1G(x,y,t,0)u_0(y)dy+\int_0^t\int_0^1G(x,y,t,s)g(u(y,s))dyds\nonumber\\&\quad+\int_0^tG(x,1,t,s)g_1(u(1,s))ds+\int_0^tG(x,0,t,s)g_0(u(0,s))ds.
\end{align*}
The local solvability of the above equation can be established by applying the Leray-Schauder fixed point theorem to the iteration $u^{n+1}(x,t)=F(u^n)(x,t),$ $n=0,1,2,\ldots,$ with starting function $u^0(x,t)=u_0(x)$. Then by using apriori estimates (cf. \cite{AKMTM,AKMTM1,AKMTM2}, etc.), one can show that this solution can be extended as a global solution (see \cite{ABMK} for the case of viscous Burgers equation).  Smooth solutions of the system \eqref{1} along with \eqref{2p6}  should clearly be compatible with the boundary conditions at $t=0$ in some sense (\cite{OAVA}). For the definition of compatibility conditions of different order, we refer the readers to \cite{OAVA}. We summarize the discussion as the following theorem:
\begin{theorem}\label{thm2.2}
Consider the system \eqref{1} along with \eqref{2p6}. For any $T>0$, $\ell>0$  and for any $w_0\in\mathcal{H}^{2+\ell}([0,1])$ satisfying the compatibility condition 	of order $[(\ell+1)/2]$,  there exists a unique classical solution $u\in\mathcal{H}^{2+\ell,1+\ell}([0,1]\times[0,T])\hookrightarrow\mathrm{C}^{2,1}([0,1]\times[0,T])$. 
\end{theorem}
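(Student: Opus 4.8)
The plan is to recast \eqref{1} in the standard quasilinear parabolic non-divergence form with constant principal coefficient $\nu>0$ and first-order lower-order term $b(u,u_x)$ as displayed above, and then to invoke the general solvability theory for such problems from \cite[Theorem 7.4, Chapter V]{OAVA}. Uniform parabolicity is immediate here, since the principal part is the constant-coefficient operator $\nu\partial_{xx}$ with ellipticity constant $\nu>0$; the boundary operators in \eqref{2p6} are of the (nonlinear) Neumann type with smooth $g_0,g_1$, and the dependence of $b$ on $(u,u_x)$ is smooth on bounded sets. Hence the only non-automatic hypothesis of the cited theorem is a one-sided growth (structure) condition on the lower-order term.

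First, I would verify precisely that structure condition, namely that $-u\,b(u,p)$ grows at most quadratically in $p$ and in $u$. This is exactly the estimate $-u\,b(u,p)\le\frac{\alpha^2}{\beta}|p|^2+\frac{\beta}{2}(1+\gamma^2)u^2$ obtained above by Young's inequality. The decisive point is the exact cancellation of the top-order reaction contribution $-\beta u^{2\delta+2}$ against the two terms $\frac{\beta}{2}u^{2\delta+2}$ arising from the $-\alpha u^{\delta+1}p$ and $\beta(1+\gamma)u^{\delta+2}$ estimates; this removes the superquadratic growth in $u$ that would otherwise obstruct an $\L^{\infty}$ bound. With this condition in force, \eqref{1}--\eqref{2p6} falls within the scope of \cite[Theorem 7.4, Chapter V]{OAVA}, whose proof proceeds by linearizing the equation and applying the Leray--Schauder fixed point theorem, the fixed point being located by means of a priori Hölder (Schauder) estimates.

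An alternative, more self-contained route that I would also present is the Green's function formulation $u=F(u)$ displayed above. Here one applies the Leray--Schauder theorem to the iteration $u^{n+1}=F(u^n)$ to obtain local-in-time solvability, and then uses the a priori estimates (a maximum bound on $u$ together with a gradient bound on $u_x$; cf. \cite{AKMTM,AKMTM1,AKMTM2}), which are uniform on $[0,T]$, to continue the local solution to the arbitrarily large interval $[0,T]$, exactly as in the viscous Burgers case treated in \cite{ABMK}. Uniqueness would then follow from a standard Gronwall/energy estimate for the difference of two solutions, using that both solutions are bounded in $\L^{\infty}$, so that $b$ is Lipschitz on the relevant range.

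Finally, to obtain the full regularity $u\in\mathcal{H}^{2+\ell,1+\ell}([0,1]\times[0,T])$ up to the parabolic boundary, and in particular at the corners $\{0,1\}\times\{0\}$, I would impose the compatibility conditions of order $[(\ell+1)/2]$ on the initial datum $w_0$, as required in \cite{OAVA}; the embedding $\mathcal{H}^{2+\ell,1+\ell}([0,1]\times[0,T])\hookrightarrow\C^{2,1}([0,1]\times[0,T])$ then delivers a genuine classical solution. I expect the main obstacle to be the a priori gradient bound for $u_x$ when $\delta\ge1$: the convection term $\alpha u^{\delta}u_x$ is superlinear in $u$, so the gradient estimate must combine the $\L^{\infty}$ bound on $u$ with the sign/structure condition above rather than relying on a linear Bernstein-type argument; once that bound is secured, the passage to higher Hölder regularity is routine parabolic bootstrapping.
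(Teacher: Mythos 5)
Your proposal follows essentially the same route as the paper: it verifies the one-sided structure condition $-u\,b(u,p)\le\frac{\alpha^2}{\beta}|p|^2+\frac{\beta}{2}(1+\gamma^2)u^2$ via Young's inequality (with the same cancellation of the $\beta u^{2\delta+2}$ term), invokes \cite[Theorem 7.4, Chapter V]{OAVA} with its linearization/Leray--Schauder scheme, and offers the identical Green's function iteration $u^{n+1}=F(u^n)$ with a priori estimates and compatibility conditions as the complementary route. Your added remarks on uniqueness via Gronwall and on the gradient bound are consistent refinements of the same argument, not a different approach.
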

It is important to note that a crucial step in the proof is establishing \emph{uniform a priori estimates} for the problem \eqref{1}. These estimates are for the H\"older norms of solutions and hence are different from our Sobolev 	type energy estimates. The H\"older estimates establish boundedness of	solutions, while the energy estimates discussed in this paper establish stability. 

	\section{Boundary Control}\label{sec3}\setcounter{equation}{0}  
	In this section, we discuss the exponential stability for the  Burgers-Huxley  equation \eqref{1} subject to \eqref{2} for non-adaptive and adaptive control (a damped version) cases, that is, when the viscosity $\nu>\frac{\beta}{2}(1-\gamma)^2$ is known and $\nu>0$ is unknown. 
	
	The following form of Poincar\'e's inequality is used frequently in the paper (see \cite[Lemma A.1]{MK}). 
	\begin{lemma}[Poincar\'e's inequality]\label{lem3.1}
		For any $u\in\C^1([0,1])$, we have 
		\begin{align}\label{eq0}
			\int_0^1u^2(x)dx\leq 2u^2(0)+2\left(\int_0^1u_x^2(x)dx\right).
		\end{align}
	\end{lemma}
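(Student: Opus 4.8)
The plan is to reduce everything to the fundamental theorem of calculus together with two elementary inequalities; no heavy machinery is needed, since \eqref{eq0} is simply the one-dimensional Poincar\'e inequality that retains the boundary contribution $u^2(0)$ rather than assuming a vanishing trace.

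First I would fix $x\in[0,1]$ and write $u(x)=u(0)+\int_0^x u_x(\zeta)\,d\zeta$, which is valid because $u\in\C^1([0,1])$. Applying the elementary inequality $(a+b)^2\le 2a^2+2b^2$ gives
\begin{align*}
u^2(x)\le 2u^2(0)+2\left(\int_0^x u_x(\zeta)\,d\zeta\right)^2.
\end{align*}
Next I would control the last term by the Cauchy-Schwarz (H\"older) inequality, namely $\left(\int_0^x u_x(\zeta)\,d\zeta\right)^2\le x\int_0^x u_x^2(\zeta)\,d\zeta$, and then bound $x\le 1$ and enlarge the domain of integration to $[0,1]$ in order to obtain $u^2(x)\le 2u^2(0)+2\int_0^1 u_x^2(\zeta)\,d\zeta$ for every $x\in[0,1]$.

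Finally, since the right-hand side of this pointwise bound is independent of $x$, integrating over $x\in[0,1]$ (an interval of unit length) yields exactly \eqref{eq0}. There is essentially no hard step here: the only points requiring mild care are the choice of the factor $2$ in the inequality $(a+b)^2\le 2a^2+2b^2$ and the use of $x\le 1$ when passing from the partial integral $\int_0^x$ to the full integral $\int_0^1$, and these are precisely what produce the constants appearing in the stated inequality.
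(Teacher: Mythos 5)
Your proof is correct and follows essentially the same route as the paper: the fundamental theorem of calculus, the elementary bound $(a+b)^2\le 2a^2+2b^2$ (which the paper phrases equivalently by expanding the square and estimating the cross term), Cauchy--Schwarz with $x\le 1$ to pass to $\int_0^1 u_x^2$, and a final integration in $x$. If anything, your version is slightly more careful, since the paper leaves the Cauchy--Schwarz step implicit.
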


	\begin{proof}
		For any $u\in\C^1[0,1]$, we know that 
		\begin{align*}
			u(x)=u(0)+\int_0^xu_x(\zeta)d\zeta,
		\end{align*}
		so that 
		\begin{align*}
			u^2(x)&=u^2(0)+2u(0)\left(\int_0^xu_x(\zeta)d\zeta\right)+\left(\int_0^xu_x(\zeta)d\zeta\right)^2\nonumber\\&\leq 2u^2(0)+2\left(\int_0^xu_x(\zeta)d\zeta\right)^2\leq 2u^2(0)+2\left(\int_0^1u_x^2(x)dx\right).
		\end{align*}
		Ingratiating the above inequality from $0$ to $1$, we have the  Poincar\'e inequality \eqref{eq0}. 
	\end{proof}

	\subsection{Non-adaptive control}  The aim of this subsection  is to establish the $\L^2$- and $\H^1$-exponential stabilization results for the GBH  equation \eqref{1} subject to \eqref{2}, when the viscosity $\nu$ is known. 
	\begin{theorem}\label{thm2.3}
		For $$\nu>\frac{\beta}{2}(1-\gamma)^2,$$ 	the GBH equation \eqref{1} subject to \eqref{2} is exponentially stable in $\L^2(0,1)$ under the following control law: 
		\begin{equation}\label{eq6}
			\left\{
			\begin{aligned}
				w_1(t)&=(\eta_1+1)u(0,t)+\frac{\alpha}{\nu(\delta+2)}u^{\delta+1}(0,t),\ \eta_1\geq 0, \\
				w_2(t)&=-\left[\eta_2u(1,t)+\frac{\alpha}{\nu(\delta+2)}u^{\delta+1}(1,t)\right], \ \eta_2\geq 0. 
			\end{aligned}
			\right.
		\end{equation}
	\end{theorem}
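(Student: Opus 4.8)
The natural route is an energy (Lyapunov) argument built on $V(t)=\frac12\|u(\cdot,t)\|_{\L^2}^2$, using the regularity supplied by the classical solution of Theorem \ref{thm2.2} (equivalently the strong solution of Theorem \ref{thm2.6}) to justify differentiating $V$ and integrating by parts. First I would multiply \eqref{1} by $u$ and integrate over $(0,1)$. The viscous term becomes $\nu[uu_x]_0^1-\nu\|u_x\|_{\L^2}^2$, while the convective term is a \emph{pure boundary} term, since $u^\delta u_x\,u=u^{\delta+1}u_x=\frac{1}{\delta+2}(u^{\delta+2})_x$. This produces the identity
\begin{align*}
\frac{d}{dt}V(t)&=-\nu\|u_x\|_{\L^2}^2+\nu u(1,t)u_x(1,t)-\nu u(0,t)u_x(0,t)\\
&\quad-\tfrac{\alpha}{\delta+2}\big(u^{\delta+2}(1,t)-u^{\delta+2}(0,t)\big)+R(t),
\end{align*}
with $R(t)=\beta\int_0^1 u^2(1-u^\delta)(u^\delta-\gamma)\,dx$.

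The decisive point is the choice of the feedback \eqref{eq6}. Substituting $u_x(0,t)=w_1(t)$ and $u_x(1,t)=w_2(t)$, the nonlinear $u^{\delta+1}$ parts of $w_1,w_2$ are engineered precisely to absorb the convective boundary flux $\frac{\alpha}{\delta+2}u^{\delta+2}$ coming from the line above, so that the surviving boundary contribution reduces to $-\nu(\eta_1+1)u^2(0,t)-\nu\eta_2 u^2(1,t)$. I expect the careful bookkeeping of these boundary terms — verifying that the convective flux is genuinely cancelled and that what remains is sign-definite for every $\delta\ge1$ — to be the main obstacle, and the single place where the exact algebraic form of \eqref{eq6} is used.

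Next I would control the reaction term. Expanding the cubic-type nonlinearity gives $R(t)=-\beta\|u^{\delta+1}\|_{\L^2}^2+\beta(1+\gamma)\int_0^1 u^{\delta+2}\,dx-\beta\gamma\|u\|_{\L^2}^2$. Since $\delta+2$ is the midpoint exponent between $2$ and $2\delta+2$, Young's inequality yields $|u|^{\delta+2}\le\frac{\varepsilon}{2}|u|^{2\delta+2}+\frac{1}{2\varepsilon}u^2$; the choice $\varepsilon=\frac{2}{1+\gamma}$ makes the resulting $u^{2\delta+2}$ contribution cancel the favourable $-\beta\|u^{\delta+1}\|_{\L^2}^2$ term, leaving $R(t)\le\big(\tfrac{\beta(1+\gamma)^2}{4}-\beta\gamma\big)\|u\|_{\L^2}^2=\tfrac{\beta(1-\gamma)^2}{4}\|u\|_{\L^2}^2$.

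Finally I would invoke Poincar\'e's inequality (Lemma \ref{lem3.1}) in the form $\|u\|_{\L^2}^2\le 2u^2(0,t)+2\|u_x\|_{\L^2}^2$, i.e.\ $-\nu\|u_x\|_{\L^2}^2\le-\tfrac{\nu}{2}\|u\|_{\L^2}^2+\nu u^2(0,t)$; the extra $\nu u^2(0,t)$ is exactly absorbed by the boundary term $-\nu(\eta_1+1)u^2(0,t)$ (this is why the coefficient $\eta_1+1$, rather than $\eta_1$, appears in $w_1$). Collecting the estimates and discarding the non-positive boundary contributions gives $\frac{d}{dt}V(t)\le-\big(\tfrac{\nu}{2}-\tfrac{\beta(1-\gamma)^2}{4}\big)\|u\|_{\L^2}^2=-2\sigma V(t)$, where $\sigma=\tfrac{\nu}{2}-\tfrac{\beta(1-\gamma)^2}{4}>0$ precisely under the hypothesis $\nu>\tfrac{\beta}{2}(1-\gamma)^2$. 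Gronwall's inequality then delivers $\|u(\cdot,t)\|_{\L^2}^2\le e^{-2\sigma t}\|u_0\|_{\L^2}^2$, which is the asserted $\L^2$-exponential stability.
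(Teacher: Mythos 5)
Your overall strategy is exactly the paper's: the same Lyapunov function $V(t)=\frac12\|u(t)\|_{\L^2}^2$, the same integration by parts, Poincar\'e's inequality (Lemma \ref{lem3.1}) used precisely to convert $-\nu\|u_x(t)\|_{\L^2}^2$ into $-\frac{\nu}{2}\|u(t)\|_{\L^2}^2+\nu u^2(0,t)$ with the extra $\nu u^2(0,t)$ absorbed by the ``$+1$'' in $w_1$, and your Young-inequality treatment of the reaction term is the paper's estimate with its parameter $\theta$ set equal to $1$ (both yield the coefficient $\frac{\beta(1+\gamma)^2}{4}-\beta\gamma=\frac{\beta(1-\gamma)^2}{4}$), followed by Gronwall. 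So the route is the same, and your decay rate is even marginally sharper since you keep the factor $\|u\|_{\L^2}^2=2V$.

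However, the one step you yourself singled out as decisive contains a sign error. At $x=0$ the cancellation is exact, as you say: $-\nu u(0,t)w_1(t)+\frac{\alpha}{\delta+2}u^{\delta+2}(0,t)=-\nu(\eta_1+1)u^2(0,t)$. But at $x=1$ the nonlinear part of $w_2$ in \eqref{eq6} carries the \emph{same} sign as the convective flux, not the opposite one, so the two terms add rather than cancel:
$\nu u(1,t)w_2(t)-\frac{\alpha}{\delta+2}u^{\delta+2}(1,t)=-\nu\eta_2u^2(1,t)-\frac{2\alpha}{\delta+2}u^{\delta+2}(1,t)$;
the flux is doubled, not removed. (The control that genuinely cancels it is $w_2(t)=-\eta_2u(1,t)+\frac{\alpha}{\nu(\delta+2)}u^{\delta+1}(1,t)$, which the remark following the theorem notes is not of the admissible feedback form \eqref{2p6}.) Your differential inequality therefore carries the extra term $-\frac{2\alpha}{\delta+2}u^{\delta+2}(1,t)$, which may be discarded only when $u^{\delta+2}(1,t)\geq 0$: automatic for even $\delta$, but for odd $\delta$ (e.g.\ the Burgers-Huxley case $\delta=1$) it requires $u(1,t)\geq 0$, i.e.\ some positivity information on the solution at the boundary. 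In fairness, the paper commits the same omission --- in passing from \eqref{eq1} to \eqref{35} it silently drops this very term --- so your conclusion holds exactly under the same unstated hypothesis as the paper's. Still, your claim of ``exact cancellation'' at $x=1$ is false as stated, and closing the argument rigorously requires either the sign condition $u^{\delta+2}(1,t)\geq 0$ or the modified control above.
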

	\begin{proof}
		Let $V(\cdot)$ be a  Liapunov function defined by
		\begin{align}\label{eq2}
			V(t)=\frac{1}{2}\int_0^1u^2(x,t)dx,\ t\geq 0. 
		\end{align}
		Taking the derivative with respect to $t$ and then performing an integration by parts, we deduce 
		\begin{align}\label{eq1}
			\dot{V}(t)&=\int_0^1u(x,t)u_t(x,t)dx\nonumber\\&=\int_0^1u(x,t)[\nu u_{xx}(x,t)-\alpha u^{\delta}(x,t)u_x(x,t)+\beta u(x,t)(1-u^{\delta}(x,t))(u^{\delta}(x,t)-\gamma)]dx\nonumber\\&=-\nu\int_0^1u_x^2(x,t)dx+\nu u(1,t)u_x(1,t)-\nu u(0,t)u_x(0,t)-\frac{\alpha}{\delta+2}\int_0^1(u^{\delta+2}(x,t))_xdx\nonumber\\&\quad+\beta(1+\gamma)\int_0^1u^{\delta+2}(x,t)dx-\beta\gamma\int_0^1u^2(x,t)dx-\beta\int_0^1u^{2\delta+2}(x,t)dx \nonumber\\&= -\nu\int_0^1u_x^2(x,t)dx+\nu u(1,t)w_2(t)-\nu u(0,t)w_1(t)-\frac{\alpha}{\delta+2}u^{\delta+2}(1,t)\nonumber\\&\quad+\frac{\alpha}{\delta+2}u^{\delta+2}(0,t)+\beta(1+\gamma)(u(t),u^{\delta+1}(t))-\beta\gamma\|u(t)\|_{\L^2}^2-\beta\|u(t)\|_{\L^{2(\delta+1)}}^{2(\delta+1)}\nonumber\\&\leq -\frac{\nu}{2}\int_0^1u^2(x,t)dx+\nu u^2(0,t)+\nu u(1,t)w_2(t)-\nu u(0,t)w_1(t)-\frac{\alpha}{\delta+2}u^{\delta+2}(1,t)\nonumber\\&\quad+\frac{\alpha}{\delta+2}u^{\delta+2}(0,t)+\left(\frac{\beta(1+\gamma)^2}{4\theta}-\beta\gamma\right)\|u(t)\|_{\L^2}^2-(1-\theta)\beta\|u(t)\|_{\L^{2(\delta+1)}}^{2(\delta+1)}\nonumber\\&\leq-\left(\frac{\nu}{2}+\beta\gamma-\frac{\beta(1+\gamma)^2}{4\theta}\right)V(t)-\nu u(0,t)\left\{w_1(t)-u(0,t)-\frac{\alpha}{\nu(\delta+2)}u^{\delta+1}(0,t)\right\}\nonumber\\&\quad+\nu u(1,t)\left\{w_2(t)-\frac{\alpha}{\nu(\delta+2)}u^{\delta+1}(1,t)\right\},
		\end{align}
		for $0<\theta\leq1$, where we have used H\"older's, Young's and Poincar\'e's inequalities (Lemma \ref{lem3.1}). If we apply the  control law given in \eqref{eq6},  then \eqref{eq1} reduces to 
		\begin{align}\label{35}
			\dot{V}(t)&\leq -\left(\frac{\nu}{2}+\beta\gamma-\frac{\beta(1+\gamma)^2}{4\theta}\right)V(t)-\nu(\eta_1u^2(0,t)+\eta_2u^2(1,t))\leq -\zeta V(t),
		\end{align}
		where 
		\begin{align}\label{eq3}
			\zeta=\frac{\nu}{2}+\beta\gamma-\frac{\beta(1+\gamma)^2}{4\theta}>0,
		\end{align} 
		for $0<\theta\leq 1$ and $\nu>\frac{\beta}{2}(1-\gamma)^2$. Thus we deduce  $V(t)\leq V(0)e^{-\zeta t},$ for all $t\geq 0$ and since $\zeta>0$,  $V(t)$ converges to zero exponentially as $t\to\infty$. 
	\end{proof}

	\begin{remark}
		One can use the control 
			$w_2(t)=-\eta_2u(1,t)+\frac{\alpha}{\nu(\delta+2)}u^{\delta+1}(1,t), \ \eta_2\geq 0$ also in \eqref{eq6}  to stabilize the problem \eqref{1}. But it is not in the form of the boundary data that we have considered in \eqref{2p6}. 
	\end{remark}

	We need  the following lemma to obtain asymptotic stability results for the forced GBH equation as well as non-adaptive control problem: 
	\begin{lemma}[Lemma 2, \cite{NS1}]\label{lem1}
		Let $\zeta>0$. For any $u\in \L^{p}(0,\infty)$, $p\in[1,\infty)$, we have 
		\begin{align}
			\int_0^te^{-\zeta(t-s)}|u^{p}(k,s)|ds\to 0\ \text{ as }\ t\to\infty,
		\end{align}
		where $k=0,1$. 
	\end{lemma}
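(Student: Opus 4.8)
The plan is to strip the statement down to a single analytic fact about exponentially weighted convolutions of integrable functions, after which the value of $p$ becomes irrelevant. Set $\phi(s):=|u(k,s)|^p$ for the fixed boundary point $k\in\{0,1\}$. The hypothesis $u(k,\cdot)\in\L^p(0,\infty)$ means precisely that $\int_0^\infty|u(k,s)|^p\,ds<\infty$, i.e. $\phi\in\L^1(0,\infty)$ with $\phi\geq 0$. Thus it suffices to prove that for every nonnegative $\phi\in\L^1(0,\infty)$ and every $\zeta>0$,
\[
	I(t):=\int_0^t e^{-\zeta(t-s)}\phi(s)\,ds\longrightarrow 0\quad\text{as }t\to\infty .
\]

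My primary route would be the dominated convergence theorem. Rewrite $I(t)=\int_0^\infty\psi_t(s)\,ds$ with $\psi_t(s):=e^{-\zeta(t-s)}\mathbf{1}_{[0,t]}(s)\,\phi(s)$. For each fixed $s\geq 0$, once $t>s$ the indicator equals $1$ and $e^{-\zeta(t-s)}\to 0$, so $\psi_t(s)\to 0$ pointwise as $t\to\infty$. Moreover $0\leq\psi_t(s)\leq\phi(s)$ for all $t$, and $\phi\in\L^1(0,\infty)$ supplies the required integrable dominating function, independent of $t$. Since the pointwise limit and the domination hold along every sequence $t_n\to\infty$, the dominated convergence theorem gives $I(t)\to 0$, which is the assertion.

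If one prefers a self-contained argument, I would instead use an $\varepsilon$-splitting. Given $\varepsilon>0$, integrability of $\phi$ lets me choose $T>0$ with $\int_T^\infty\phi(s)\,ds<\varepsilon/2$. For $t>T$, split $I(t)=\int_0^T e^{-\zeta(t-s)}\phi\,ds+\int_T^t e^{-\zeta(t-s)}\phi\,ds$. On $[T,t]$ the bound $e^{-\zeta(t-s)}\leq 1$ gives $\int_T^t e^{-\zeta(t-s)}\phi\,ds\leq\int_T^\infty\phi<\varepsilon/2$, uniformly in $t$; on $[0,T]$ the bound $e^{-\zeta(t-s)}\leq e^{-\zeta(t-T)}$ gives $\int_0^T e^{-\zeta(t-s)}\phi\,ds\leq e^{-\zeta(t-T)}\|\phi\|_{\L^1}$, which is $<\varepsilon/2$ once $t$ is large enough with $T$ fixed. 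Combining yields $I(t)<\varepsilon$ for all large $t$.

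The proof is elementary, so there is no genuine obstacle; the only point requiring attention is the order of the quantifiers in the split: the threshold $T$ must be fixed \emph{before} letting $t\to\infty$, since the slowly decaying part of the kernel (where $s$ is near $t$) may only be allowed to act on the integrable tail of $\phi$, while the part of $\phi$ that need not be small is annihilated by the exponential factor $e^{-\zeta(t-T)}$. In the dominated-convergence formulation this same care is absorbed, without further comment, into the single choice of the dominating function $\phi$.
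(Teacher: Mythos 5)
Your proof is correct. There is, however, no internal argument to compare it against: the paper states this lemma as an imported result (Lemma 2 of \cite{NS1}) and gives no proof of its own, so your write-up supplies details the authors chose to cite rather than prove. Both of your routes are sound. The reduction to the single claim that $t\mapsto\int_0^t e^{-\zeta(t-s)}\phi(s)\,ds$ vanishes at infinity for nonnegative $\phi\in\L^1(0,\infty)$, with $\phi=|u(k,\cdot)|^p$, is the correct reading of the hypothesis, and it rightly makes the exponent $p$ irrelevant. The dominated convergence argument is complete: the domination $0\le\psi_t\le\phi$ with $\phi$ integrable and the pointwise decay $\psi_t(s)\to 0$ are exactly what is needed, and since they hold along every sequence $t_n\to\infty$ the limit holds in the continuous variable. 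The $\varepsilon$-splitting is the standard elementary proof of such fading-memory estimates (a common variant takes the threshold at $t/2$ rather than a fixed $T$), and your remark about the order of quantifiers --- fixing $T$ from the integrability of $\phi$ \emph{before} sending $t\to\infty$, so that the exponential factor $e^{-\zeta(t-T)}$ kills the non-small part of $\phi$ while the kernel bound $e^{-\zeta(t-s)}\le 1$ handles only the integrable tail --- is precisely the point where a careless proof would fail. Either argument alone suffices.
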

	
	Let us now consider the following  forced GBH equation:
	\begin{align}\label{111}
		u_t(x,t)-\nu u_{xx}(x,t)+\alpha u^{\delta}(x,t)u_x(x,t)-\beta u(x,t)(1-u^{\delta}(x,t))(u^{\delta}(x,t)-\gamma)=f(x,t), 
	\end{align}
for $x\in[0,1]$, $t\geq 1$. The well-posedness of the forced GBH boundary value problem for $f\in\L^{\infty}((0,T)\times(0,1))$ can be obtained by using the similar arguments as in \cite[Theorem 3.1]{KIYY}. 	Moreover, we have the following asymptotic stability result:
\begin{theorem}\label{rem2.4}
	For $\nu>\frac{\beta}{2}(1-\gamma)^2,$  and $f\in \L^{2}(0,\infty;\L^2(0,1))$ or $\lim\limits_{t\to\infty}\sup\limits_{0\leq s\leq t}\|f(s)\|_{\L^2}=0$, 	the forced GBH equation \eqref{111} subject to \eqref{2} is asymptotically  stable in $\L^2(0,1)$.
\end{theorem}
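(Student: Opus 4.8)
The plan is to push the $\L^2$-energy estimate of Theorem \ref{thm2.3} through the forced equation \eqref{111} and then convert the resulting differential inequality into an asymptotic decay statement by a convolution argument. First I would reuse the Liapunov functional $V(t)=\frac{1}{2}\int_0^1u^2(x,t)dx$ and differentiate it along solutions of \eqref{111}. Substituting $u_t$ from \eqref{111} reproduces every term already handled in \eqref{eq1}, together with the single additional contribution $\int_0^1u(x,t)f(x,t)dx$ coming from the forcing. Since the boundary conditions are unchanged, the feedback law \eqref{eq6} cancels the boundary terms exactly as in Theorem \ref{thm2.3}, and I would arrive at
\begin{align*}
\dot V(t)\leq -\zeta V(t)+\int_0^1u(x,t)f(x,t)dx,
\end{align*}
where $\zeta$ is the constant from \eqref{eq3}, positive under the hypothesis $\nu>\frac{\beta}{2}(1-\gamma)^2$.

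Next I would bound the forcing term by the Cauchy--Schwarz and Young inequalities, $\int_0^1uf\,dx\leq\|u\|_{\L^2}\|f\|_{\L^2}\leq\frac{\zeta}{2}V(t)+\frac{1}{\zeta}\|f(t)\|_{\L^2}^2$, absorbing the first summand into the decay term to obtain
\begin{align*}
\dot V(t)\leq -\frac{\zeta}{2}V(t)+\frac{1}{\zeta}\|f(t)\|_{\L^2}^2.
\end{align*}
An application of Gronwall's inequality then gives
\begin{align*}
V(t)\leq V(0)e^{-\zeta t/2}+\frac{1}{\zeta}\int_0^te^{-\frac{\zeta}{2}(t-s)}\|f(s)\|_{\L^2}^2ds.
\end{align*}

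Finally I would let $t\to\infty$. The first term decays exponentially. For the convolution integral, under the hypothesis $f\in\L^2(0,\infty;\L^2(0,1))$ the scalar map $s\mapsto\|f(s)\|_{\L^2}$ belongs to $\L^2(0,\infty)$, so Lemma \ref{lem1}, applied with $p=2$ and decay rate $\zeta/2$ to this scalar function of time in place of a boundary trace (its proof uses only membership in $\L^p(0,\infty)$), forces the integral to zero. Under the alternative hypothesis that $\|f(s)\|_{\L^2}\to 0$ as $s\to\infty$, the same conclusion follows by splitting the convolution at a large time $T$: the tail over $[T,t]$ is controlled by the eventual smallness of $\|f(s)\|_{\L^2}^2$, while the head over $[0,T]$ is controlled by the exponentially small factor $e^{-\frac{\zeta}{2}(t-T)}$. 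In either case $V(t)\to 0$, which is the asserted $\L^2$-asymptotic stability.

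The computation is essentially routine, and I do not expect a genuine obstacle. The only steps needing care are verifying that the feedback \eqref{eq6} annihilates the boundary terms verbatim in the forced setting (which it does, being inherited unchanged from Theorem \ref{thm2.3}), and handling the convolution-decay step for the forcing under each of the two hypotheses on $f$ --- precisely the content encapsulated by Lemma \ref{lem1}.
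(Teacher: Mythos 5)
Your proposal is correct and follows essentially the same route as the paper: the same Liapunov functional $V$, the forcing term absorbed by Cauchy--Schwarz and Young's inequality into the decay rate, the variation-of-constants (Gronwall) formula, and Lemma \ref{lem1} applied to the scalar function $s\mapsto\|f(s)\|_{\L^2}$ to kill the convolution term. The only cosmetic difference is that you absorb the cross term into the full rate $\zeta$ (yielding decay $\zeta/2$) whereas the paper absorbs it into the viscous part via a second parameter $\hat\theta$, and you spell out the splitting argument for the second hypothesis on $f$, which the paper leaves implicit.
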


	\begin{proof}
A similar calculation as we have performed  in \eqref{35} yields 
		\begin{align*}
			\dot{V}(t)\leq -\left(\frac{(1-\hat\theta)\nu}{2}+\beta\gamma-\frac{\beta(1+\gamma)^2}{4\theta}\right)V(t)+\frac{1}{2\hat\theta\nu}\|f(t)\|_{\L^2}^2,
		\end{align*}
		for $0<\theta,\hat\theta<1$. Taking $\hat{\zeta}=\left(\frac{(1-\hat\theta)\nu}{2}+\beta\gamma-\frac{\beta(1+\gamma)^2}{4\hat\theta}\right)>0$, and applying  of the variation of constants formula, we arrive at 
		\begin{align}
			V(t)\leq V(0)e^{-\hat\zeta t}+\frac{1}{2\hat\theta\nu}\int_0^te^{-\hat\zeta (t-s)}\|f(s)\|_{\L^2}^2ds.
		\end{align}
		For $\nu>\frac{\beta}{2}(1-\gamma)^2$ and $f\in \L^{2}(0,\infty;\L^2(0,1))$ or $\lim\limits_{t\to\infty}\sup\limits_{0\leq s\leq t}\|f(s)\|_{\L^2}=0$, using Lemma \ref{lem1}, one can obtain the asymptotic stability. 
	\end{proof}

	\begin{remark}\label{rem2.5}
		For Dirichlet's boundary control:
		\begin{align*}
			u(0,t)=\tilde w_1(t)\ \text{ and }\ u(1,t)=\tilde w_2(t),
		\end{align*}
		one can design a control law for the exponential stability in $\L^2(0,1)$ as a solution for the following system: 
		\begin{equation*}
			\left\{
			\begin{aligned}
				&\tilde w_1(t)=\left\{\frac{\nu(\delta+2)}{\alpha}\left(u_x(0,t)-(1+\tilde\eta_1)\tilde w_1(t)\right)\right\}^{\frac{1}{\delta+1}},\ \tilde\eta_1\geq 0, \\
				&\tilde w_2(t)=\left\{\frac{\nu(\delta+2)}{\alpha}\left[\tilde\eta_2\tilde w_2(t)+u_x(1,t)\right]\right\}^{\frac{1}{\delta+1}}, \tilde\eta_2\geq 0. 
			\end{aligned}\right.
		\end{equation*}
	\end{remark}

	Let us now state and prove a general version of Theorem \ref{thm2.3}. 
	\begin{theorem}\label{thm3.6}
Global exponential stability in the $\L^q(0,1)$ sense holds for the problem \eqref{1}-\eqref{2}, provided $\nu>\frac{\beta p}{2(2p-1)}(1-\gamma)^2$,  that is, for any $q\in[2,\infty)$, there exists $\bar{\zeta}(q)$ such that 
		\begin{align}\label{316}
			\|u(t)\|_{\L^q}\leq \|u_0\|_{\L^q}e^{-\bar{\zeta} t}, \ \text{ for all }\ t\geq 0. 
		\end{align}
	\end{theorem}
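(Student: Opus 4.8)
The plan is to mimic the $\L^2$-energy computation from Theorem \ref{thm2.3}, but with the Lyapunov functional $V_q(t)=\frac1q\int_0^1|u(x,t)|^q\,dx=\frac1q\|u(t)\|_{\L^q}^q$ in place of the quadratic energy. Differentiating in $t$ and using the equation \eqref{1}, I would write $\dot V_q(t)=\int_0^1 u^{q-1}u_t\,dx$ (working formally with $u>0$ or with $|u|^{q-2}u$ in general) and integrate by parts the three terms on the right. The viscous term produces $\nu\int_0^1(u^{q-1})_x u_x\,dx=\nu(q-1)\int_0^1 u^{q-2}u_x^2\,dx$ together with boundary contributions $\nu u^{q-1}u_x\big|_0^1$; the reaction term gives $\beta\int_0^1 u^{q-1}\,u(1-u^\delta)(u^\delta-\gamma)\,dx$; and the convection term $-\alpha\int_0^1 u^{q-1}u^\delta u_x\,dx$ becomes a perfect derivative $-\frac{\alpha}{q+\delta}\int_0^1(u^{q+\delta})_x\,dx$, hence a pure boundary term $-\frac{\alpha}{q+\delta}\,u^{q+\delta}\big|_0^1$.

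Next I would insert the control law \eqref{eq6} to absorb the boundary terms. The key structural point, exactly as in \eqref{eq1}--\eqref{35}, is that the feedback is engineered so that the boundary contributions from the viscous and convective terms combine with $w_1,w_2$ to leave only nonpositive quantities $-\nu\eta_1 u^q(0,t)-\nu\eta_2 u^q(1,t)$ (plus a controllable boundary remainder handled by a Poincaré-type estimate applied to $|u|^{q/2}$). For the interior reaction term I would expand $u^{q-1}\cdot u(1-u^\delta)(u^\delta-\gamma)=(1+\gamma)u^{q+\delta}-\gamma u^q-u^{q+2\delta}$ and bound the middle-degree term $(1+\gamma)u^{q+\delta}$ against the coercive good terms $u^q$ and $u^{q+2\delta}$ via Young's inequality with an interpolation exponent; this is where the sharper threshold $\nu>\frac{\beta p}{2(2p-1)}(1-\gamma)^2$ enters, with $p$ presumably the conjugate/interpolation parameter tied to $q$. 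Combining the viscous coercivity $\nu(q-1)\int u^{q-2}u_x^2\,dx=\frac{4\nu(q-1)}{q^2}\|(|u|^{q/2})_x\|_{\L^2}^2$ with Poincaré's inequality (Lemma \ref{lem3.1}) applied to $|u|^{q/2}$ yields a differential inequality $\dot V_q(t)\le-q\bar\zeta\,V_q(t)$ for a suitable $\bar\zeta=\bar\zeta(q)>0$.

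From $\dot V_q\le -q\bar\zeta V_q$ Gronwall gives $V_q(t)\le V_q(0)e^{-q\bar\zeta t}$, i.e. $\|u(t)\|_{\L^q}^q\le\|u_0\|_{\L^q}^q e^{-q\bar\zeta t}$, which is exactly \eqref{316} after taking $q$-th roots. I would note that for $q=2$ this recovers Theorem \ref{thm2.3}, so the bookkeeping with $\theta$ (the Young's-inequality split parameter) should specialize correctly, giving a sanity check on the constants.

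The main obstacle I anticipate is twofold. First, the rigorous justification of differentiating $\|u(t)\|_{\L^q}^q$ and of the integration-by-parts manipulations for general $q$ and $\delta\ge1$: one must verify the solution is regular enough (which the strong/classical solution theory of Theorems \ref{thm2.6} and \ref{thm2.2} should supply) and handle the non-smoothness of $|u|^{q-2}u$ when $u$ changes sign, typically via the maximum-principle bound $\|u(t)\|_{\L^\infty}\le\|u_0\|_{\L^\infty}$ from Theorem \ref{thm25} or a regularized approximation $(u^2+\epsilon)^{q/2}$. Second, and more delicate, is getting the Poincaré constant and the Young's-inequality balancing to interact so that the required viscosity threshold comes out as the stated $\frac{\beta p}{2(2p-1)}(1-\gamma)^2$ uniformly over the relevant $q$; tracking how $\|(|u|^{q/2})_x\|_{\L^2}$, the $q$-dependent factor $\frac{4(q-1)}{q^2}$, and the boundary term from Poincaré combine to produce a strictly positive decay rate $\bar\zeta(q)$ is the computational heart of the argument.
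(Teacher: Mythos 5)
Your overall strategy is the paper's strategy: an $\L^q$-type Lyapunov functional, integration by parts, boundary terms absorbed by the feedback, Poincar\'e's inequality (Lemma \ref{lem3.1}) for coercivity, Young's inequality on the middle-degree reaction term $u^{q+\delta}$, and a Gronwall-type conclusion. The paper sidesteps the $|u|^{q-2}u$ smoothness issue you flag by working only with even exponents $q=2p$ (so the $p$ in the hypothesis is simply $q/2$, not a conjugate or interpolation parameter) and then deducing \eqref{316} for general $q\in[2,\infty)$ from the $\L^{2p}$ decay \eqref{314}; your regularization route would also work but is extra effort the paper never needs.

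There is, however, a genuine gap at what you call the ``key structural point'': you propose to close the estimate with the $\L^2$-tuned control law \eqref{eq6} and assert that the boundary contributions reduce to $-\nu\eta_1u^q(0,t)-\nu\eta_2u^q(1,t)$. That cancellation fails for $q\neq 2$, and the paper's proof in fact \emph{designs a new, $q$-dependent control law} \eqref{2p10}. The reason is that both gains must be retuned to the exponent. The convective flux now exits through the boundary as $-\frac{\alpha}{2p+\delta}u^{2p+\delta}(1,t)+\frac{\alpha}{2p+\delta}u^{2p+\delta}(0,t)$, so exact cancellation at $x=0$ requires the nonlinear gain $\frac{\alpha}{\nu(2p+\delta)}$; with the gain $\frac{\alpha}{\nu(\delta+2)}$ from \eqref{eq6} you are left with a remainder $\alpha\left(\frac{1}{2p+\delta}-\frac{1}{\delta+2}\right)u^{2p+\delta}(0,t)$, which is of indefinite sign when $2p+\delta$ is odd and $u(0,t)<0$, is of higher degree than the available good term $-\nu(\eta_1+1)u^{2p}(0,t)$, and therefore cannot be absorbed. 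Similarly, Poincar\'e's inequality applied at the level of $u^p$ produces a boundary remainder proportional to $\frac{2p-1}{p}\nu u^{2p}(0,t)$ (in the paper's bookkeeping in \eqref{312}), which is why the linear gain in $w_1$ must be raised from $\eta_1+1$ to $\hat\eta_1+\frac{2p-1}{p}$. So the missing idea is that Theorem \ref{thm3.6} is a control-\emph{design} statement, one new feedback law per exponent --- which is precisely why the paper remarks afterwards that the control \eqref{2p10} with $2p=q$ stabilizes every $\L^r$-norm for $2\leq r\leq q$, rather than claiming the single law \eqref{eq6} works for all $q$.
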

	\begin{proof}
		If one considers a Liapunov function 
	\begin{align}\label{2p9}
		\tilde{V}(t)=\frac{1}{2p}\int_0^1u^{2p}(x,t)dx=\frac{1}{2p}\|u(t)\|_{\L^{2p}}^{2p},
	\end{align}
	then, we have  for all $t\geq 0$
	\begin{align}\label{312}
		\dot{\tilde{V}}(t)&=\int_0^1u^{2p-1}(x,t)u_t(x,t)dx\nonumber\\&=\int_0^1u^{2p-1}(x,t)[\nu u_{xx}(x,t)-\alpha u^{\delta}(x,t)u_x(x,t)+\beta u(x,t)(1-u^{\delta}(x,t))(u^{\delta}(x,t)-\gamma)]dx\nonumber\\&=-(2p-1)\nu\int_0^1u^{2p-2}(x,t)u_x^2(x,t)dx+\nu u^{2p-1}(1,t)u_x(1,t)-\nu u^{2p-1}(0,t)u_x(0,t)\nonumber\\&\quad-\frac{\alpha}{2p+\delta}u^{2p+\delta}(1,t)+\frac{\alpha}{2p+\delta}u^{2p+\delta}(0,t)+\beta(1+\gamma)(u^{2p-1}(t),u^{\delta+1}(t))\nonumber\\&\quad-\beta\gamma\|u(t)\|_{\L^{2p}}^{2p}-\beta\|u(t)\|_{\L^{2(\delta+p)}}^{2(\delta+p)}\nonumber\\&\leq -\left(\frac{\nu (2p-1)}{2p}+\beta\gamma-\frac{\beta(1+\gamma)^2}{4}\right)\int_0^1u^{2p}(x,t)dx\nonumber\\&\quad-\nu u^{2p-1}(0,t) \left( w_1(t)-\frac{(2p-1)}{p}u(0,t)-\frac{\alpha}{\nu(2p+\delta)}u^{\delta+1}(0,t)\right)\nonumber\\&\quad+\nu u^{2p-1}(1,t)\left(w_2(t)-\frac{\alpha}{\nu(2p+\delta)}u^{\delta+1}(1,t)\right),
	\end{align}
	where we have used Poincar\'e's inequality (Lemma \ref{lem3.1}). 	By choosing the boundary control law  as
	\begin{equation}\label{2p10}
		\left\{
		\begin{aligned}
			w_1(t)&=\left(\hat\eta_1+\frac{(2p-1)}{p}\right)u(0,t)+\frac{\alpha}{\nu(2p+\delta)}u^{\delta+1}(0,t),\ \hat\eta_1\geq 0, \\
			w_2(t)&=-\left[\hat\eta_2u(1,t)+\frac{\alpha}{\nu(2p+\delta)}u^{\delta+1}(1,t)\right], \ \hat\eta_2\geq 0,
		\end{aligned}
		\right.
	\end{equation}
	in \eqref{2p9}, we find 
	\begin{align}
		\frac{d}{d t}\|u(t)\|_{\L^{2p}}^{2p}\leq -2p\left(\frac{\nu (2p-1)}{2p}+\beta\gamma-\frac{\beta(1+\gamma)^2}{4}\right)\|u(t)\|_{\L^{2p}}^{2p},
	\end{align}
	for all $t\geq 0$.	Therefore, for $\nu>\frac{\beta p}{2(2p-1)}(1-\gamma)^2$, one  obtains \begin{align}\label{314}\|u(t)\|_{\L^{2p}}\leq\|u_0\|_{\L^{2p}}e^{-\bar\zeta t},\end{align}  where $\bar{\zeta}=\left(\frac{\nu (2p-1)}{2p}-\frac{\beta(1-\gamma)^2}{4}\right)>0.$ The estimate \eqref{316} follows from \eqref{314}.  Thus, the solution $u\equiv 0$ is globally exponentially stable in an $\L^q$-sense, where $q\in[2,\infty)$.  
	\end{proof}

		\begin{remark}
			1. Note that the control given in \eqref{2p10} for $2p=q$  exponentially stabilizes any $\L^r$-norm for any $2\leq r\leq q$.

		2. Letting $p\to\infty$ in \eqref{314}, one can deduce that 
		\begin{align}
			\esssup_{x\in[0,1]}|u(x,t)|\leq\esssup_{x\in[0,1]}|u(x,0)|, \ \text{ for all }\ t\geq 0. 
		\end{align}
		But the the above result is not particularly useful for two reasons:
		\begin{enumerate}
			\item [(1)] The above estimate does not guarantee convergence to zero, that is, it
			guarantees stability but not asymptotic stability.
			\item [(2)] With $\esssup$, we cannot guarantee boundedness for all (but only for almost all) $x\in[0,1]$ without further effort to demonstrate continuity.
		\end{enumerate} 
	\end{remark}

		\begin{theorem}\label{thm3.8}
			Global exponential stability in the $\H^1$-norm 	sense holds for the problem \eqref{1}-\eqref{2}, that is,  for $\nu>\frac{2p}{2p-1}\beta(1+\gamma^2)$, 
				\begin{align}\label{3p17}
			\|u(t)\|_{\H^1}^2=	\|u(t)\|_{\L^2}^2+	\|u_x(t)\|_{\L^2}^2\leq C\left(\varrho,\hat\eta_1,\hat\eta_2,\nu,\alpha,\beta,\gamma,\delta,\|(u_0)_x\|_{\L^2}\right)e^{-\frac{\varrho}{2} t},
				\end{align}
				for all $t\geq 0$ and $u_0\in\H^1(0,1)$, where 
				\begin{align}\label{3p18}
					\varrho= 2p\left(\frac{\nu(2p-1)}{4p}-\frac{\beta(1+\gamma^2)}{2}\right)>0\ \text{ and }\ p=\delta+1.
				\end{align}
		\end{theorem}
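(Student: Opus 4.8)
The plan is to treat this as an $\H^1$-energy estimate that is layered on top of the $\L^q$-decay already proved in Theorem \ref{thm3.6}. Two facts from the earlier analysis are the essential inputs. First, with the choice $p=\delta+1$, Theorem \ref{thm3.6} gives $\|u(t)\|_{\L^{2(\delta+1)}}\le\|u_0\|_{\L^{2(\delta+1)}}e^{-\bar\zeta t}$ under the stated control law \eqref{2p10}, and hence exponential decay of every lower $\L^q$-norm. Second, the maximum-principle bound of Theorem \ref{thm25} supplies the uniform (in time) control $\|u(t)\|_{\L^\infty}\le M:=\|u_0\|_{\L^\infty}$. Together these are what let the nonlinear terms be treated as exponentially small forcing or be absorbed into the constant, rather than as obstructions to dissipation. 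Since the conclusion concerns $\|u_x(t)\|_{\L^2}^2$, I would run an energy estimate for $W(t):=\tfrac12\|u_x(t)\|_{\L^2}^2$ and then add the $\L^2$-decay of $\|u(t)\|_{\L^2}^2$ already available from Theorem \ref{thm2.3}.

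First I would test the equation \eqref{1} against $-u_{xx}$ (equivalently, differentiate \eqref{1} in $x$ and test against $u_x$), which gives
\begin{equation*}
\dot W(t)=\big[u_t\,u_x\big]_{x=0}^{x=1}-\nu\|u_{xx}(t)\|_{\L^2}^2+\alpha\int_0^1 u^{\delta}u_x u_{xx}\,\d x-\beta\int_0^1 u(1-u^{\delta})(u^{\delta}-\gamma)u_{xx}\,\d x,
\end{equation*}
so that the leading dissipation is $-\nu\|u_{xx}\|_{\L^2}^2$. Expanding the reaction term as $(1+\gamma)u^{\delta+1}-u^{2\delta+1}-\gamma u$, integrating by parts, and using $|u|\le M$, it splits into the genuinely dissipative pieces (notably one proportional to $-\beta(2\delta+1)\int u^{2\delta}u_x^2$, where $2\delta+1=2p-1$) together with the single indefinite piece $\beta(1+\gamma)(\delta+1)\int u^{\delta}u_x^2$ and pieces of the form $\int u^{m}u_{xx}$ that Cauchy--Schwarz/Young turn into a fraction of $\nu\|u_{xx}\|_{\L^2}^2$ plus $\|u\|_{\L^{2m}}^{2m}$-norms, all of which decay exponentially by Theorem \ref{thm3.6}. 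Controlling the indefinite piece against the dissipative one by Young's inequality --- in the spirit of the pointwise bound $-ub(u,p)\le\frac{\alpha^2}{\beta}|p|^2+\frac{\beta}{2}(1+\gamma^2)u^2$ recorded just before Theorem \ref{thm2.2} --- is what produces the factor $(1+\gamma^2)$, and balancing it against the $(2p-1)$-dissipation is what produces the threshold $\nu>\frac{2p}{2p-1}\beta(1+\gamma^2)$ and the rate constant $\varrho$ of \eqref{3p18}.

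The convection term $\alpha\int_0^1 u^{\delta}u_x u_{xx}\,\d x$ I would bound by $\tfrac{\nu}{4}\|u_{xx}\|_{\L^2}^2+\tfrac{\alpha^2 M^{2\delta}}{\nu}\|u_x\|_{\L^2}^2$ using $|u|\le M$; its $\|u_x\|_{\L^2}^2$-contribution does not spoil the rate because it is pushed into the prefactor $C$ (which \eqref{3p17} explicitly allows to depend on $\alpha$). The boundary term $\big[u_t u_x\big]_0^1$ is rewritten through the feedback law \eqref{2p10}: there $u_x(k,t)$ is an explicit polynomial in the trace $u(k,t)$, and differentiating in $t$ expresses $u_t(k,t)$ through the traces as well, so the whole contribution is polynomial in $u(0,t),u(1,t)$; combining Agmon's inequality (Lemma \ref{lem21}) with the decay of $\|u(t)\|_{\L^{2(\delta+1)}}$ shows these traces are exponentially small. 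Finally I would invoke Poincar\'e's inequality (Lemma \ref{lem3.1}) applied to $u_x$, i.e. $\|u_{xx}\|_{\L^2}^2\ge\tfrac12\|u_x\|_{\L^2}^2-u_x^2(0)$, to convert the remaining $u_{xx}$-dissipation into genuine $W$-dissipation, at the cost of one further boundary term of the same exponentially small type. Collecting everything yields a differential inequality $\dot W(t)\le-\varrho W(t)+h(t)$ with $h(t)\le Ce^{-\mu t}$ for some $\mu>0$ governed by the $\L^{2(\delta+1)}$- and $\L^{2(2\delta+1)}$-norms of $u$ and by the traces.

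Integrating this inequality by the variation-of-constants formula gives $W(t)\le W(0)e^{-\varrho t}+\int_0^t e^{-\varrho(t-s)}h(s)\,\d s$; because the homogeneous rate $\varrho$ and the forcing rate $\mu$ are comparable, the convolution carries a harmless polynomial prefactor, which is exactly why the clean statement is recorded with the halved exponent $e^{-\varrho t/2}$. Adding the resulting bound for $\|u_x(t)\|_{\L^2}^2$ to the $\L^2$-decay of $\|u(t)\|_{\L^2}^2$ delivers \eqref{3p17}, with $C$ depending on $\|(u_0)_x\|_{\L^2}$ through $W(0)$. I expect the main obstacle to be the boundary contribution $\big[u_t u_x\big]_0^1$: unlike the $\L^2$- and $\L^{2p}$-estimates, where the feedback \eqref{2p10} was designed to annihilate the boundary flux outright, here the extra time-derivative on the trace forces one to control $\tfrac{d}{dt}u(k,t)$ and to verify that it inherits the exponential decay; the interior nonlinear terms, by contrast, become routine once the uniform $\L^\infty$-bound and the $\L^{2p}$-decay are in hand.
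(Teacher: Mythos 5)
Your opening move (testing \eqref{1} against $-u_{xx}$) and your diagnosis of where the threshold $\nu>\frac{2p}{2p-1}\beta(1+\gamma^2)$ comes from both match the paper, but the two places where you depart from it are exactly the places where your argument breaks. The main gap is the boundary term $\big[u_t u_x\big]_0^1$, which you correctly flag as the crux but then resolve incorrectly. The feedback law \eqref{2p10} expresses $u_x(k,t)$ as a polynomial in the trace $u(k,t)$; differentiating it in time produces $u_{xt}(k,t)$ in terms of $u_t(k,t)$, not the other way around, so it does \emph{not} express $u_t(k,t)$ "through the traces as well." The only other route to $u_t(k,t)$ is the PDE itself evaluated at $x=k$, which drags in the uncontrolled second-derivative trace $u_{xx}(k,t)$. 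Your fallback claim that the traces are exponentially small via Agmon's inequality is also circular: Agmon needs a uniform-in-time $\H^1$ bound, which is precisely what Theorem \ref{thm3.8} is supposed to deliver (Theorem \ref{thm2.6} guarantees no blow-up but, as the remark following it stresses, gives no explicit bound). The paper never estimates $u_t$ at the boundary at all: since $u_x(k,t)$ is a polynomial in $u(k,t)$, the products $u_t(k,t)u_x(k,t)$ are \emph{exact time derivatives} of polynomials in the traces, and they are absorbed into the augmented Lyapunov functional $\Psi(t)=\|u_x(t)\|_{\L^2}^2+\frac{\hat\eta_2}{2}u^2(1,t)+\frac{\alpha}{\nu(2p+\delta)(\delta+2)}u^{\delta+2}(1,t)+\left(\frac{\hat\eta_1}{2}+\frac{2p-1}{2p}\right)u^2(0,t)+\frac{\alpha}{\nu(2p+\delta)(\delta+2)}u^{\delta+2}(0,t)$, after which the whole estimate is run on $\Psi$ and the traces are recovered at the end (with Poincar\'e supplying the $\|u\|_{\L^2}^2$ part of \eqref{3p17}).

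The second gap is the convection term. Your bound $\alpha\int_0^1 u^\delta u_x u_{xx}\,\d x\leq\frac{\nu}{4}\|u_{xx}\|_{\L^2}^2+\frac{\alpha^2M^{2\delta}}{\nu}\|u_x\|_{\L^2}^2$ leaves a term equal to $\frac{2\alpha^2M^{2\delta}}{\nu}W(t)$ on the right of $\dot W\leq-\varrho W+\cdots$; this cannot be "pushed into the prefactor $C$" — it competes head-on with the dissipation and wipes out the decay rate whenever $\alpha$ or $M=\|u_0\|_{\L^\infty}$ is large (and the uniform $\L^\infty$ bound from Theorem \ref{thm25} is itself delicate here, since the feedback nonlinearities in \eqref{2p10} need not be globally nondecreasing). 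The paper instead keeps the quantity $\|u^\delta(t)u_x(t)\|_{\L^2}^2$ intact on the right-hand side and controls its exponentially weighted time integral, $\int_0^te^{\varrho s}\|u^\delta(s)u_x(s)\|_{\L^2}^2\,\d s\leq C\|u_0\|_{\L^{2(\delta+1)}}^{2(\delta+1)}$, by re-running the $\L^{2p}$ energy identity with $p=\delta+1$ and retaining its dissipation terms — the same device handles $\|u\|_{\L^{2(\delta+1)}}^{2(\delta+1)}$ and the trace contributions. This avoidance of both Gronwall and pointwise-in-time absorption is what makes the result \emph{global} (rather than semi-global) in $\H^1$, which the paper explicitly contrasts with \cite{ABMK}. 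In short: your convolution/variation-of-constants skeleton is fine, but without the augmented functional $\Psi$ and the weighted dissipation integrals, neither the forcing $h(t)$ nor the boundary terms in your differential inequality can actually be constructed.
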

		\begin{proof}
		Let us now take the $\L^2$-inner product of \eqref{1} with $-u_{xx}$ to infer 
		\begin{align}\label{317}
			-&\int_0^1u_t(x,t)u_{xx}(x,t)dx+\nu\int_0^1u_{xx}^2(x,t)dx\nonumber\\&=\alpha \int_0^1u^{\delta}(x,t)u_x(x,t)u_{xx}(x,t)dx-\beta(1+\gamma)\int_0^1u^{\delta+1}(x,t)u_{xx}(x,t)dx\nonumber\\&\quad+\beta\gamma\int_0^1u(x,t)u_{xx}(x,t)dx+\beta\int_0^1u^{2\delta+1}(x,t)u_{xx}(x,t)dx,
		\end{align}
		for a.e. $t\geq 0$. 
		 The first term in the left hand side of \eqref{317} can be estimated as follows: 
		\begin{align}\label{318}
			-\int_0^1u_t(x,t)u_{xx}(x,t)dx&=-u_t(x,t)u_x(x,t)\big|_0^1+\int_0^1u_{xt}(x,t)u_x(x,t)dx\nonumber\\&=\frac{1}{2}\frac{d}{dt}\|u_x(t)\|_{\L^2}^2+u_t(1,t)\left[\hat\eta_2u(1,t)+\frac{\alpha}{\nu(2p+\delta)}u^{\delta+1}(1,t)\right]\nonumber\\&\quad+u_t(0,t)\left[\left(\hat\eta_1+\frac{(2p-1)}{p}\right)u(0,t)+\frac{\alpha}{\nu(2p+\delta)}u^{\delta+1}(0,t)\right]\nonumber\\&=\frac{1}{2}\frac{d}{dt}\bigg[\|u_x(t)\|_{\L^2}^2+\frac{\hat{\eta}_2}{2}u^2(1,t)+\frac{\alpha}{\nu(2p+\delta)(\delta+2)}u^{\delta+2}(1,t)\nonumber\\&\quad+\left(\frac{\hat\eta_1}{2}+\frac{(2p-1)}{2p}\right)u^2(0,t)+\frac{\alpha}{\nu(2p+\delta)(\delta+2)}u^{\delta+2}(0,t)\bigg].
		\end{align}
		Using H\"older's and Young's inequalities, we estimate the first and second terms from the right hand side of \eqref{317} as 
		\begin{align}
			\alpha \int_0^1u^{\delta}(x,t)u_x(x,t)u_{xx}(x,t)dx&\leq\|u_{xx}(t)\|_{\L^2}\|u^{\delta}(t)u_x(t)\|_{\L^2}\nonumber\\&\leq\frac{\nu}{4}\|u_{xx}(t)\|_{\L^2}^2+\frac{\alpha^2}{\nu}\|u^{\delta}(t)u_x(t)\|_{\L^2}^2\nonumber\\\beta(1+\gamma)\int_0^1u^{\delta+1}(x,t)u_{xx}(x,t)dx&\leq\beta(1+\gamma)\|u_{xx}(t)\|_{\L^2}\|u(t)\|_{\L^{2(\delta+1)}}^{\delta+1}\nonumber\\&\leq\frac{\nu}{4}\|u_{xx}(t)\|_{\L^2}^2+\frac{\beta^2(1+\gamma)^2}{\nu}\|u(t)\|_{\L^{2(\delta+1)}}^{2(\delta+1)}. 
		\end{align}
		We estimate the final two terms from the right hand side of \eqref{317} by performing an integration by parts as 
	\begin{align}
	\int_0^1u(x,t)u_{xx}(x,t)dx&=\left[u(x,t)u_{x}(x,t)\big|_0^1-\int_0^1u_{x}^2(x,t)dx\right]\nonumber\\&=-\bigg\{u(1,t)\left[\hat\eta_2u(1,t)+\frac{\alpha}{\nu(2p+\delta)}u^{\delta+1}(1,t)\right]\nonumber\\&\quad+u(0,t)\left[\left(\hat\eta_1+\frac{(2p-1)}{p}\right)u(0,t)+\frac{\alpha}{\nu(2p+\delta)}u^{\delta+1}(0,t)\right]\nonumber\\&\quad+\int_0^1u_{x}^2(x,t)dx\bigg\}\leq 0,\\
		\int_0^1u^{2\delta+1}(x,t)u_{xx}(x,t)dx&=\left[u2\delta+1(x,t)u_{x}(x,t)\big|_0^1-(2\delta+1)\int_0^1u^{2\delta}(x,t)u_{x}^2(x,t)dx\right]\nonumber\\&=-\bigg\{u^{2\delta+1}(1,t)\left[\hat\eta_2u(1,t)+\frac{\alpha}{\nu(2p+\delta)}u^{\delta+1}(1,t)\right]\nonumber\\&\quad+u^{2\delta+1}(0,t)\left[\left(\hat\eta_1+\frac{(2p-1)}{p}\right)u(0,t)+\frac{\alpha}{\nu(2p+\delta)}u^{\delta+1}(0,t)\right]\nonumber\\&\quad+\int_0^1u^{2\delta}(x,t)u_{x}^2(x,t)dx\bigg\}\leq 0.\label{321}
	\end{align}
	Combining \eqref{318}-\eqref{321} and then substituting it in \eqref{317}, we find 
	\begin{align}\label{322}
		\frac{1}{2}\frac{d}{dt}\Psi(t)+\frac{\nu}{2}\|u_{xx}(t)\|_{\L^2}^2+\beta\|u^{\delta}(t)u_x(t)\|_{\L^2}^2\leq \frac{\alpha^2}{\nu}\|u^{\delta}(t)u_x(t)\|_{\L^2}^2 +\frac{\beta^2(1+\gamma)^2}{\nu}\|u(t)\|_{\L^{2(\delta+1)}}^{2(\delta+1)},
	\end{align}
for a.e. $t\geq 0$,	where 
	\begin{align}\label{3p23}
		\Psi(t)&=\|u_x(t)\|_{\L^2}^2+\frac{\hat{\eta}_2}{2}u^2(1,t)+\frac{\alpha}{\nu(2p+\delta)(\delta+2)}u^{\delta+2}(1,t)\nonumber\\&\quad+\left(\frac{\hat\eta_1}{2}+\frac{(2p-1)}{2p}\right)u^2(0,t)+\frac{\alpha}{\nu(2p+\delta)(\delta+2)}u^{\delta+2}(0,t).
	\end{align}
	We  multiply \eqref{322} by $e^{\frac{\varrho}{2} t}$ to deduce 
	\begin{align}\label{323}
	&	\frac{d}{dt}[e^{\frac{\varrho}{2} t}\Psi(t)]+\nu e^{\frac{\varrho}{2} t}\|u_{xx}(t)\|_{\L^2}^2+2\beta e^{\frac{\varrho}{2} t}\|u^{\delta}(t)u_x(t)\|_{\L^2}^2\nonumber\\&\leq \frac{\varrho}{2} e^{\frac{\varrho}{2} t}\Psi(t)+\frac{2\alpha^2}{\nu}e^{\frac{\varrho}{2} t}\|u^{\delta}(t)u_x(t)\|_{\L^2}^2 +\frac{2\beta^2(1+\gamma)^2}{\nu}e^{\frac{\varrho}{2} t}\|u(t)\|_{\L^{2(\delta+1)}}^{2(\delta+1)},
	\end{align}
where $\varrho$ is defined in \eqref{3p18}. 	Since, we have the embedding $\W^{1,p}(0,1)\hookrightarrow\C([0,1])$ for any $1\leq p<\infty$ (Remark \ref{rem2.2}), one can estimate 
	\begin{align*}
		u^2(1,t)&\leq\max\limits_{x\in[0,1]}|u^2(x,t)|\leq C_a(\|u(t)\|_{\L^2}^2+\|u_x(t)\|_{\L^2}^2),\\
		u^{\delta+2}(1,t)&\leq\max\limits_{x\in[0,1]}|u^{\delta+2}(x,t)|\leq C_b\|u^{\delta+2}(t)\|_{\W^{1,1}}=C_b\|u^{\delta+2}(t)\|_{\L^1}+C_b\|(u^{\delta+2}(t))_x\|_{\L^1}\nonumber\\&\leq C_b\|u(t)\|_{\L^{\delta+2}}^{\delta+2}+C_b(\delta+2)\|u^{\delta+1}(t)u_x(t)\|_{\L^1}\nonumber\\&\leq C_b\|u(t)\|_{\L^{\delta+2}}^{\delta+2}+C_b(\delta+2)\|u^{\delta}(t)u_x(t)\|_{\L^{2}}\|u_x(t)\|_{\L^2}\nonumber\\&\leq\beta\|u^{\delta}(t)u_x(t)\|_{\L^{2}}^2+C_b\|u(t)\|_{\L^{\delta+2}}^{\delta+2}+\frac{C_b^2(\delta+2)^2}{4\beta}\|u_x(t)\|_{\L^2}^2,
	\end{align*}
	for some constants $C_a$ and $C_b$. Similar calculations hold for the quantities $u^2(0,t)$ and  $u^{\delta+2}(0,t)$. Therefore, \eqref{323} implies 
	\begin{align}
		&	\frac{d}{dt}[e^{\frac{\varrho}{2} t}\Psi(t)]+\nu e^{\frac{\varrho}{2} t}\|u_{xx}(t)\|_{\L^2}^2+\beta e^{\frac{\varrho}{2} t}\|u^{\delta}(t)u_x(t)\|_{\L^2}^2\nonumber\\&\leq C_{\varrho,\hat\eta_1,\hat\eta_2,\nu,\alpha,\beta,\gamma,\delta}e^{\frac{\varrho}{2} t}\left[\|u(t)\|_{\L^{\delta+2}}^{\delta+2}+\|u_x(t)\|_{\L^2}^2+\|u^{\delta}(t)u_x(t)\|_{\L^2}^2+\|u(t)\|_{\L^{2(\delta+1)}}^{2(\delta+1)}\right],
	\end{align}
	for a.e. $t\geq 0$. Integrating the above inequality from $0$ to $t$, we find 
	\begin{align}\label{325}
		e^{\frac{\varrho}{2} t}\Psi(t)&\leq \Psi(0)+C_{\varrho,\hat\eta_1,\hat\eta_2,\nu,\alpha,\beta,\gamma,\delta}\nonumber\\&\quad\times\int_0^te^{\frac{\varrho}{2} s}\left[\|u(s)\|_{\L^{\delta+2}}^{\delta+2}+\|u_x(s)\|_{\L^2}^2+\|u^{\delta}(s)u_x(s)\|_{\L^2}^2+\|u(s)\|_{\L^{2(\delta+1)}}^{2(\delta+1)}\right]d s,
	\end{align}
	for all $t\geq 0.$ 
	
	A calculation similar to \eqref{312} yields 
	\begin{align}
	&	\frac{1}{2p}\frac{d}{d t}\|u(t)\|_{\L^{2p}}^{2p}+\frac{(2p-1)\nu}{2}\|u^{p-1}(t)u_x^2(t)\|_{\L^2}^2+\frac{\beta}{2}\|u(t)\|_{\L^{2(\delta+p)}}^{2(\delta+p)}\nonumber\\&\leq -\left(\frac{\nu(2p-1)}{4p}+\beta\gamma-\frac{\beta(1+\gamma)^2}{2}\right)\|u(t)\|_{\L^{2p}}^{2p},
	\end{align}
	for a.e. $t\in[0,T]$. Therefore, for $\nu>\frac{2p}{2p-1}\beta(1+\gamma^2)$ and $\varrho=2p\left(\frac{\nu(2p-1)}{4p}-\frac{\beta(1+\gamma^2)}{2}\right)>0$, the above estimate immediately gives 
	\begin{align}\label{326}
		e^{\varrho t}\|u(t)\|_{\L^{2p}}^{2p}+p(2p-1)\nu\int_0^te^{\varrho s}\|u^{p-1}(s)u_x^2(s)\|_{\L^2}^2ds+p\beta\int_0^te^{\varrho s}\|u(s)\|_{\L^{2(\delta+p)}}^{2(\delta+p)}ds\leq \|u_0\|_{\L^{2p}}^{2p},
	\end{align}
	for all $t\geq 0.$ Choosing $p=\delta+1,1$, we immediately have 
	\begin{align*}
		&\|u(t)\|_{\L^{\delta+2}}\leq\|u(t)\|_{\L^{2(\delta+1)}}\leq e^{-\frac{\varrho t}{2(\delta+1)}}\|u_0\|_{\L^{2(\delta+1)}},\\
	&	\int_0^te^{\varrho s}\|u^{\delta}(s)u_x^2(s)\|_{\L^2}^2ds\leq C_{\nu,\delta}\|u_0\|_{\L^{2(\delta+1)}}^{2(\delta+1)},\\
	&	\int_0^te^{\varrho s}\|u(s)\|_{\L^{2(\delta+p)}}^{2(\delta+p)}ds\leq C_{\beta,\delta}\|u_0\|_{\L^{2(\delta+1)}}^{2(\delta+1)},\\
	&	\int_0^te^{\varrho s}\|u_x^2(s)\|_{\L^2}^2ds+\int_0^te^{\varrho s}\|u(s)\|_{\L^{2(\delta+1)}}^{2(\delta+1)}ds\leq C_{\nu,\beta,\delta}\|u_0\|_{\L^{2}}^{2}.
		\end{align*}
	Substituting the above inequalities in \eqref{325}, we deduce 
	\begin{align}
			\Psi(t)&\leq\left\{ \Psi(0)+C_{\varrho,\hat\eta_1,\hat\eta_2,\nu,\alpha,\beta,\gamma,\delta}\left(\|u_0\|_{\L^2}^2+\|u_0\|_{\L^{2(\delta+1)}}^{\delta+2}+\|u_0\|_{\L^{2(\delta+1)}}^{2(\delta+1)}\right)\right\}e^{-\frac{\varrho}{2} t},
	\end{align}
	for all $t\geq 0$. From the definition of $\Psi$ given in \eqref{3p23} and Sobolev's inequality, one can immediately deduce \eqref{3p17}. 
	\end{proof}
	
	\begin{remark}
		As we are not using any Gronwall's inequality to obtain \eqref{325}, we are able to establish global exponential stability in $\H^1$-norm. The author in \cite{ABMK} obtained a semi-global exponential stability only in $\H^1$-norm for the viscous Burgers equation. 
	\end{remark}
	Therefore an application of Agmon's inequality (Lemma \ref{lem21}) yields  the following result on the pointwise exponential convergence: 
	\begin{theorem}\label{thm3.10}
		Consider the problem \eqref{1} with boundary control \eqref{2p10} for $p=\delta+1$. If $w_0\in\H^1(0,1)$ and $\nu>\frac{2p}{2p-1}\beta(1+\gamma^2)$, 
		\begin{align}
		\max_{x\in[0,1]}|u(x,t)|\leq C\left(\varrho,\hat\eta_1,\hat\eta_2,\nu,\alpha,\beta,\gamma,\delta,\|(u_0)_x\|_{\L^2}\right)e^{-\frac{\varrho}{2} t}, 
	\end{align}
for all $t\geq 0$,	where $\varrho$ is defined in \eqref{3p18}. 
	\end{theorem}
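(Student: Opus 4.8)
The plan is to read Theorem~\ref{thm3.10} as an essentially immediate corollary of the $\H^1$-exponential stability already established in Theorem~\ref{thm3.8}, the only additional ingredient being Agmon's inequality (Lemma~\ref{lem21}). All of the genuine analytic work has by this point already been done: the delicate energy estimate \eqref{322}, the avoidance of any Gronwall argument in passing to \eqref{325}, and the decoupled $\L^{2p}$-bounds \eqref{326} are precisely what produce the $\H^1$-decay \eqref{3p17}. What remains here is only to transfer an $\H^1$-norm bound into a pointwise (sup-norm) bound, so I expect \emph{no substantial obstacle} in this final step.

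Concretely, first I would invoke Theorem~\ref{thm3.8}: for $p=\delta+1$ and $\nu>\frac{2p}{2p-1}\beta(1+\gamma^2)$, with $u_0\in\H^1(0,1)$, the solution satisfies
\begin{align*}
	\|u(t)\|_{\H^1}^2\leq C\big(\varrho,\hat\eta_1,\hat\eta_2,\nu,\alpha,\beta,\gamma,\delta,\|(u_0)_x\|_{\L^2}\big)e^{-\frac{\varrho}{2}t},\qquad t\geq 0,
\end{align*}
with $\varrho$ as in \eqref{3p18}. Next I would apply Agmon's inequality \eqref{21} to $w=u(\cdot,t)$ together with the trivial bound $\|u(t)\|_{\L^2}\leq\|u(t)\|_{\H^1}$ to obtain
\begin{align*}
	\max_{x\in[0,1]}|u(x,t)|^2\leq C_a^2\|u(t)\|_{\L^2}\|u(t)\|_{\H^1}\leq C_a^2\|u(t)\|_{\H^1}^2.
\end{align*}
Combining the two displays transfers the exponential $\H^1$-decay directly to the sup-norm, which yields the asserted pointwise exponential stability with the constant inherited from Theorem~\ref{thm3.8}; the exponent in the pointwise estimate is thus determined entirely by Agmon's square-root interpolation applied to the $e^{-\frac{\varrho}{2}t}$-decay of $\|u(t)\|_{\H^1}^2$.

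One point genuinely deserves attention, and it is the reason the conclusion is stated with $\max$ rather than $\esssup$. Because $u_0\in\H^1(0,1)$ and $\H^1(0,1)\hookrightarrow\C([0,1])$ (Remark~\ref{rem2.2}), the function $u(\cdot,t)$ is continuous on $[0,1]$ for each fixed $t$, so the maximum is genuinely attained pointwise rather than merely as an essential supremum. This is precisely the gap flagged in the remark following Theorem~\ref{thm3.6}: the bare $\L^\infty$-contraction there controlled only the essential supremum and did not yield decay, whereas the continuous embedding of $\H^1$ into $\C([0,1])$, combined with the quantitative $\H^1$-decay, upgrades the bound to a pointwise exponentially decaying one. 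Apart from this regularity remark, the proof is a one-line consequence of Theorem~\ref{thm3.8} and Lemma~\ref{lem21}.
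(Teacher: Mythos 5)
Your proposal is correct and is precisely the paper's own argument: the paper presents Theorem \ref{thm3.10} as an immediate application of Agmon's inequality (Lemma \ref{lem21}) to the $\H^1$-decay of Theorem \ref{thm3.8}, exactly as you do, and your observation that $\H^1(0,1)\hookrightarrow\C([0,1])$ (Remark \ref{rem2.2}) is what justifies writing $\max$ rather than $\esssup$ is also the paper's implicit reasoning. One bookkeeping point, inherited from the paper's own statement rather than introduced by you: since Theorem \ref{thm3.8} bounds $\|u(t)\|_{\H^1}^2$ (not $\|u(t)\|_{\H^1}$) by $Ce^{-\varrho t/2}$, taking square roots in your final display literally yields $\max_{x\in[0,1]}|u(x,t)|\leq C'e^{-\varrho t/4}$, so the stated rate $e^{-\varrho t/2}$ holds for $\max_{x\in[0,1]}|u(x,t)|^2$ rather than for $\max_{x\in[0,1]}|u(x,t)|$ itself.
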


	\subsection{Adaptive control} In this subsection, we consider the adaptive control problem of the following  damped gneralized Burgers-Huxley equation subject to \eqref{2} (that is, when $\nu>0$ is unknown):
	\begin{align}\label{1d}
		u_t(x,t)=\nu u_{xx}(x,t)-\kappa u(x,t)-\alpha u^{\delta}(x,t)u_x(x,t)+\beta u(x,t)(1-u^{\delta}(x,t))(u^{\delta}(x,t)-\gamma), 
	\end{align}
	where $0<\kappa\leq \frac{\beta}{4}(1-\gamma)^2$ is known. 
	\begin{theorem}\label{thm2.5}
		The solution $u(\cdot)$ of the closed-loop system of the damped GBH equation \eqref{1d} subject to \eqref{2} with unknown viscosity $\nu>0$, is asymptotically stable under the following control law:
		\begin{equation}\label{29}
			\left\{
			\begin{aligned}
				w_1(t)&=\eta_1(t)u^{\delta+1}(0,t)+u(0,t),\\
				w_2(t)&=\eta_2(t)u^{\delta+1}(1,t),
			\end{aligned}
			\right.
		\end{equation}
		where $\eta_k(t)$, $k=1,2,$ are bounded for any $t\geq 0$ with: 
		\begin{equation}\label{eq4}
			\left\{
			\begin{aligned}
				\dot{\eta}_1(t)&=r_1u^{\delta+2}(0,t),\ r_1>0,\\
				\dot{\eta}_2(t)&=-r_2u^{\delta+2}(1,t),\ r_2>0. 
			\end{aligned}
			\right.
		\end{equation}
	\end{theorem}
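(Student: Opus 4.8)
The plan is to run a Lyapunov argument in which the ordinary $\L^2$-energy is augmented by quadratic penalties on the \emph{parameter estimation errors}, and then to pass from integrability to decay by a Barbalat-type argument (cf. Lemma \ref{lem1}). Since $\nu$ is unknown, the feedback \eqref{29}--\eqref{eq4} must not contain it, but the analysis may treat $\nu$ as a fixed (unknown) positive constant. Guided by the non-adaptive gains in \eqref{eq6}, I introduce the \emph{ideal gains} $\eta_1^{*}=\eta_2^{*}=\frac{\alpha}{\nu(\delta+2)}$ and consider
\begin{align*}
	\mathcal{V}(t)=\frac12\|u(t)\|_{\L^2}^2+\frac{\nu}{2r_1}\big(\eta_1(t)-\eta_1^{*}\big)^2+\frac{\nu}{2r_2}\big(\eta_2(t)-\eta_2^{*}\big)^2.
\end{align*}
Only the state part of $\mathcal{V}$ is accessible to the controller; the two correction terms are analytical devices designed to absorb the boundary contributions generated by the mismatch between the current gains $\eta_i(t)$ and the unknown ideal ones.

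Next I would differentiate $\mathcal{V}$ along \eqref{1d}. Repeating the integration by parts already carried out in \eqref{eq1} (now with the extra damping $-\kappa\|u\|_{\L^2}^2$) and inserting $u_x(0,t)=w_1$, $u_x(1,t)=w_2$ from \eqref{29}, the diffusion and advection boundary contributions collapse to
\begin{align*}
	&-\nu\|u_x(t)\|_{\L^2}^2-\nu u^2(0,t)\\
	&\quad+\Big(\nu\eta_2(t)-\tfrac{\alpha}{\delta+2}\Big)u^{\delta+2}(1,t)-\Big(\nu\eta_1(t)-\tfrac{\alpha}{\delta+2}\Big)u^{\delta+2}(0,t),
\end{align*}
plus the reaction contribution $\int_0^1 R(u)\,dx-\kappa\|u\|_{\L^2}^2$, where $R(u)=-\beta\gamma u^2+\beta(1+\gamma)u^{\delta+2}-\beta u^{2\delta+2}$. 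The decisive observation is that, by the very definition of $\eta_i^{*}$, one has $\nu\eta_i(t)-\frac{\alpha}{\delta+2}=\nu(\eta_i(t)-\eta_i^{*})$, so the boundary monomials are \emph{exactly matched} by the derivatives of the correction terms once \eqref{eq4} is used: indeed $\frac{\nu}{r_1}(\eta_1-\eta_1^{*})\dot\eta_1=\nu(\eta_1-\eta_1^{*})u^{\delta+2}(0,t)$ and $\frac{\nu}{r_2}(\eta_2-\eta_2^{*})\dot\eta_2=-\nu(\eta_2-\eta_2^{*})u^{\delta+2}(1,t)$ cancel the two boundary terms identically, leaving
\begin{align*}
	\dot{\mathcal{V}}(t)=-\nu\|u_x(t)\|_{\L^2}^2-\nu u^2(0,t)+\int_0^1 R(u)\,dx-\kappa\|u\|_{\L^2}^2.
\end{align*}

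It then remains to control the reaction term by the damping. Estimating $\beta(1+\gamma)\int_0^1 u^{\delta+2}\,dx\le\frac{\beta(1+\gamma)^2}{4}\|u\|_{\L^2}^2+\beta\|u\|_{\L^{2(\delta+1)}}^{2(\delta+1)}$ by Young's inequality (the choice $\theta=1$ in \eqref{eq1}) yields $\int_0^1 R(u)\,dx\le\frac{\beta}{4}(1-\gamma)^2\|u\|_{\L^2}^2$, so that the damping renders $\int_0^1 R(u)\,dx-\kappa\|u\|_{\L^2}^2$ nonpositive, which is precisely the role of the standing assumption on $\kappa$ relative to $\frac{\beta}{4}(1-\gamma)^2$. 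Consequently $\dot{\mathcal{V}}(t)\le-\nu\|u_x(t)\|_{\L^2}^2-\nu u^2(0,t)\le 0$, so $\mathcal{V}$ is nonincreasing and bounded by $\mathcal{V}(0)$. In particular $\|u(t)\|_{\L^2}$ stays bounded and each gain is bounded, since $\frac{\nu}{2r_i}(\eta_i(t)-\eta_i^{*})^2\le\mathcal{V}(0)$, which already yields the boundedness of $\eta_1,\eta_2$ asserted in the statement. Integrating in time gives $\nu\int_0^\infty\big(\|u_x(s)\|_{\L^2}^2+u^2(0,s)\big)\,ds\le\mathcal{V}(0)<\infty$, and Poincar\'e's inequality (Lemma \ref{lem3.1}) upgrades this to $\int_0^\infty\|u(s)\|_{\L^2}^2\,ds<\infty$.

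Finally I would invoke a Barbalat-type argument to pass from integrability to decay: writing $W(t)=\|u(t)\|_{\L^2}^2$, it suffices that $W$ be uniformly continuous, i.e. that $\dot W=2\int_0^1 u\,u_t\,dx$ be bounded on $[0,\infty)$. This is where I expect the genuine obstacle to lie, and it is resolved through the a priori regularity furnished by the well-posedness theory of Section \ref{sec2}: the damped system \eqref{1d} admits the same strong-solution bounds as Theorem \ref{thm2.6}, since the additional term $-\kappa u$ corresponds to a monotone perturbation $+\kappa v$ of the operator in Lemma \ref{lem2.1} and only strengthens the monotonicity and coercivity estimates. Hence $\|u(t)\|_{\H^1}$, and therefore $\int_0^1 u\,u_t\,dx$, remain bounded uniformly in $t$, and Barbalat's lemma forces $W(t)=\|u(t)\|_{\L^2}^2\to 0$ as $t\to\infty$, which is the claimed asymptotic stability. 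The energy identity itself presents no difficulty, as its boundary terms cancel by design; the real work is the uniform-in-time control of the higher-order norm needed for Barbalat, and I note that no explicit decay rate can be expected here precisely because any such rate would depend on the unknown viscosity $\nu$.
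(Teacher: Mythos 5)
Your first stage coincides with the paper's own argument: your $\mathcal{V}$ is exactly the paper's energy $\mathcal{E}$, since $\frac{\nu}{2r_i}\big(\eta_i(t)-\eta_i^{*}\big)^2=\frac{1}{2\nu r_i}\big(\nu\eta_i(t)-\frac{\alpha}{\delta+2}\big)^2$; the exact cancellation of the boundary monomials against the adaptation law \eqref{eq4} is the same computation as in the paper, and the conclusions you draw from $\dot{\mathcal{V}}\le 0$ (boundedness of the gains, integrability of $\|u_x(\cdot)\|_{\L^2}^2+u^2(0,\cdot)$ on $(0,\infty)$, hence of $\|u(\cdot)\|_{\L^2}^2$ by Lemma \ref{lem3.1}) are sound. (Both you and the paper in fact need $\kappa\ge\frac{\beta}{4}(1-\gamma)^2$ for the damping to absorb the reaction term --- this is what your Young-inequality step uses --- even though the statement of the hypothesis in the paper carries the opposite inequality sign.)

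The gap lies in your final step. Barbalat requires uniform continuity of $W(t)=\|u(t)\|_{\L^2}^2$, which you derive from a claimed uniform-in-time bound on $\|u(t)\|_{\H^1}$, attributed to the well-posedness theory of Section \ref{sec2}. That theory supplies no such bound: Theorem \ref{thm2.6} only guarantees that $\|u(t)\|_{\H^1}$ and $\|u(t)\|_{\H^2}$ do not blow up in finite time, and the remark following it states explicitly that no bounds for these norms are provided. Moreover, the Crandall--Liggett framework there is built for time-independent nondecreasing feedback $g_0,g_1$, whereas the adaptive closed loop has time-varying gains $\eta_i(t)$, so even the quasi-contraction estimate (with its $e^{\tilde{\omega}t}$ factor) cannot be invoked verbatim. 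Uniform $\H^1$ bounds are obtained in this paper only in Theorem \ref{thm3.8}, under a largeness condition on the \emph{known} viscosity --- precisely what is unavailable in the adaptive setting. The paper's proof avoids the obstacle altogether: instead of cancelling everything into $\dot{\mathcal{V}}\le 0$ and then needing regularity, it retains the strict decay term, writing $\dot{V}(t)\le-\zeta V(t)-\big(\nu\eta_1(t)-\frac{\alpha}{\delta+2}\big)u^{\delta+2}(0,t)+\big(\nu\eta_2(t)-\frac{\alpha}{\delta+2}\big)u^{\delta+2}(1,t)$, uses stage one only to conclude that the gain errors are bounded and the boundary traces lie in $\L^{\delta+2}(0,\infty)$, and then concludes $V(t)\to 0$ by the variation of constants formula together with the convolution lemma (Lemma \ref{lem1}) --- no norm higher than $\L^2$ is ever needed. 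If you wish to stay close to your own argument, note that since $W\ge 0$ and $W\in\L^1(0,\infty)$, a one-sided bound $\dot{W}\le C$ would already suffice in place of uniform continuity; but even that requires uniform control of the boundary traces and of the reaction integral, i.e. an $\L^\infty$ bound you likewise do not possess, so the variation-of-constants route is the correct repair.
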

	\begin{proof}
		Let $\mathcal{E}(t)$ be a non-negative energy function defined by 
		\begin{align}
			\mathcal{E}(t)=V(t)+\left(\frac{1}{2\nu r_1}\right)\left(\nu\eta_1(t)-\frac{\alpha}{\delta+2}\right)^2+\left(\frac{1}{2\nu r_2}\right)\left(\nu\eta_2(t)-\frac{\alpha}{\delta+2}\right)^2,
		\end{align}
		where $V(t)$ is defined in \eqref{eq2}. Taking the time derivative of the energy function $\mathcal{E}(t)$, we get 
		\begin{align}\label{eq5}
			\dot{\mathcal{E}}(t)&=\dot{V}(t)+\frac{1}{r_1}\left(\nu\eta_1(t)-\frac{\alpha}{\delta+2}\right)\dot{\eta}_1(t)+\frac{1}{r_2}\left(\nu\eta_2(t)-\frac{\alpha}{\delta+2}\right)\dot{\eta}_2(t)\nonumber\\&\leq -\zeta V(t)-\left(\nu\eta_1(t)-\frac{\alpha}{\delta+2}\right)u^{\delta+2}(0,t)+\left(\nu\eta_2(t)-\frac{\alpha}{\delta+2}\right)u^{\delta+2}(1,t)\nonumber\\&\quad+\frac{1}{r_1}\left(\nu\eta_1(t)-\frac{\alpha}{\delta+2}\right)\dot{\eta}_1(t)+\frac{1}{r_2}\left(\nu\eta_2(t)-\frac{\alpha}{\delta+2}\right)\dot{\eta}_2(t),
		\end{align}
		where we have used \eqref{eq1} with $\left(\frac{\nu}{2}+\beta\gamma-\frac{\beta(1+\gamma)^2}{4\theta}\right)$ replaced by $\left(\frac{\nu}{2}+\kappa+\beta\gamma-\frac{\beta(1+\gamma)^2}{4\theta}\right)$ and $
			\zeta=\frac{\nu}{2}+\kappa+\beta\gamma-\frac{\beta(1+\gamma)^2}{4\theta}>0,$ since $\kappa\leq \frac{\beta}{4}(1-\gamma)^2$.  Substituting \eqref{eq4} in \eqref{eq5}, we obtain 
		\begin{align}
			\dot{\mathcal{E}}(t)\leq -\zeta V(t)\leq 0, 
		\end{align}
		since $\zeta>0$. Therefore, we have $\mathcal{E}(t)\leq \mathcal{E}(0)$, and  it is immediate that $\eta_1(t)$ and $\eta_2(t)$ are bounded functions for any $t>0$. This also implies $u(0,t),u(1,t)\in  \L^{\delta+2}(0,\infty)$. 
		
		Let now show the asymptotic stability result. From \eqref{eq5}, we have 
		\begin{align}
			\dot{V}(t)\leq -\zeta V(t)-\nu\left(\eta_1(t)-\frac{\alpha}{\nu(\delta+2)}\right)u^{\delta+2}(0,t)+\nu\left(\eta_2(t)-\frac{\alpha}{\nu(\delta+2)}\right)u^{\delta+2}(1,t).
		\end{align}
		An application of variation of constants formula yields 
		\begin{align}
			V(t)&\leq V(0)e^{-\zeta t}\nonumber\\&\quad+\nu\int_0^t\left\{\left(\frac{\alpha}{\nu(\delta+2)}-\eta_1(s)\right)u^{\delta+2}(0,s)+\left(\eta_2(s)-\frac{\alpha}{\nu(\delta+2)}\right)u^{\delta+2}(1,s)\right\}e^{-\zeta(t-s)}ds\nonumber\\&\leq V(0)e^{-\zeta t}+\nu\xi\left(\int_0^te^{-\zeta(t-s)}|u^{\delta+2}(0,s)|ds+\int_0^te^{-\zeta(t-s)}|u^{\delta+2}(1,s)|ds\right),
		\end{align}
		where $$\xi=\max\left\{\sup_{s\in[0,\infty)}\left|\frac{\alpha}{\nu(\delta+2)}-\eta_1(s)\right|,\sup_{s\in[0,\infty)}\left|\eta_2(s)-\frac{\alpha}{\nu(\delta+2)}\right|\right\}.$$ 
		Exploiting Lemma \ref{lem1}, one can easily obtain the required result. 
	\end{proof}
	
	\begin{remark}
		Following the methodology adopted in \cite{KIYY}, we can consider a general conservation form of \eqref{1} and obtain boundary feedback control results, since we have already established the existence and uniqueness of strong solutions of the system \eqref{1} with $\frac{1}{\delta+1}v^{\delta+1}$ replaced by $f(v)$ for $f\in\C^1(\R)$ (Theorem \ref{thm2.6}).
	\end{remark}

	\section{Numerical results}\label{sec4}\setcounter{equation}{0}  
	
In our numerical study, we initially discretize the given equation with the help of Chebyshev nodes within the interval \([0,1]\) using a transformation from the standard Chebyshev nodes on \([-1, 1]\). For our analysis, we select $50$ nodes. A MATLAB program employing the Chebyshev collocation method for spatial discretization and the backward Euler method for time discretization has been developed. It solves both the uncontrolled and controlled versions of the Generalized Burgers-Huxley (GBH) equation \eqref{1}, as well as the adaptive control of the unforced damped GBH equation \eqref{1d}, subject to conditions outlined in equations \eqref{29} and \eqref{eq4}.
\begin{figure}[h]
	\centerline{\includegraphics[width=16cm]{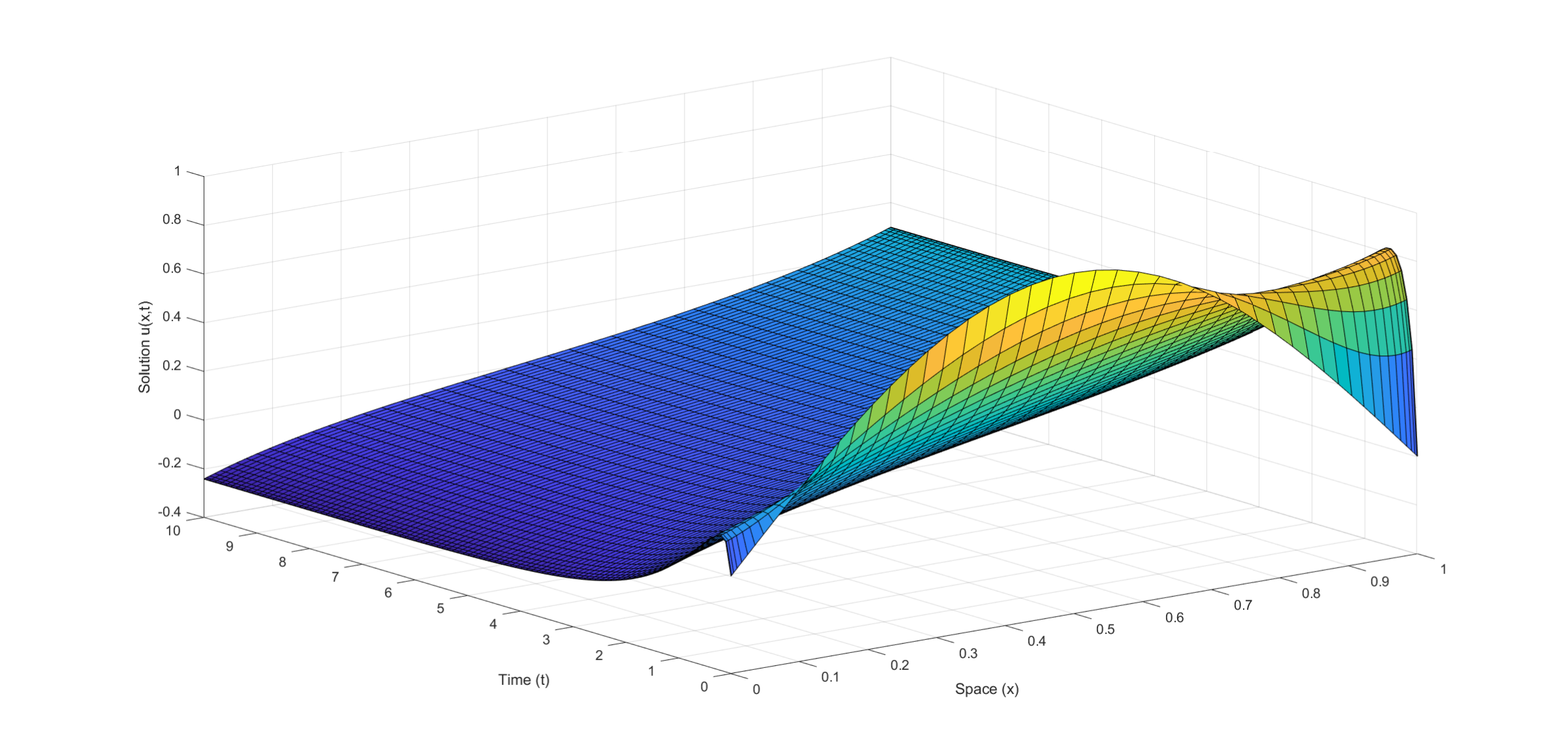}}
	\caption{Time evolution of uncontrolled GBH equation for $\nu = 0.1, \alpha=1,\beta=1,\delta=1,\gamma=0.5$  and  $u_{0}(x) = \sin(\pi x)$.}
	\label{Fig1}
\end{figure}

Figure \ref{Fig1} illustrates the evolution of the solution \( u(x,t) \) over time for the uncontrolled case, with parameters \(\alpha=1\), \(\beta=1\), \(\gamma=0.5\), \(\delta = 1\), \(\nu =0.1 \), and the initial condition \( u(x,0) = \sin(\pi x) \). When a non-adaptive control law, as specified in the equation \eqref{eq6} is applied with \(\eta_{1} =1\) and \(\eta_{2} =1\), Figure \ref{fig2} demonstrates that \( u(x,t) \) approaches the desired zero state. This observation validates Theorem \ref{thm2.3}. 

\begin{figure}[h]
	\centerline{\includegraphics[width=16cm]{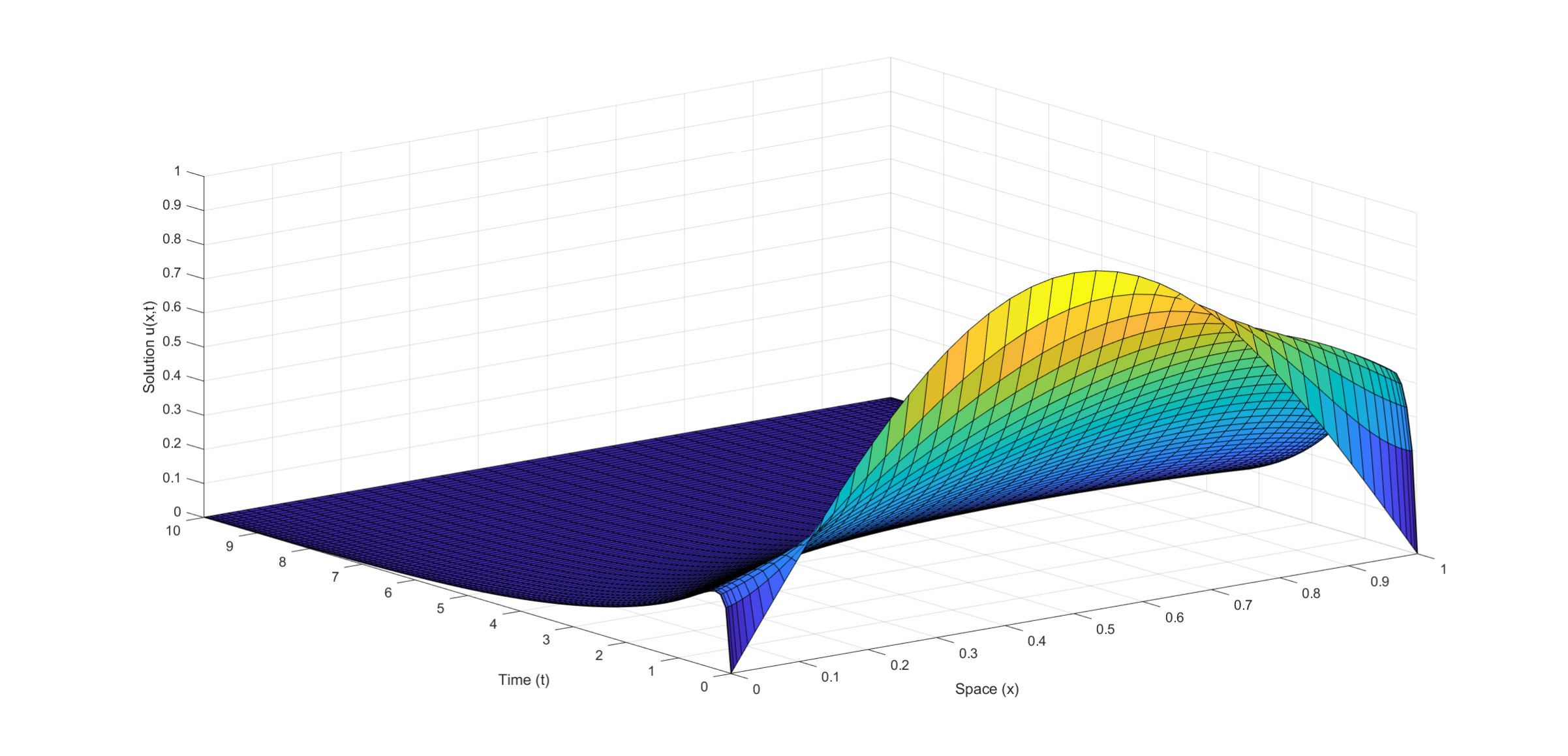}}
	\caption{Time evolution of controlled GBH equation for $\nu = 0.1, \alpha=1,\beta=1,\delta=1,\gamma=0.5,\eta_{1} = 1,\eta_{2} =1 $ and $u_{0}(x) = \sin(\pi x)$.}
	\label{fig2}
\end{figure}
\begin{figure}[h]
	\centerline{\includegraphics[width=16cm]{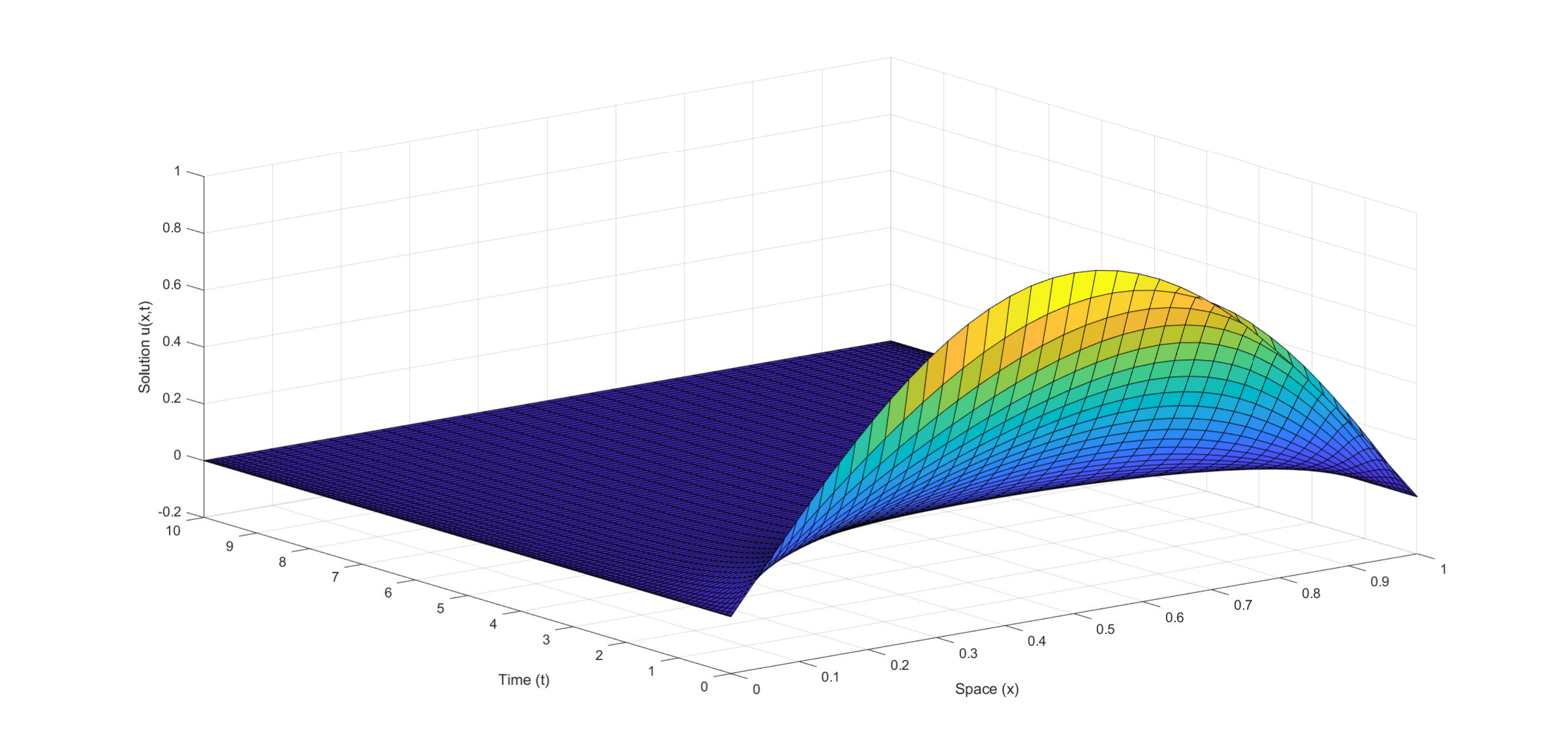}}
	\caption{Time evolution of adaptive controlled damped unforced GBH equation for $\nu = 0.1, \alpha=1,\beta=1,\delta=1,\gamma=0.5,\kappa = 0.05 $ and $u_{0}(x) =\ sin(\pi x)$.}
	\label{fig3}
\end{figure}

Figure \ref{fig3} presents the time evolution of the solution to damped GBH equation under the application of adaptive control with \(\kappa=0.05\). The numerical results correspond to this setting, clearly shows the decay of the solution to zero state, in agreement with the analytical results proved in Theorem \ref{thm2.5}.

Figures \ref{fig4}, \ref{fig5}, and \ref{fig6} further verify the exponential stability for both the non-adaptive and adaptive cases in comparison with the uncontrolled equation. These figures are presented under the parameters \(\alpha =2\), \(\beta =2\), and \(\delta = 3\), with all other conditions remains unchanged.

\begin{figure}[h]
	\centerline{\includegraphics[width=16cm]{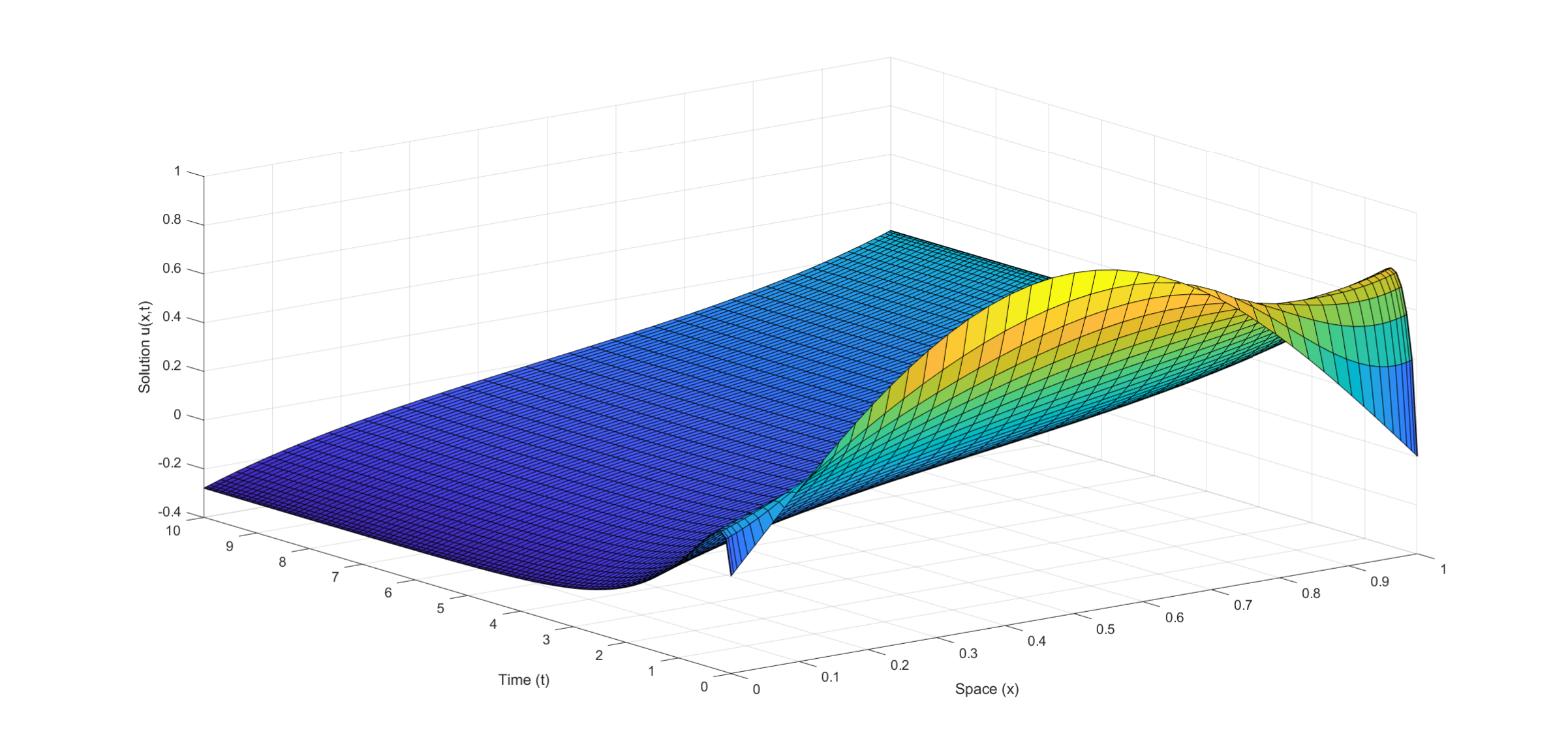}}
	\caption{Time evolution of uncontrolled GBH equation for $\nu = 0.1, \alpha=2,\beta=2,\delta=3,\gamma=0.5$  and $ u_{0}(x) = \sin(\pi x)$}
	\label{fig4}
\end{figure}
\begin{figure}[h]
	\centerline{\includegraphics[width=16cm]{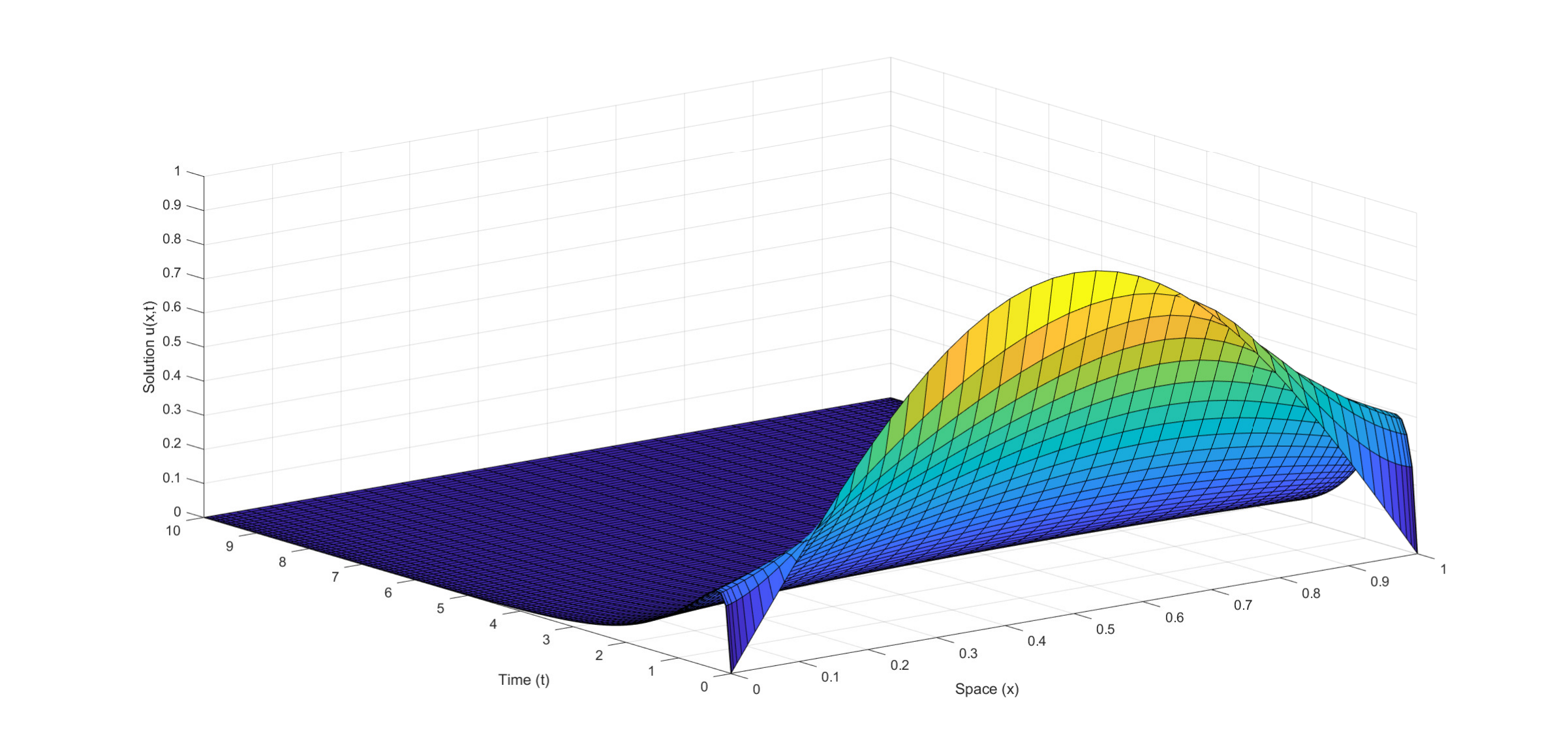}}
	\caption{Time evolution of controlled GBH equation for $\nu = 0.1, \alpha=2,\beta=2,\delta=3,\gamma=0.5,\eta_{1} = 1,\eta_{2} =1$ and $u_{0}(x) = \sin(\pi x)$.}
	\label{fig5}
\end{figure}
\begin{figure}[h]
	\centerline{\includegraphics[width=16cm]{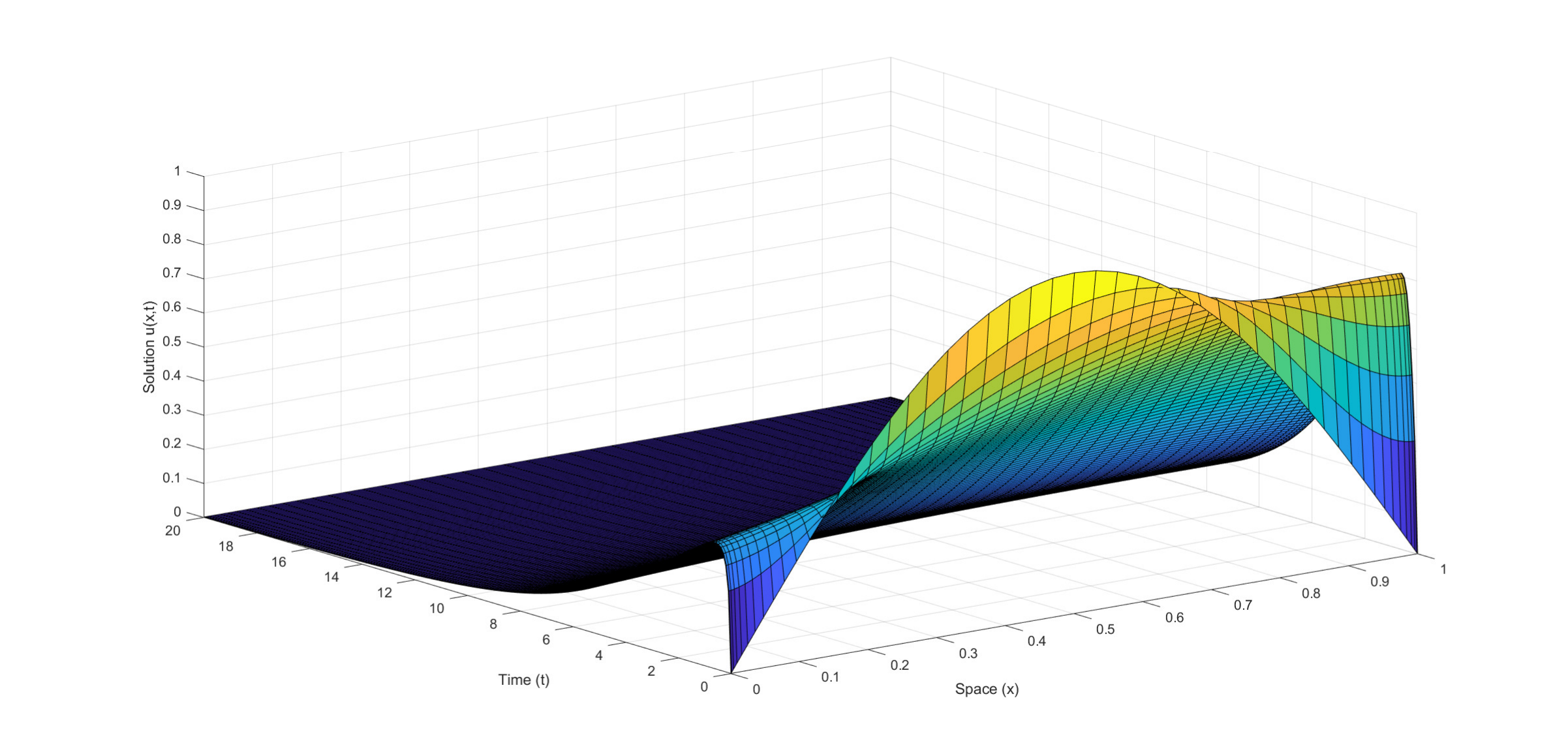}}
	\caption{Time evolution of adaptive controlled damped unforced GBH equation for $\nu = 0.1, \alpha=2,\beta=2,\delta=3,\gamma=0.5,\kappa = 0.05$ and $u_{0}(x) = \sin(\pi x).$}
	\label{fig6}
\end{figure}

	\end{document}